\definecolor{mygray}{gray}{.9}
\definecolor{mypink}{rgb}{.99,.91,.95}
\definecolor{mycyan}{cmyk}{.3,0,0,0}
\newtheorem{remark}{Remark}[section]
\def\S{{\mathfrak s}}
\def\T{{\mathcal T}}
\def\E{{\mathcal E}}
\def\bn{{\bf n}}
\def\bv{{\bf v}}
\def\bu{{\bf u}}
\def\pT{{\partial T}}
\def\3bar{{|\!|\!|}}
\newtheorem{FD-algorithm}{5-Point Finite Difference Algorithm}[section]
\def\bmsigma{\bm{\sigma}}
\def\mbbQ{\mathbb{Q}}
\newcommand{\vertiii}[1]{{\left\vert\kern-0.25ex\left\vert\kern-0.25ex\left\vert #1
    \right\vert\kern-0.25ex\right\vert\kern-0.25ex\right\vert}}
\title{A Locking-Free $P_0$ Finite Element Method for Linear Elasticity Equations on Polytopal Partitions}
\author{Yujie Liu\thanks{Center for Quantum Computing, Peng Cheng Laboratory, Shenzhen 518005, China; Center for Mathematical Sciences, Huazhong University of Science \& Technology, Wuhan, China; (liuyj02@pcl.ac.cn). The research of Liu was partially supported by Guangdong Provincial Natural Science Foundation (No. 2017A030310285), Shandong Provincial natural Science Foundation (No. ZR2016AB15) and Youthful Teacher Foster Plan Of Sun Yat-Sen University (No. 171gpy118),} \and Junping Wang
\thanks{Division of Mathematical Sciences, National Science Foundation, Alexandria, VA 22314 (jwang@nsf.gov). The research of Wang was supported by the NSF IR/D program, while working at National Science Foundation. However, any opinion, finding, and conclusions or recommendations expressed in this material are those of the author and do not necessarily reflect the views of the National Science Foundation.}}
\begin{document}

\maketitle
\begin{abstract}
This article presents a $P_0$ finite element method for boundary value problems for linear elasticity equations. The new method makes use of piecewise constant approximating functions on the boundary of each polytopal element, and is devised by simplifying and modifying the weak Galerkin finite element method based on $P_1/P_0$ approximations for the displacement. This new scheme includes a tangential stability term on top of the simplified weak Galerkin to ensure the necessary stability due to the rigid motion. The new method involves a small number of unknowns on each element; it is user-friendly in computer implementation; and the element stiffness matrix can be easily computed for general polytopal elements. The numerical method is of second order accurate, locking-free in the nearly incompressible limit, ease polytopal partitions in practical computation. Error estimates in $H^1$, $L^2$, and some negative norms are established for the corresponding numerical displacement.  Numerical results are reported for several 2D and 3D test problems, including the classical benchmark Cook's membrane problem in two dimensions as well as some three dimensional problems involving shear loaded phenomenon. The numerical results show clearly the simplicity, stability, accuracy, and the efficiency of the new method.
\end{abstract}

\begin{keywords} Linear elasticity, simplified WG, weak Galerkin, error estimate, polytopal partitions.
\end{keywords}

\begin{AMS}
Primary, 65N30, 65N15; Secondary, 35J50
\end{AMS}

\pagestyle{myheadings}

%

\section{Introduction}
In this paper, we are concerned with the development of new and robust finite element methods for boundary value problems of linear elasticity equations with general polytopal finite element partitions. The kinematic model of linear elasticity seeks a displacement vector field $\bm{u}$ satisfying
\begin{eqnarray}
-\nabla\cdot \bmsigma (\bm{u}) &=& \bm{f}\quad {\rm in}\  \Omega,  \label{elasbdy}\\
\bm{u}&=&\bm{g}\quad {\rm on}\ \Gamma_D, \label{elasbcd}\\
\bm{\sigma} \bm{n}&=&\bm{\varrho}\quad {\rm on}\ \Gamma_N, \label{elasbcn}
\end{eqnarray}
where $\Omega$ is a bounded polytopal domain in $\mathbb{R}^d \;(d\ge 2)$ with boundary $\Gamma=\partial\Omega=\Gamma_D \bigcup \Gamma_N$. Assume the Dirichlet boundary $\Gamma_D$ has positive measure in $\mathbb{R}^{d-1}$ and $\Gamma_N=\Gamma/\Gamma_D$. $\bm{\sigma}(\bm{u})$ is the symmetric Cauchy stress tensor. For linear, homogenous, and isotropic materials, the Cauchy stress tensor is given by
\begin{eqnarray}
\bm{\sigma} (\bm{u})=2\mu \varepsilon(\bm{u}) +\lambda(\nabla \cdot \bm{u}) \text{\uppercase\expandafter{\romannumeral1}},
\end{eqnarray}
where $\varepsilon(\bm{u})=\frac{1}{2}(\nabla \bm{u} + \nabla \bm{u}^T)$ is the linear strain tensor, $\mu$ and $\lambda$ are the Lam\'e constants. For linear plane strain, the Lam\'e constants are given by
\begin{eqnarray}
\lambda =\frac{E \nu}{(1+\nu)(1-2\nu)},\quad \mu= \frac{E }{2(1+\nu)},
\end{eqnarray}
where $E$ is Young's modulus and $\nu$ is Poisson ratio.

For the linear elasticity problem, numerical methods may experience ``locking" in that when the Poisson ratio $\nu$ is close to $\frac{1}{2}$ (i.e., when the material is nearly incompressible), poor convergence or no convergence in the displacements may be observed for the corresponding numerical solutions. Locking occurs because for the limit case of $\nu=\frac12$, the exact displacement of the elasticity problem must
satisfy the equation $\nabla\cdot\bm{u}=0$. A common approach of designing locking-free numerical schemes involves the {\it inf-sup} condition of Babu\u{s}ka \cite{babuska} and Brezzi \cite{brezzi} for the approximating functions for a reformulated elasticity problem which seeks $\bm{u}\in[H^1(\Omega)]^d$ and $p\in L^2(\Omega)$ such that $\bm{u}=\bm{g}$ on $\Gamma_D$ and $(2\mu\varepsilon(\bm{u})+p I) \bm{n}=\bm{\varrho}$ on $\Gamma_N$, and satisfying
\begin{eqnarray}
-\nabla\cdot(2\mu\varepsilon(\bm{u})) - \nabla p &=&\bm{f}, \quad \text{ in } \Omega, \label{eq:general_stokes.1}\\
\nabla\cdot\bm{u}-\lambda^{-1}p &=&0,\quad \text{ in } \Omega. \label{eq:general_stokes.2}
\end{eqnarray}
The system \eqref{eq:general_stokes.1}-\eqref{eq:general_stokes.2} is a generalized Stokes problem (see
\cite{Brezzi-Fortin, Girault, abd1984} and the references cited
therein) in which $p=\lambda \nabla \cdot \bm{u}$ acts as a pseudo-pressure. It is clear that any finite elements that are stable for the Stokes problem would provide a locking-free scheme for the linear elasticity problem, but at the cost of solving a saddle-point problem with an additional variable $p$.

``Locking" is a subtle issue when conforming finite elements are employed to approximate the linear elasticity equation. In 1983, no locking was shown to be possible for the $p$-version of the finite element method on smooth domains, see Vogelius \cite{vogelius}. Scott and Vogelius \cite{scott-vogelius} showed that locking is absence for polynomials of degree $k\ge 4$ on triangular partitions. On the other hand, it was shown in \cite{babuska-suri} by Babu\v{s}ka and Suri that, for conforming finite element methods, locking cannot be avoided on quadrilateral meshes for any polynomial of degree $k\ge 1$.

In discontinuous finite element methods, it was shown in \cite{larson} by Hansbo and Larson that the numerical solutions from a
discontinuous Galerkin is locking-free for any values of
$k\ge 1$, see also \cite{Wihler} for the case of $k=1$. 
Di Pietro and Nicaise \cite{Daniele.01} devised a locking-free discontinuous Galerkin scheme for composite materials featuring quasi-incompressible and compressible sections. In \cite{Ferdinando_2005}, a Mixed-Enhanced Strain finite element method was developed within the context of the mixed formulation (in terms of $\bm{u}$ and $p$) for nearly incompressible linear elasticity problems. In \cite{Gain_2014}, a virtual element method was developed for three-dimensional linear elasticity problems on arbitrary polyhedral meshes. In \cite{FuCockburn_2015,Soon_Cockburn_2009}, linear elasticity equations were studied by using HDG methods. In \cite{WangWang_2016}, the authors developed a locking-free weak Galerkin finite element method for the linear elasticity equation by using $P_k/P_\ell$ elements (i.e., $P_k$ on each element and $P_\ell$ on the element boundary) where $\ell=k-1$ or $k$ for $k>1$ and $\ell=k$ for $k=1$. In \cite{Sevilla_linearelastostatic_2019}, a locking-free face-centred finite volume method was proposed for linear elastostatics based on a mixed hybrid formulation as a system of first-order equations. In the realm of discontinuous finite element methods, ``locking" is no longer a difficult issue to resolve for linear elasticity equations.

The main objective of this paper is to push the boundaries of the
weak Galerkin finite element method \cite{WangWang_2016} when the $P_1/P_0$ element is employed for the displacement variable. This is a case that was not included in the stability and convergence theory developed in \cite{WangWang_2016}, as one of the key assumptions targeted for the rigid motion is not satisfied by the $P_1/P_0$ element. But our extensive numerical experiments suggest that the $P_1/P_0$ element has a superb convergence performance on {\em true} polygonal or polyhedral elements (i.e., non-triangular elements in 2D or non-tetrahedral elements in 3D). The goal of this study is to establish a mathematical theory for the $P_1/P_0$ weak Galerkin finite element method for the linear elasticity problem \eqref{elasbdy}-\eqref{elasbcn}.

The main steps behind this study are as follows. First of all, we shall derive a simplified weak Galerkin method by eliminating the unknowns associated with the interior of each element, which shall be accomplished by following the framework developed in \cite{LiDanWW, LiuWang_SWG} for the second order elliptic and the Stokes equations. The resulting numerical scheme involves only piecewise constant functions on the element boundaries. This simplified WG method is further stabilized by using a `tangential derivative' term (see \eqref{eq:stabilizer} for details) so that the modified scheme becomes to be stable and accurate on arbitrary polytopal partitions.

Our main contribution of this work is on the development of a stable and locking-free finite element method by using very minimal number of unknowns for the displacement variable. The number of unknowns is given by the number of edges or faces in the corresponding polygonal (2D) or polyhedral (3D) finite element partitions. The numerical displacement is shown to be convergent at the order of $\mathcal{O}(h^2)$ in $L^2$ and $\mathcal{O}(h)$ in $H^1$. This mathematical convergence theory is further validated by some numerical experiments. We would like to emphasize that for true polytopal partitions, the tangential derivative stabilizer can be absent from the numerical scheme, yielding a robust and accurate numerical method for the linear elasticity equation with $P_0$ approximating functions.

The remaining of this paper is organised as follows. In Section \ref{Section:2}, we shall present the $P_0$ finite element method for the
linear elasticity problem in the mixed formulation. In Section \ref{Section:3}, we give an explicit form for the element stiffness matrices of our method. In Section \ref{Section:4}, we shall describe an equivalent formulation for the $P_0$ finite element method in the primitive variable $\bm{u}$ only. Section \ref{Section:Stability} is devoted to a discussion of the stability and solvability of the new method. In Section \ref{Section:Preparation}, we shall prepare ourselves for error estimates by deriving some identities. Section \ref{Section:H1} is devoted to the establishment of an error estimate in a discrete $H^1$-norm. In Section \ref{Section:L2}, we shall use the duality argument to derive an error estimate in the $L^2$ and some negative norms for the displacement variable. In Section \ref{Section:TI}, we shall derive some inequalities useful for error analysis. Finally, in Section \ref{Section:NE}, numerical results are reported to demonstrate the accuracy and locking-free nature of the new method.

Throughout the paper, we follow the usual notation for Sobolev spaces and norms. For any open bounded domain $D\subset \mathbb{R}^d$ ($d$-dimensional Euclidean space) with Lipschitz continuous boundary, we use $\|\cdot\|_{s,D}$ and $|\cdot|_{s,D}$ to denote the norm and seminorm in the Sobolev space $H^s(D)$ for any $s\ge 0$, respectively. The inner product in $H^s(D)$ is denoted by $(\cdot,\cdot)_{s,D}$. The space $H^0(D)$ coincides with $L^2(D)$, for which the norm and the inner product are denoted by $\|\cdot \|_{D}$ and $(\cdot,\cdot)_{D}$,
respectively. When $D=\Omega$, we shall drop the subscript $D$ in the norm and inner product notation.

\section{Numerical algorithm based on the mixed formulation}\label{Section:2}
The variational formulation of the mixed form for the linear elasticity problem \eqref{eq:general_stokes.1}-\eqref{eq:general_stokes.2} seeks $\bm{u} \in [H^{1}(\Omega)]^d $ and $p \in L^2(\Omega)$ such that $\bm{u}|_{\Gamma_D}=\bm{g}$ and satisfying
\begin{eqnarray}
&2(\mu \varepsilon(\bm{u}), \varepsilon(\bm{v})) + (\nabla\cdot \bm{v},p)=(\bm{f},\bm{v}) + \langle \bm{\varrho},\bm{v} \rangle_{\Gamma_N}, \quad &\forall\; \bm{v} \in [H^{1}_{0,\Gamma_D}(\Omega)]^d,\label{elasticity_mixed1}\\
&(\nabla \cdot \bm{u}, q )-(\lambda^{-1}p,q)=0, \quad &\forall\; q \in L^2(\Omega),\label{elasticity_mixed2}
\end{eqnarray}
where $H^{1}_{0,\Gamma_D}(\Omega)$ is the closed subspace of $H^{1}(\Omega)$ with vanishing boundary value on $\Gamma_D$.

\begin{figure}[!h]
\begin{center}
\subfigure[2D Non-convex polygonal element]{\label{Fig.sub1.1.1}
            \includegraphics [width=0.475\textwidth]{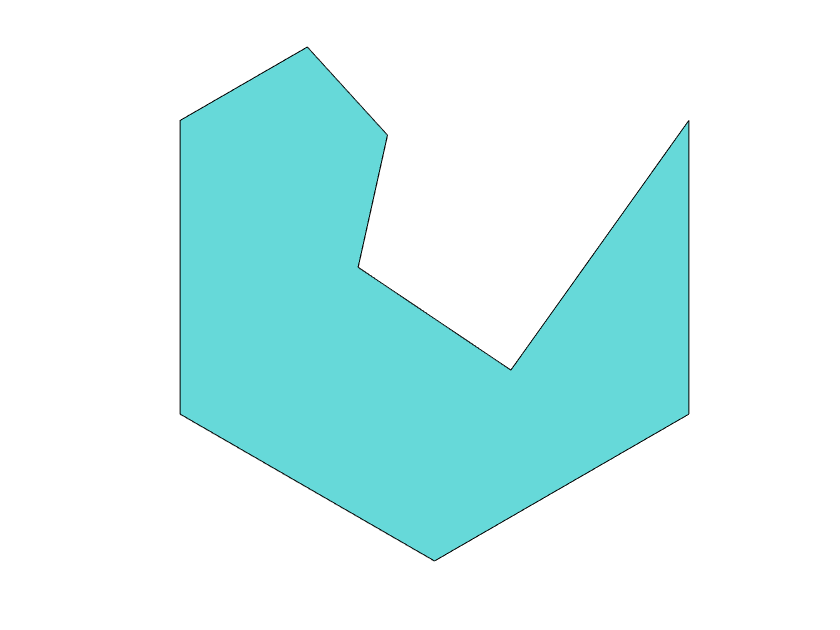}
}
\subfigure[3D polyhedral element]{\label{Fig.sub1.1.2}
            \includegraphics [width=0.475\textwidth]{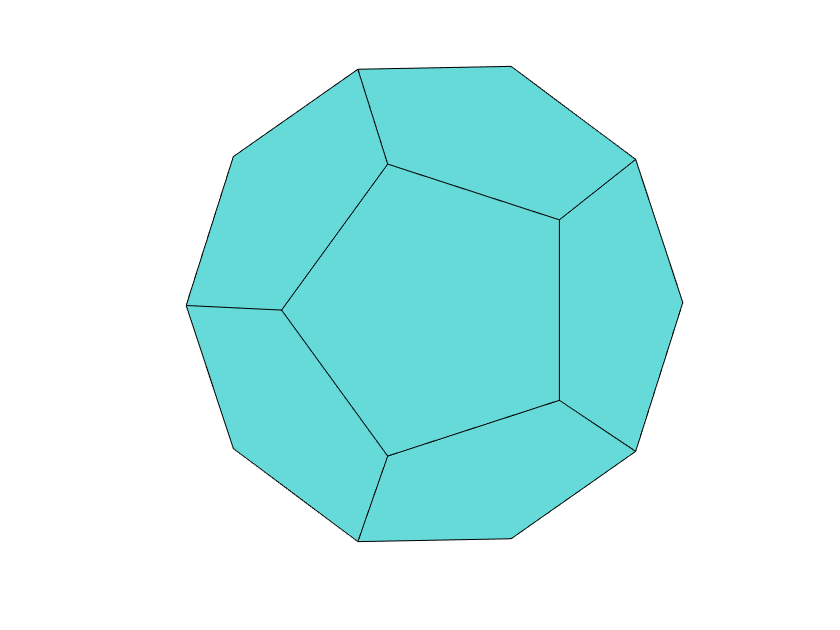}
}
\end{center}
\caption{Illustrative polygonal/polyhedral elements.}
\label{fig.polyheron}
\end{figure}

For simplicity, we consider the case of $2D$ domains. Assume that the domain $\Omega$ is of polygonal type and is partitioned into non-overlapping polygons $\T_h=\{T\}$ that are shape regular (e.g. Figure \ref{Fig.sub1.1.1}). For each $T\in \T_h$, denote by $h_T$ its diameter and by $N$ the number of edges. For each edge $e_i, \ i=1,\ldots, N$, denote by $M_i$ the midpoint and $\bn_i$ the outward normal direction of $e_i$ (see Fig. \ref{fig.polyheron} for an illustration). The meshsize of $\T_h$ is defined as $h=\max_{T\in\T_h} h_T$. Denote by $W(T)$ the space of piecewise constant functions on $\pT$. The global finite element space $W_h(\T_h)$ is constructed by patching together all the local elements $W(T)$ through a single value on each interior edge. The subspace of $W_h(\T_h)$ consisting of functions with vanishing boundary value on $\Gamma_D$ is denoted as $W_h^0(\T_h)$. Moreover, we denote by $RM(T)$ the space of rigid motions on each element $T\in \T_h$,  given by
\begin{eqnarray}
RM(T) =\{a + \eta x: a \in \mathbb{R}^d,\eta\in so(d)\},
\end{eqnarray}
where $x$ is the position vector on $T$ and $so(d)$ is the space of skew-symmetric $d\times d$ matrices.

Let $v_b\in W(T)$ be a piecewise constant function defined on the boundary of $T$, i.e.,
\[
v_b|_{e_i} =v_{b,i},
\]
with $v_{b,i}$ being a constant. The weak gradient of $v_b$ on $T$ is defined as a vector on $T$ such that
\begin{equation}\label{DefWGpoly}
\nabla_w v_b:=\displaystyle\frac{1}{|T|}\sum_{i=1}^N v_{b,i}|e_i|\bf{n_i},
\end{equation}
where $|e_i|$ is the length of the edge $e_i$ and $|T|$ is the area of the element $T$. It is not hard to see that the weak gradient $\nabla_w v_b$ satisfies the following equation:
\begin{equation}\label{DefWGpoly-new}
(\nabla_w v_b, \bm{\phi})_T=\langle v_b, \bm{\phi}\cdot\bn\rangle_\pT
\end{equation}
for all constant vector $\bm{\phi}$. Here and in what follows of the paper, $\langle\cdot,\cdot\rangle_\pT$ stands for the usual inner product in $L^2(\pT)$.

We use the conventional notation of ${P}_j(T)$ for the space of polynomials of degree $j\ge 0$ on $T$. For each $v_b\in W(T)$, we associate it with a linear extension in $T$, denoted as $\S(v_b)\in {P}_1 (T)$, satisfying
\begin{equation}\label{Def.extension}
\sum_{i=1}^{N}(\S(v_b)(M_i) -v_{b,i})\phi(M_i)|e_i|=0,\quad \forall\; \phi\in {P}_1(T).
\end{equation}
It is easy to see that $\S(v_b)$ is well defined by \eqref{Def.extension}, and its computation is local and straightforward. In fact, $\S(v_b)$ can be viewed as an extension of $v_b$ from $\partial T$ to $T$ through a least-squares fitting with respect to a discrete $L^2(T)$ norm.

On each element $T\in \T_h$, we introduce the following bilinear form:
\begin{equation}\label{Def-ST-poly}
\begin{split}
S_T(u_b,v_b):=&\ h^{-1}\sum_{i=1}^N (\S(u_b)(M_i)-u_{b,i})(\S(v_b)(M_i)-v_{b,i})|e_i|\\
            = & \ h^{-1}\langle Q_b^{(0)} \S(u_b)-u_{b},Q_b^{(0)} \S(v_b)- v_{b}\rangle_{\partial T},
            \end{split}
\end{equation}
for $u_b, v_b \in W(T)$, where $Q_b^{(0)}$ is the $L^2$ projection operator onto $W(T)$. The weak gradient and the local stabilizer defined in \eqref{DefWGpoly} and \eqref{Def-ST-poly} can be extended naturally to vector-valued functions. For example, in the two dimensional space, such an extension would imply
\begin{eqnarray}
&&\nabla_w \bm{u}_b :=\begin{bmatrix} \nabla_w u_b\\ \nabla_w v_b \end{bmatrix},\\
&& S_T(\bm{u}_b,\bm{w}_b) := S_T(u_b,w_b) + S_T(v_b,z_b)
\end{eqnarray}
for $\bm{u}_b = \begin{bmatrix} u_b\\ v_b \end{bmatrix} \in [W(T)]^2$ and $\bm{w}_b = \begin{bmatrix} w_b\\ z_b \end{bmatrix} \in [W(T)]^2$.
Using the weak gradient definition, we may define the weak strain tensor as follows:
\begin{eqnarray}
\varepsilon_w(\bm{u}_b)=\frac{1}{2}(\nabla_w \bm{u}_b + \nabla_w \bm{u}_b^{T}).
\end{eqnarray}
Analogously, the weak stress tensor is given by
\begin{eqnarray}
\bm{\sigma}_w(\bm{u}_b)=2\mu \varepsilon_w(\bm{u}_b) +\lambda(\nabla_w \cdot \bm{u} )\text{\uppercase\expandafter{\romannumeral1}}.
\end{eqnarray}

Let us introduce the following bilinear forms:
\begin{eqnarray*}
a(\bm{u}_b,\bm{v}_b)&=&2(\mu \varepsilon_w(\bm{u}_b),\varepsilon_w(\bm{v}_b)) +  S(\bm{u}_b,\bm{v}_b),\\
b(\bm{v}_b, q)&=&(\nabla_w \cdot \bm{v_b},q),\\
d(p,q) &=& \lambda^{-1}(p,q),
\end{eqnarray*}
where $\bm{u}_b, \; \bm{v}_b \in [W_h(\T_h)]^2$ and $p,\; q \in {P}_0 (\T_h)$,  and
\begin{eqnarray}
&&(\mu \varepsilon_w(\bm{u}_b),\varepsilon_w(\bm{v}_b))=\sum_{T\in \T_h}(\mu \varepsilon_w(\bm{u}_b),\varepsilon_w(\bm{v}_b))_T,\\
&& S(\bm{u}_b,\bm{v}_b)=\sum_{T\in \T_h}S_T(\bm{u}_b,\bm{v}_b)_T + \kappa \sum_{e\in \E_h}h_e^\gamma\langle [\nabla_{w,\bm{\tau}} \bm{u}_b ]_e,[\nabla_{w,\bm{\tau}} \bm{v}_b ]_e\rangle_e ,\label{eq:stabilizer}\\
&&(\nabla_w \cdot \bm{v}_b,q)=\sum_{T\in \T_h}(\nabla_w \cdot \bm{v}_b,q)_T.
\end{eqnarray}
Here and in the rest of the paper, $\kappa \geq 0$ is a prescribed stabilization parameter, $\gamma=1$ on interior edge $e$ and $\gamma=2$ on boundary edge $e$, $\nabla_{w,\bm{\tau}} \bm{u}_b = \nabla_{w}\bm{u}_b \cdot \bm{\tau} $ is the tangential component of the weak gradient $\nabla_{w}\bm{u}_b$ with $\bm{\tau}$ being the tangential direction of the edge $e$, and $[\cdot]_e$ represents the jump on edge $e$. When $e$ is on the domain boundary, the jump term
$[\nabla_{w,\bm{\tau}} \bm{u}_b ]_e$ is simply given by $(\nabla_{w,\bm{\tau}} \bm{u}_b)|_e$.

The $P_0$ finite element method for the elasticity equation \eqref{elasticity_mixed1}-\eqref{elasticity_mixed2} seeks $\bm{u}_b \in [W_h(\T_h)]^2$, $p \in {P}_0 (\T_h)$ satisfying $\bm{u}_b|_{\Gamma_d} = Q_b^{(0)} \bm{g}$ and the following equations
\begin{equation}\label{equ.elasticity-Mix-SWG}
\left\{
\renewcommand\arraystretch{2}
\begin{array}{lcl}
a(\bm{u}_b,\bm{v}_b) +b(\bm{v}_b, p) &=& (\bm{f},\S(\bm{v}_b)) + \langle \bm{\varrho}, \bm{v}_b\rangle_{\Gamma_N}\\
b(\bm{u}_b, q) - d(p,q)&=&0
\end{array}
\right.
\end{equation}
for all $\bm{v}_b\in [W_h^0(\T_h)]^2$ and $q \in {P}_0 (\T_h)$.

\begin{remark}
The numerical scheme \eqref{equ.elasticity-Mix-SWG} can be extended to 3D domains with polyhedral partitions. The finite element space will involve piecewise constants on the face of each element; a sample polyhedral element is illustrated in Fig. \ref{Fig.sub1.1.2}. Details are left to interested readers as an exercise.
\end{remark}

\section{Computation of element stiffness matrices}\label{Section:3}
The finite elmenet scheme \eqref{equ.elasticity-Mix-SWG} is user-friendly in computer implementation. In this section, we shall present a formula for the computation of the element stiffness matrix and the element load vector on general 2D polygonal elements. We note that the 3D extension of the element stiffness matrices is straightforward.

Observe that, for $\kappa=0$, the scheme \eqref{equ.elasticity-Mix-SWG} involves no jump term for the weak tangential derivative of the approximating functions so that the corresponding element stiffness matrices are easier to compute than the general case of $\kappa>0$.
For $\kappa>0$, the scheme involves the term $\sum_{e\in \E_h}h_e^\gamma\langle [\nabla_{w,\bm{\tau}}\bm{u}_b],[\nabla_{w,\bm{\tau}} \bm{v}_b]\rangle_e$ which consists of the jump of the tangential derivative on each edge and hence involves two elements in the local stiffness matrix.
Thus, it is effectively a `two-element stiffness matrix' which shall be called `edge stiffness matrix'. In practical computation, the above two types of stiffness matrices must be each computed and assembled in the implementation of the numerical scheme.

\begin{figure}[!h]
\begin{center}
\includegraphics [width=0.475\textwidth]{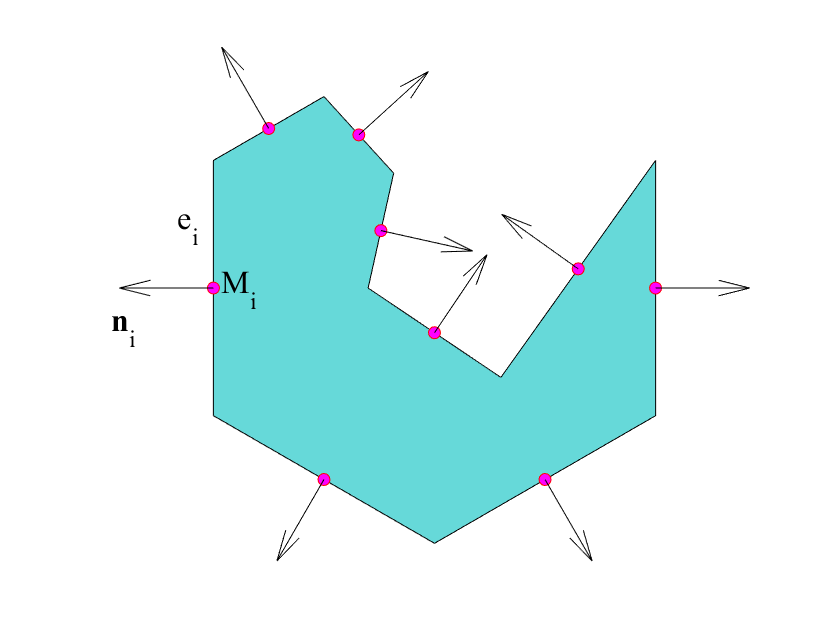}
\end{center}
\caption{A 2D non-convex polygonal element}
\label{fig.polyheron_SM}
\end{figure}

Denote by
$X_{{u}_b}$, $X_{{v}_b}$  and $P$ the vector representation of $u_b$, $v_b$ and $p_h$ given by
\begin{equation}
X_{{u}_b}
=
\begin{bmatrix}\label{Elem_vector}
u_{b,1} \\
u_{b,2} \\
\vdots  \\
u_{b,N} \\
\end{bmatrix},\;
X_{{v}_b}
=
\begin{bmatrix}
v_{b,1} \\
v_{b,2} \\
\vdots  \\
v_{b,N} \\
\end{bmatrix},\;\text{ and } P=[p_h].
\end{equation}
Here $u_{b,i},\, v_{b,i,\;1\leq i\leq N}$ are the values of $u_b,\; v_b$ at the midpoint $M_i$ of edge $e_i$, and $P$ is the value of the numerical pseudo-pressure $p_h$ at the center of $T$. Subsection \ref{SubSection:ESM-1} is devoted to the computation of the element stiffness matrix for scheme \eqref{equ.elasticity-Mix-SWG} with $\kappa=0$, and Subsection \ref{SubSection:ESM-2} shall detail the computation of the `edge stiffness matrix' for the term $\sum_{e\in \E_h}h_e^\gamma\langle [\nabla_{w,\bm{\tau}}\bm{u}_b],[\nabla_{w,\bm{\tau}} \bm{v}_b]\rangle_e$.

\subsection{Element stiffness matrix and element load vector with $\kappa=0$}\label{SubSection:ESM-1}
We have the following theorem for the element stiffness matrix and the load vector for scheme \eqref{equ.elasticity-Mix-SWG} with $\kappa=0$:
\begin{theorem}\label{THM:ESM}
The element stiffness matrix and the element load vector for the SWG scheme \eqref{equ.elasticity-Mix-SWG} when $\kappa =0$ are given in a block matrix form as follows:
\begin{equation}\label{EQ:ESM:01}
\begin{bmatrix*}[l]
- \displaystyle\frac{\mu}{|T|} Q_2 \otimes Q_2  + {A\_B} & ~~\displaystyle\frac{\mu}{|T|} Q_2 \otimes Q_1           & \; Q_1 \\
~~\displaystyle\frac{\mu}{|T|} Q_1 \otimes Q_2          & - \displaystyle\frac{\mu}{|T|} Q_1 \otimes Q_1 + {A\_B}   & \; Q_2\\
~~Q_1^t & ~~Q_2^t  & \; \displaystyle\frac{-|T|}{\lambda} \\
\end{bmatrix*}
\begin{bmatrix}
X_{{u}_b} \\
X_{{v}_b} \\
P \\
\end{bmatrix}
\cong \begin{bmatrix}
F_1 \\
F_2 \\
0 \\
\end{bmatrix}
+
\begin{bmatrix}
G_1 \\
G_2 \\
0 \\
\end{bmatrix},
\end{equation}
where the block components in \eqref{EQ:ESM:01} can be computed explicitly as follows:
\begin{eqnarray*}
&&{A\_B}=A+B,\\
&&A=\{a_{i,j}\}_{N\times N} = h^{-1} (E- EM(M^tEM)^{-1}M^tE), \\
&&B=\{b_{i,j}\}_{N\times N}, \; b_{i,j} = 2\mu\bm{n}_i\cdot\bm{n}_j\displaystyle\frac{|e_i||e_j|}{|T|},\\
&&D=\{d_{i,j}\}_{3\times N}=(M^tEM)^{-1}M^tE,\\
&& Q_1=\big\{n_{x,i} |e_i|\big\}_{N\times 1},\;\ Q_2=\big\{n_{y,i} |e_i|\big\}_{N\times 1},\\
&&F_1=\{f^{(1)}_{i}\}_{N\times 1}, \; \displaystyle f^{(1)}_{i} = \int_{T}f^{(1)}(x,y) (d_{1,i} + d_{2,i}(x-x_T) + d_{3,i}(y-y_T))dT,\\
&&F_2=\{f^{(2)}_{i}\}_{N\times 1}, \; \displaystyle f^{(2)}_{i} = \int_{T}f^{(2)}(x,y) (d_{1,i} + d_{2,i}(x-x_T) + d_{3,i}(y-y_T)) dT,\\
&&G_1=\{g^{(1)}_{i}\}_{N\times 1}, \; \displaystyle g^{(1)}_{i} = \int_{e_i}g^{(1)}(x,y) \delta_{\Gamma_N}dS,\;
\\
&&G_2=\{g^{(2)}_{i}\}_{N\times 1}, \; \displaystyle g^{(2)}_{i} = \int_{e_i}g^{(2)}(x,y) \delta_{\Gamma_N}dS,\;
\end{eqnarray*}
Here $Q_1^t$ and $Q_2^t$ stands for the transpose of $Q_1$ and $Q_2$, respectively. The matrices $M$ and $E$ are given by
\begin{equation*}
M=
\begin{bmatrix}
1      & x_{1} - x_T & y_{1} - y_T\\
1      & x_{2} - x_T & y_{2} - y_T\\
\vdots & \vdots        & \vdots       \\
1      & x_{N} - x_T & y_{N} - y_T\\
\end{bmatrix}_{N\times3},\;
E=
\begin{bmatrix}
|e_1| &       &           &       \\  
      & |e_2| &           &       \\
      &       & \ddots    &       \\
      &       &           & |e_N| \\
\end{bmatrix}_{N\times N},
\end{equation*}
where $M_T=(x_T,y_T)$ is any point on the plane (e.g., the center of $T$ as a specific case), $(x_i, y_i)$ is the midpoint of $e_i$, $|e_i|$ is the length of edge $e_i$, $\bm{n}_i$ is the outward normal vector on $e_i$, and $|T|$ is the area of the element $T$.
\end{theorem}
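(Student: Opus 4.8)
The plan is to read off each block of the system \eqref{EQ:ESM:01} by evaluating the bilinear forms $a(\cdot,\cdot)$, $b(\cdot,\cdot)$, $d(\cdot,\cdot)$ and the right-hand side of \eqref{equ.elasticity-Mix-SWG} (with $\kappa=0$) on the nodal basis of $[W(T)]^2\times P_0(T)$. Let $\chi_i\in W(T)$ be the function equal to $1$ on $e_i$ and $0$ on the other edges, so that $u_b=\sum_i u_{b,i}\chi_i$ and likewise for $v_b$, while the pressure contributes the single constant $P$. The starting point is the identity \eqref{DefWGpoly}, which gives the crucial simplification $\nabla_w\chi_i=\frac{|e_i|}{|T|}\bm{n}_i$. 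Because each weak gradient is a \emph{constant} vector on $T$, the weak strain $\varepsilon_w$ and weak divergence $\nabla_w\cdot$ of every basis function are constant, so all the volume integrals defining $a$, $b$, $d$ collapse to the constant value times $|T|$. This reduces the theorem to bookkeeping of constants, organized into the $(u,u)$, $(u,v)$, $(v,u)$, $(v,v)$, pressure-coupling, and pressure-pressure blocks.

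For the strain term $2\mu(\varepsilon_w(\bm u_b),\varepsilon_w(\bm v_b))_T$, I would record the strain components of the vector basis functions: the pair $(\chi_i,0)$ has $\varepsilon_{11}=\frac{|e_i|}{|T|}n_{x,i}$, $\varepsilon_{12}=\frac{|e_i|}{2|T|}n_{y,i}$, $\varepsilon_{22}=0$, while $(0,\chi_i)$ has $\varepsilon_{22}=\frac{|e_i|}{|T|}n_{y,i}$, $\varepsilon_{12}=\frac{|e_i|}{2|T|}n_{x,i}$, $\varepsilon_{11}=0$. Substituting these into $2\mu|T|\,(\varepsilon_{11}\varepsilon_{11}+\varepsilon_{22}\varepsilon_{22}+2\varepsilon_{12}\varepsilon_{12})$ yields, for the $(u,u)$ block, the entry $\frac{\mu}{|T|}|e_i||e_j|(2n_{x,i}n_{x,j}+n_{y,i}n_{y,j})$. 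The only genuine algebra here is the splitting $2n_{x,i}n_{x,j}+n_{y,i}n_{y,j}=2\,\bm n_i\cdot\bm n_j-n_{y,i}n_{y,j}$, which exhibits this block as $B-\frac{\mu}{|T|}Q_2\otimes Q_2$; the $(v,v)$ block comes out symmetrically as $B-\frac{\mu}{|T|}Q_1\otimes Q_1$, and the off-diagonal blocks directly as $\frac{\mu}{|T|}Q_2\otimes Q_1$ and its transpose $\frac{\mu}{|T|}Q_1\otimes Q_2$. This accounts for every $\mu$-term in the matrix except the stabilizer contribution $A$.

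The main obstacle is the derivation of $A=h^{-1}(E-EM(M^tEM)^{-1}M^tE)$ from $S_T$. I would first solve the least-squares system \eqref{Def.extension}: taking $\phi\in\{1,\,x-x_T,\,y-y_T\}$ turns \eqref{Def.extension} into the normal equations $M^tEM\,c=M^tE\,X$, so the coefficient vector of $\S(v_b)$ is $c=DX$ with $D=(M^tEM)^{-1}M^tE$, which also justifies the stated formula for $D$ and, reading the columns of $D$, the form $\S(\chi_i)=d_{1,i}+d_{2,i}(x-x_T)+d_{3,i}(y-y_T)$. Since the average of a linear function over a straight edge equals its midpoint value, $Q_b^{(0)}\S(v_b)|_{e_i}=\S(v_b)(M_i)=(Mc)_i$, so the array $Q_b^{(0)}\S(v_b)-v_b$ is represented by $(MD-I)X$ and $S_T$ becomes $h^{-1}X_{u_b}^t(MD-I)^tE(MD-I)X_{v_b}$. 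The decisive step is to recognize $P:=MD=M(M^tEM)^{-1}M^tE$ as the $E$-orthogonal projection onto the range of $M$: a direct computation gives $P^tE=EP$ and $P^tEP=EP$, whence $(P-I)^tE(P-I)=E-EP=E-EM(M^tEM)^{-1}M^tE$, so that the stabilizer matrix $h^{-1}(P-I)^tE(P-I)$ equals $A$. Because $S_T$ acts componentwise on the two scalar components, this same $A$ lands on both the $(u,u)$ and $(v,v)$ diagonal blocks, completing the assembly of $A\_B=A+B$.

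It remains to assemble the coupling terms and the load. For the $b$-form, $\nabla_w\cdot(\chi_i,0)=\frac{|e_i|}{|T|}n_{x,i}$ and $\nabla_w\cdot(0,\chi_i)=\frac{|e_i|}{|T|}n_{y,i}$, so testing against the constant pressure and multiplying by $|T|$ produces the columns $Q_1$ and $Q_2$ (and, by the symmetry of $b$, the rows $Q_1^t$, $Q_2^t$), while $d(p,q)=\lambda^{-1}(p,q)_T=\lambda^{-1}|T|P$ enters with the sign of the second equation to give the $(P,P)$ entry $-|T|/\lambda$. Finally, $(\bm f,\S(\bm v_b))$ with $\bm v_b=(\chi_i,0)$ equals $\int_T f^{(1)}\,\S(\chi_i)\,dT$, which is $f^{(1)}_i$ once $\S(\chi_i)$ is substituted from the columns of $D$, and symmetrically for $F_2$; the Neumann term $\langle\bm\varrho,\bm v_b\rangle_{\Gamma_N}$ restricted to $e_i$ yields $G_1,G_2$. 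Collecting all blocks gives \eqref{EQ:ESM:01}, and I expect the $E$-orthogonal projection identity above to be the only step requiring care, the remainder being organized but routine bookkeeping.
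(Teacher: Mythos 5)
Your proposal is correct, and it reaches the stated matrix by a genuinely different route from the paper's. For the strain block, the paper's proof hinges on the identity $(2\mu\varepsilon_w(\bu_b),\varepsilon_w(\bv_b))_T=(2\mu\nabla_w\bu_b,\nabla_w\bv_b)_T-(\mu\nabla_w\times\bu_b,\nabla_w\times\bv_b)_T$ (see \eqref{eq.bilinear.stiff.1}): the full weak-gradient term produces $B$, and the weak-curl term produces exactly the rank-one corrections $-\frac{\mu}{|T|}Q_2\otimes Q_2$, $\frac{\mu}{|T|}Q_2\otimes Q_1$, etc., so the block structure of \eqref{EQ:ESM:01} is explained structurally as ``gradient minus curl.'' You instead evaluate $\varepsilon_w$ on the nodal basis $(\chi_i,0)$, $(0,\chi_i)$ directly and recover the same blocks through the entrywise splitting $2n_{x,i}n_{x,j}+n_{y,i}n_{y,j}=2\,\bm n_i\cdot\bm n_j-n_{y,i}n_{y,j}$; this avoids introducing the weak curl altogether, at the price of making the appearance of the $Q\otimes Q$ terms look like an algebraic accident rather than a curl contribution. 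The second, more substantive difference is coverage: the paper outsources the stabilizer matrix $A$, the matrix $B$, the $b$- and $d$-blocks, and the load vector to \cite{LiuWang_SWG_Stokes_2018}, whereas you derive all of them, in particular obtaining $A$ from the normal equations $M^tEM\,c=M^tE\,X$ of \eqref{Def.extension}, the observation that the edge average of a linear function equals its midpoint value (so $Q_b^{(0)}\S(v_b)$ is represented by $MDX$), and the $E$-orthogonal projection identity $(MD-I)^tE(MD-I)=E-EM(M^tEM)^{-1}M^tE$. That last identity is exactly the nontrivial algebra the paper never displays, so your argument is self-contained where the paper's is by citation; it also yields $A=h^{-1}(E-EM(M^tEM)^{-1}M^tE)$ without the spurious factor $\kappa$ that appears in the paper's intermediate formula \eqref{EQ:October22:101}, in agreement with the theorem statement and with the definition \eqref{eq:stabilizer}, in which $\kappa$ multiplies only the tangential jump term. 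Two cosmetic remarks: your $P=MD$ clashes notationally with the pressure vector $P$ of \eqref{Elem_vector}, and it is worth stating explicitly that the symmetry of the strain form lets you identify the $(1,2)$ block as the transpose of the $(2,1)$ block; neither affects correctness.
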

\begin{proof}
The element stiffness matrix on $T\in\T_h$ consists of four matrices corresponding to the following forms:
$$
a(\bm{u}_b,\bm{v}_b)_T,\; b(\bm{v}_b, p)_T,\; b(\bm{u}_b, q)_T, \text{ and } d(p,q)_T.
$$
Since the stiffness matrices for $b(\bm{v}_b, p)_T,\; b(\bm{u}_b, q)_T, \; d(p,q)_T$ and the load vector $(\bm{f},\S(\bm{v}_b))$ have been fully discussed in \cite{LiuWang_SWG_Stokes_2018}, we shall focus only on the computation of the stiffness matrix corresponding to $a(\bm{u}_b,\bm{v}_b)_T$ in this paper.
From the definition of the bilinear form $a(\bm{u}_b,\bm{v}_b)_T$ for $\kappa =0$, we have:
\begin{equation}\label{EQ:October22:100}
a(\bm{u}_b,\bm{v}_b)_T=  S_T(\bm{u}_b,\bm{v}_b) + 2(\mu \varepsilon_w(\bm{u}_b),\varepsilon_w(\bm{v}_b))_T.
\end{equation}
On each element $T\in\T_h$, it is not hard to check the following identity:
\begin{equation}
\begin{split}
&(2\mu \varepsilon_w(\bu_b), \varepsilon_w(\bv_b))_T\\
=& (2\mu \varepsilon_w(\bu_b), \nabla_w \bv_b)_T \\
=& (\mu(\nabla_w \bu_b + \nabla_w \bu_b^t), \nabla_w \bv_b)_T\\
=& (2\mu\nabla_w\bu_b, \nabla_w\bv_b)_T - (\mu( \nabla_w\bu_b - \nabla_w\bu^t_b), \nabla_w\bv_b)_T \\
=& (2\mu\nabla_w\bu_b, \nabla_w\bv_b)_T - (\mu \nabla_w\times \bu_b, \nabla_w\times\bv_b)_T,
\end{split}
\label{eq.bilinear.stiff.1}
\end{equation}
where the weak curl of $\bm{u}_b$ on $T$ is defined as
\begin{eqnarray}\label{Defcurlpoly}
\nabla_w \times \bm{u}_b
&=&-\displaystyle\frac{1}{|T|}\sum_{i=1}^N \bm{u}_{b,i}\times \bm{n}_i |e_i|
=\displaystyle\frac{1}{|T|}\sum_{i=1}^N (u_{b,i}n_{y,i} - v_{b,i} n_{x,i}) |e_i|.
\end{eqnarray}
Note that the weak curl $\nabla_w v_b$ satisfies the following equation:
\begin{equation}\label{DefWG-curl-new}
(\nabla_w \times \bm{v}_b, \bm{\phi})_T=-\langle \bm{v}_b\times\bn, \bm{\phi}\rangle_\pT
\end{equation}
for all constant vector $\bm{\phi}$.

From $S_T(\bm{u}_b,\bm{w}_b) := S_T(u_b,w_b) + S_T(v_b,z_b)$, the element stiffness matrix corresponding to the bilinear form $S_T(u_b,w_b)$ is given by
\begin{equation}\label{EQ:October22:101}
A=\{a_{i,j}\}_{i,j=1}^N: = \kappa h^{-1}(E- EM(M^tEM)^{-1}M^tE),
\end{equation}
and so is the one for the bilinear form $S_T(v_b,z_b)$.
Moreover, the element stiffness matrix corresponding to $(2\mu\nabla_w u, \nabla_w w)$ is given by
\begin{equation}\label{EQ:October22:102}
B:=\{b_{i,j}\}_{i,j=1}^N,\; b_{i,j} = 2\mu\bm{n}_i\cdot\bm{n}_j\displaystyle\frac{|e_i||e_j|}{|T|}.
\end{equation}
Readers are referred to \cite{LiuWang_SWG_Stokes_2018} for a detailed derivation of \eqref{EQ:October22:101}-\eqref{EQ:October22:102}.

Next, from \eqref{Defcurlpoly} and the equality \eqref{DefWG-curl-new}, we have
\begin{eqnarray}
&&(\mu\nabla_w \times \bu_b,\nabla_w  \times \bv_b )_T\\
&=&(\displaystyle\mu\frac{1}{|T|}\sum_{i=1}^N \bu_{b,i}\times \bm{n}_i|e_i|,\displaystyle\frac{1}{|T|}\sum_{j=1}^N \bv_{b,j}\times \bm{n}_j|e_j|)_{T} \nonumber\\
&=&(\displaystyle\mu\frac{1}{|T|}\sum_{i=1}^N ( u_{b,i}n_{y,i} - v_{b,i} n_{x,i})|e_i|,\displaystyle\frac{1}{|T|}\sum_{j=1}^N (w_{b,j}n_{y,j} - z_{b,j} n_{x,j})|e_j|)_{T} \nonumber\\
&=&\begin{bmatrix} z_{b,j} ~~ w_{b,j} \end{bmatrix}
\begin{bmatrix} a_{i,j}   & b_{i,j} \\ c_{i,j}   & d_{i,j} \end{bmatrix}
\begin{bmatrix} v_{b,i} \\ u_{b,i} \end{bmatrix}
=\begin{bmatrix} w_{b,j} ~~ z_{b,j} \end{bmatrix}
\begin{bmatrix} d_{i,j}   & c_{i,j} \\ b_{i,j}   & a_{i,j} \end{bmatrix}
\begin{bmatrix} u_{b,i} \\ v_{b,i} \end{bmatrix},\nonumber
\end{eqnarray}
with
\begin{eqnarray*}\label{EQ:element-stiffness-matrix-CD}
&& a_{i,j} = ~~(\mu n_{x,i},n_{x,j})_T\displaystyle\frac{|e_i||e_j|}{|T|^2},~~ b_{i,j} = -(\mu n_{y,i},n_{x,j})_T\displaystyle\frac{|e_i||e_j|}{|T|^2},\\
&& c_{i,j} = -(\mu n_{x,i},n_{y,j})_T\displaystyle\frac{|e_i||e_j|}{|T|^2},~~ d_{i,j} = ~~(\mu n_{y,i},n_{y,j})_T\displaystyle\frac{|e_i||e_j|}{|T|^2},~ 1\leq i,j\leq N.
\end{eqnarray*}
Thus, the corresponding element stiffness matrix corresponding to $(\mu \nabla_w\times \bu, \nabla_w\times\bv)$ is given by
$C:= \begin{bmatrix} d_{i,j}  & c_{i,j}\\ b_{i,j}  & a_{i,j} \end{bmatrix}$.
Note that the matrix $\{a_{i,j}\}$ is in fact given by $Q_1 \otimes Q_1$, and similarly for other terms in $C$. Thus, the element stiffness matrix corresponding to $a(\bm{u}_b,\bm{v}_b)_T$ takes the following form:
\begin{equation}\label{SM_sigma}
P_1=
\begin{bmatrix*}[l]
- \displaystyle\frac{\mu}{|T|} Q_2 \otimes Q_2  + A + B & ~~\displaystyle\frac{\mu}{|T|} Q_2 \otimes Q_1          \\
~~\displaystyle\frac{\mu}{|T|} Q_1 \otimes Q_2          & - \displaystyle\frac{\mu}{|T|} Q_1 \otimes Q_1 + A + B
\end{bmatrix*},
\end{equation}
with $Q_1=\big\{n_{x,i} |e_i|\big\}_{N\times 1}$ and $Q_2=\big\{n_{y,i} |e_i|\big\}_{N\times 1}$.
\end{proof}

\begin{figure}[!h]
\begin{center}
\includegraphics [width=0.6\textwidth]{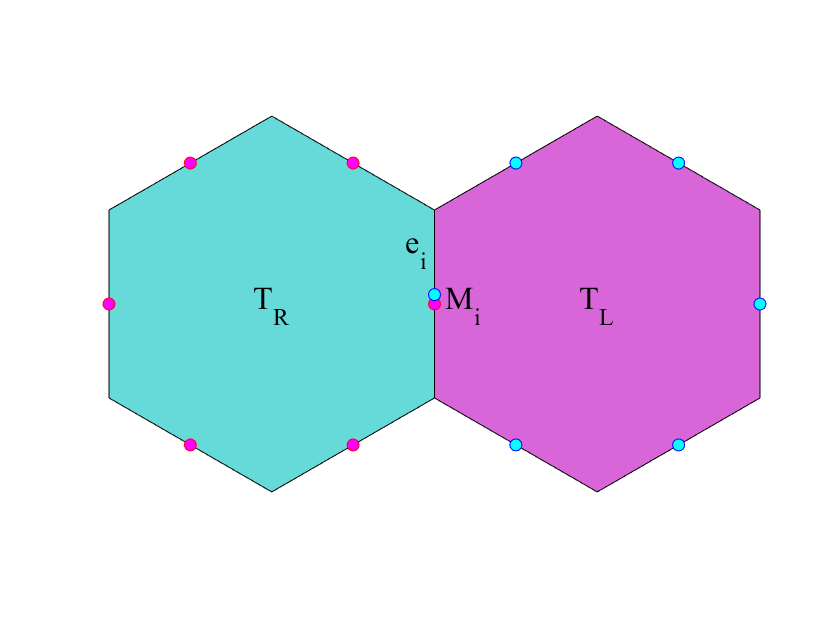}
\end{center}
\caption{Two adjacent polygonal elements with a common edge}
\label{fig.Twopolyheron}
\end{figure}

\subsection{Edge stiffness matrix for $h_e^\gamma\langle [\nabla_{w,\bm{\tau}} \bm{u}_b],[\nabla_{w,\bm{\tau}} \bm{v}_b]\rangle_e$}\label{SubSection:ESM-2}
For any interior edge $e$ of the partition $\T_h$, there exist two adjacent elements that share $e$ in common (see Figure \ref{fig.Twopolyheron}). Observe that the stabilization term $h_e^\gamma\langle [\nabla_{w,\bm{\tau}} \bm{u}_b],[\nabla_{w,\bm{\tau}} \bm{v}_b]\rangle_e$ involves the jump of the weak tangential derivative between the two adjacent elements denoted by $T_R$ and $T_L$. Thus, the corresponding stiffness matrix, called the {\em edge stiffness matrix}, shall include the degrees of freedom from both elements.
Denote by
$X^R_{{u}_b} $, $X^R_{{v}_b}$  and $P^R$  (respectively $X^L_{{u}_b}$, $X^L_{{v}_b}$, $P^L$) the vector representation of $u_b$, $v_b$ and $p_h$ on the element $T_R$ (respectively $T_L$) defined as in the equation \eqref{Elem_vector}.

\begin{theorem}\label{THM:EdgeSM}
The edge stiffness matrix for the term  $h_e^\gamma\langle [\nabla_{w,\bm{\tau}} \bm{u}_b],[\nabla_{w,\bm{\tau}} \bm{v}_b]\rangle_e$ in the SWG scheme \eqref{equ.elasticity-Mix-SWG} has the following representation as a block matrix:
\begin{equation}\label{EQ:EdgeSM:01}
\begin{matrix}
\cellcolor{red!15}h_e^\gamma|e|
\end{matrix}
\left[
\begin{bmatrix*}[l]
\cellcolor{red!15}~~\mathbb{Q} &\cellcolor{red!15} & & \\
\cellcolor{red!15}~~&\cellcolor{red!15}&& \\
\cellcolor{red!00}~~&&\cellcolor{red!15}~~\mathbb{Q}&\cellcolor{red!15} \\
\cellcolor{red!00}~~&&\cellcolor{red!15}~~&\cellcolor{red!15} \\
\end{bmatrix*}
\right.
\left[
\begin{bmatrix}
\cellcolor{blue!15}X^R_{{u}_b} \\
\cellcolor{blue!15}X^L_{{u}_b} \\
\cellcolor{blue!15}X^R_{{v}_b} \\
\cellcolor{blue!15}X^L_{{v}_b}
\end{bmatrix}
\right.,
\end{equation}
where the block matrix $\mathbb{Q}$ is given by
\begin{equation}\label{EQ:EdgeSM:01:200}
\mathbb{Q}_=
[Q^R_1 \;  -Q^L_1 ]
\begin{bmatrix}
 ~~Q^R_1\\
 -Q^L_1
\end{bmatrix}
+
[Q^R_2 \;  -Q^L_2 ]
\begin{bmatrix}
 ~~Q^R_2\\
 -Q^L_2
\end{bmatrix},
\end{equation}
with
\begin{eqnarray*}
&&Q^R_1=\bm{\tau}_x\{q_{i}\}_{N\times 1}, \; q_{i} = |e_{i,R}|
  \bm{n}_{x,i_R},\\
&&Q^R_2=\bm{\tau}_y\{q_{i}\}_{N\times 1}, \; q_{i} = |e_{i,R}|
  \bm{n}_{y,i_R},
\end{eqnarray*}
and
\begin{eqnarray*}
&&Q^L_1=\bm{\tau}_x\{q_{i}\}_{N\times 1}, \; q_{i} =  |e_{i,L}|
  \bm{n}_{x,i_L},\\
&&Q^L_2=\bm{\tau}_y\{q_{i}\}_{N\times 1}, \; q_{i} =  |e_{i,L}|
  \bm{n}_{y,i_L}.
\end{eqnarray*}
\end{theorem}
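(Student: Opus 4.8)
The plan is to model the computation on the weak-curl term already handled in the proof of Theorem~\ref{THM:ESM}, since the stabilizer $h_e^\gamma\langle[\nabla_{w,\bm{\tau}}\bm{u}_b]_e,[\nabla_{w,\bm{\tau}}\bm{v}_b]_e\rangle_e$ is, like the weak curl, a bilinear form assembled entirely from the weak gradient \eqref{DefWGpoly}. First I would record that on a single element $T$ the tangential weak derivative of a scalar boundary datum is the constant
\[
\nabla_{w,\bm{\tau}}u_b|_T=\nabla_w u_b\cdot\bm{\tau}
=\frac{1}{|T|}\sum_{i=1}^N u_{b,i}|e_i|\,(\bm{n}_i\cdot\bm{\tau})
=\frac{1}{|T|}\sum_{i=1}^N u_{b,i}|e_i|\,(n_{x,i}\tau_x+n_{y,i}\tau_y).
\]
Writing $\bm{u}_b=(u_b,v_b)$ and $\bm{v}_b=(w_b,z_b)$, the vector $\nabla_{w,\bm{\tau}}\bm{u}_b=(\nabla_{w,\bm{\tau}}u_b,\nabla_{w,\bm{\tau}}v_b)^t$ has two independent scalar entries, so the edge inner product splits as $\langle[\nabla_{w,\bm{\tau}}\bm{u}_b]_e,[\nabla_{w,\bm{\tau}}\bm{v}_b]_e\rangle_e=\langle[\nabla_{w,\bm{\tau}}u_b]_e,[\nabla_{w,\bm{\tau}}w_b]_e\rangle_e+\langle[\nabla_{w,\bm{\tau}}v_b]_e,[\nabla_{w,\bm{\tau}}z_b]_e\rangle_e$. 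This decoupling of the two displacement components is exactly what produces the two identical diagonal blocks $\mathbb{Q}$ in \eqref{EQ:EdgeSM:01} together with the vanishing coupling between $X^R_{u_b},X^L_{u_b}$ and $X^R_{v_b},X^L_{v_b}$.

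Second, since each jump is constant along $e$, every edge inner product collapses to $|e|$ times the product of the two constant jump values, which is the origin of the scalar prefactor $h_e^\gamma|e|$. On the shared edge I would write $[\nabla_{w,\bm{\tau}}u_b]_e=(\nabla_{w,\bm{\tau}}u_b)|_{T_R}-(\nabla_{w,\bm{\tau}}u_b)|_{T_L}$, substitute the element-wise formula above, and collect the coefficients of the nodal unknowns in $X^R_{u_b}$ and $X^L_{u_b}$, expressing the jump as a single linear functional of the combined vector $[(X^R_{u_b})^t,(X^L_{u_b})^t]^t$. Splitting the coefficient of $\bm{n}_i\cdot\bm{\tau}=n_{x,i}\tau_x+n_{y,i}\tau_y$ into its $\tau_x$- and $\tau_y$-weighted normal parts identifies precisely the blocks $Q^R_1,Q^R_2$ (entering with a $+$ sign from $T_R$) and $Q^L_1,Q^L_2$ (entering with a $-$ sign through the jump) as defined in the statement. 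Forming the resulting rank-structured matrix from these pieces and scaling by $h_e^\gamma|e|$ yields the block $\mathbb{Q}$ of \eqref{EQ:EdgeSM:01:200}, and the identical computation on the $v$--$z$ pair reproduces $\mathbb{Q}$ in the second diagonal block.

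The substitution and collection of coefficients are routine; the step that needs genuine care is the orientation and local-index bookkeeping on the common edge, illustrated in Figure~\ref{fig.Twopolyheron}. Because $T_R$ and $T_L$ index their edges independently, I must identify the local index $i_R$ on $T_R$ and $i_L$ on $T_L$ that both correspond to $e$, use that the outward normals $\bm{n}_{i_R}$ and $\bm{n}_{i_L}$ point in opposite directions, and fix a single orientation of $\bm{\tau}$ shared by the two sides so that the minus sign in $[\cdot]_e$ is consistent with the $-Q^L_1,-Q^L_2$ entries appearing in \eqref{EQ:EdgeSM:01:200}. I expect this sign and orientation reconciliation, rather than any analytic estimate, to be the main obstacle in turning the above outline into a verified formula.
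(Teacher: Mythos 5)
Your proposal is correct and follows essentially the same route as the paper's own proof: expand the jump on the shared edge via the elementwise weak-gradient formula, use the constancy of the jump along $e$ to extract the factor $h_e^\gamma|e|$, exploit the componentwise decoupling of $(u_b,v_b)$ from $(w_b,z_b)$ to obtain the two identical diagonal blocks $\mathbb{Q}$, and collect the coefficients of the stacked right/left unknown vectors into the blocks $Q^R_k$, $Q^L_k$. The orientation worry you flag is immaterial because the jump enters the form quadratically, so writing it as $T_R-T_L$ rather than the paper's $T_L-T_R$ produces the same matrix, and the rest is exactly the bookkeeping carried out in the paper.
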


\begin{proof}
For any interior edge $e$ of the partition $\T_h$, we have
\begin{eqnarray}
&&h_e^\gamma\langle [\nabla_{w,\bm{\tau}}  \bm{u}_b],[\nabla_{w,\bm{\tau}} \bm{v}_b]\rangle_e \nonumber\\
&=&h_e^\gamma \langle ({\nabla_{w,\bm{\tau}} \bm{u}_b}_{T_{L}} - {\nabla_{w,\bm{\tau}} \bm{u}_b}_{T_R}, {\nabla_{w,\bm{\tau}} \bm{v}_b}_{T_{L}} - {\nabla_{w,\bm{\tau}} \bm{v}_b}_{T_R})\rangle_e \nonumber \\
&=&h_e^\gamma \langle
\begin{bmatrix}
\nabla_{w} {u}_b \cdot \bm{\tau}\\
\nabla_{w} {v}_b \cdot \bm{\tau}
\end{bmatrix}_{T_{L}}
-
\begin{bmatrix}
\nabla_{w} {u}_b \cdot \bm{\tau}\\
\nabla_{w} {v}_b \cdot \bm{\tau}
\end{bmatrix}_{T_{R}},
\begin{bmatrix}
\nabla_{w} {w}_b \cdot \bm{\tau}\\
\nabla_{w} {z}_b \cdot \bm{\tau}
\end{bmatrix}_{T_{L}}
-
\begin{bmatrix}
\nabla_{w} {w}_b \cdot \bm{\tau}\\
\nabla_{w} {z}_b \cdot \bm{\tau}
\end{bmatrix}_{T_{R}}\rangle \nonumber\\
&=&\displaystyle h_e^\gamma \langle
\begin{bmatrix}
\displaystyle\frac{1}{|T_R|} \displaystyle\sum_{i_R=1}^{N_{R}} u_{b,i_R}\bm{n}_{i_R} \cdot \bm{\tau}\\
\displaystyle\frac{1}{|T_R|} \displaystyle\sum_{i_R=1}^{N_{R}} v_{b,i_R}\bm{n}_{i_R} \cdot \bm{\tau}
\end{bmatrix}
-
\begin{bmatrix}
\displaystyle\frac{1}{|T_L|} \displaystyle\sum_{i_L=1}^{N_{L}} u_{b,i_L}\bm{n}_{i_L} \cdot \bm{\tau}\\
\displaystyle\frac{1}{|T_L|} \displaystyle\sum_{i_L=1}^{N_{L}} v_{b,i_L}\bm{n}_{i_L} \cdot \bm{\tau}
\end{bmatrix}, \nonumber\\
&&~~~~ \displaystyle
\begin{bmatrix}
\displaystyle\frac{1}{|T_R|} \displaystyle\sum_{i_R=1}^{N_{R}} w_{b,i_R}\bm{n}_{i_R} \cdot \bm{\tau}\\
\displaystyle\frac{1}{|T_R|} \displaystyle\sum_{i_R=1}^{N_{R}} z_{b,i_R}\bm{n}_{i_R} \cdot \bm{\tau}
\end{bmatrix}
-
\begin{bmatrix}
\displaystyle\frac{1}{|T_L|} \displaystyle\sum_{i_L=1}^{N_{L}} w_{b,i_L}\bm{n}_{i_L} \cdot \bm{\tau}\\
\displaystyle\frac{1}{|T_L|} \displaystyle\sum_{i_L=1}^{N_{L}} z_{b,i_L}\bm{n}_{i_L} \cdot \bm{\tau}
\end{bmatrix}\rangle \label{EdgeMatrix}\\
&=&h_e^\gamma |e|
\begin{bmatrix} W & Z \end{bmatrix}
\left[
\begin{bmatrix*}[l]
\cellcolor{red!05}~~\mathbb{Q} &\cellcolor{red!05} & & \\
\cellcolor{red!05}~~&\cellcolor{red!05}&& \\
\cellcolor{red!00}~~&&\cellcolor{red!05}~~\mathbb{Q}&\cellcolor{red!05} \\
\cellcolor{red!00}~~&&\cellcolor{red!05}~~&\cellcolor{red!05} \\
\end{bmatrix*}
\right.
\begin{bmatrix}
   U^T\\
   V^T
\end{bmatrix} , \nonumber
\end{eqnarray}
where
$U=\begin{bmatrix}u_{b,i_R}, & u_{b,i_L}\end{bmatrix}$,
$V=\begin{bmatrix}v_{b,i_R}, & v_{b,i_L}\end{bmatrix}$,
$W=\begin{bmatrix}w_{b,j_R}, & w_{b,j_L}\end{bmatrix}$,
and $Z=\begin{bmatrix}z_{b,j_R}, & z_{b,j_L}\end{bmatrix}$. This completes the derivation of the matrix representation \eqref{EQ:EdgeSM:01} and \eqref{EQ:EdgeSM:01:200}. Note that the degree of freedom on the common edge $e$ was treated as independent local variables in the matrix formula \eqref{EQ:EdgeSM:01} and \eqref{EQ:EdgeSM:01:200}; the shared nature of the edge unknown can be easily taken care of in the matrix assembling process through a local-to-global map of the local variables.
\end{proof}

\section{An equivalent algorithm based on the primal formulation}\label{Section:4}
A weak formulation for \eqref{elasbdy}-\eqref{elasbcn} in the primal form is given by seeking $\bm{u} \in [H^1(\Omega)]^d$ satisfying $\bm{u}=\bm{g}$ on $\Gamma_D$ such that
\begin{eqnarray}
2(\mu \varepsilon(\bm{u}), \varepsilon(\bm{v})) + (\lambda \nabla\cdot\bm{u}, \nabla\cdot\bm{v}) =(\bm{f},\bm{v}) + \langle \varrho, \bm{v}\rangle_{\Gamma_N}, \quad \forall \; \bm{v}\in [H^1_{0,\Gamma_D}(\Omega)]^d.
\end{eqnarray}
The primitive $P_0$ finite element method for the \eqref{elasbdy}-\eqref{elasbcn} {\em seeks $\bm{u}_b \in [W_h(\T_h)]^d$  with $\bm{u}_b=Q_b^{(0)}(\bm{g})$ on $\Gamma_D$ such that}
\begin{eqnarray}\label{SWG.Primal}
a_s(\bm{u}_b, \bm{v}_b)=(\bm{f}, \S(\bm{v}_b)) + \langle \varrho, \bm{v}_b\rangle_{\Gamma_N}, \quad \forall \;\bm{v}_b \in [W_h^0(\T_h)]^d,
\end{eqnarray}
where the bilinear form is given by
\begin{eqnarray}
 a_s(\bm{u}_b, \bm{v}_b) &=&  S(\bm{u}_b,\bm{v}_b) + \sum_{T\in \T_h}2(\mu \varepsilon_w(\bm{u}_b),\varepsilon_w(\bm{v}_b))_T + (\lambda \nabla_w \cdot \bm{u}_b, \nabla_w \cdot \bm{v}_b)_T. \nonumber
\end{eqnarray}

\begin{lemma}
The $P_0$ finite element algorithms \eqref{SWG.Primal} and \eqref{equ.elasticity-Mix-SWG} are equivalent in the sense that the solution $\bm{u}_b$ from the two algorithms are identical to each other.
\end{lemma}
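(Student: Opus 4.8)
The plan is to eliminate the pseudo-pressure $p$ from the mixed system \eqref{equ.elasticity-Mix-SWG} and show that the resulting displacement equation coincides exactly with the primal scheme \eqref{SWG.Primal}. The key observation is that the second equation of \eqref{equ.elasticity-Mix-SWG}, namely $b(\bm{u}_b,q)-d(p,q)=0$ for all $q\in P_0(\T_h)$, expresses $p$ explicitly in terms of $\bm{u}_b$. Since $d(p,q)=\lambda^{-1}(p,q)$ and both $p$ and $q$ are piecewise constant, this relation reads $(\nabla_w\cdot\bm{u}_b,q)=\lambda^{-1}(p,q)$ for all $q\in P_0(\T_h)$, which forces $p=\lambda\,\nabla_w\cdot\bm{u}_b$ on each element $T$ (the weak divergence $\nabla_w\cdot\bm{u}_b$ is itself a constant on $T$ by the definition \eqref{DefWGpoly}, so the pressure is determined pointwise, not merely in an $L^2$-projection sense).

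First I would substitute $p=\lambda\,\nabla_w\cdot\bm{u}_b$ into the first equation of \eqref{equ.elasticity-Mix-SWG}. The coupling term becomes
\[
b(\bm{v}_b,p)=(\nabla_w\cdot\bm{v}_b,p)=\lambda\,(\nabla_w\cdot\bm{u}_b,\nabla_w\cdot\bm{v}_b)=\sum_{T\in\T_h}(\lambda\,\nabla_w\cdot\bm{u}_b,\nabla_w\cdot\bm{v}_b)_T.
\]
Adding this to $a(\bm{u}_b,\bm{v}_b)=2(\mu\,\varepsilon_w(\bm{u}_b),\varepsilon_w(\bm{v}_b))+S(\bm{u}_b,\bm{v}_b)$ yields precisely the primal bilinear form $a_s(\bm{u}_b,\bm{v}_b)$, while the right-hand sides $(\bm{f},\S(\bm{v}_b))+\langle\bm{\varrho},\bm{v}_b\rangle_{\Gamma_N}$ are identical in both schemes. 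Hence any solution of the mixed formulation, after discarding $p$, satisfies the primal equation \eqref{SWG.Primal}, and the two boundary conditions $\bm{u}_b|_{\Gamma_D}=Q_b^{(0)}\bm{g}$ match as well.

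For the converse, I would start from a solution $\bm{u}_b$ of \eqref{SWG.Primal}, \emph{define} $p:=\lambda\,\nabla_w\cdot\bm{u}_b\in P_0(\T_h)$, and verify that the pair $(\bm{u}_b,p)$ satisfies both equations of \eqref{equ.elasticity-Mix-SWG}. The second equation holds by construction of $p$, and substituting this $p$ into the first equation reproduces \eqref{SWG.Primal} term by term. This establishes a bijection between solutions of the two schemes with identical displacement component $\bm{u}_b$, which is exactly the asserted equivalence.

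The computation here is essentially bookkeeping, so there is no deep obstacle; the only point requiring a little care is confirming that the relation $(\nabla_w\cdot\bm{u}_b,q)=\lambda^{-1}(p,q)$ for all $q\in P_0(\T_h)$ genuinely pins down $p=\lambda\,\nabla_w\cdot\bm{u}_b$ as a function rather than only up to an orthogonality condition. This is immediate because $\nabla_w\cdot\bm{u}_b$ and $p$ both lie in the finite-dimensional test space $P_0(\T_h)$ over which the identity is tested, so the $L^2$-inner-product identity on $P_0(\T_h)$ forces equality elementwise. I would state this observation explicitly, as it is the linchpin that makes the elimination of $p$ exact rather than approximate.
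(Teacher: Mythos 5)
Your proposal is correct and follows essentially the same route as the paper: both exploit that $\nabla_w\cdot\bm{u}_b$ and $p$ lie in $P_0(\T_h)$ so the second equation of \eqref{equ.elasticity-Mix-SWG} pins down $p=\lambda\,\nabla_w\cdot\bm{u}_b$ exactly, and substitution into the first equation reproduces the primal form $a_s(\cdot,\cdot)$. The only cosmetic difference is that you verify the converse direction explicitly (defining $p$ from a primal solution), whereas the paper concludes by appealing to solution uniqueness; either way the argument is sound.
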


\begin{proof}
Assume that ${\tilde{\bm{u}}}_b$ and $\tilde p_h$ solves \eqref{equ.elasticity-Mix-SWG}.
Note that the second equation of \eqref{equ.elasticity-Mix-SWG} can be rewritten as
\begin{eqnarray}
(\nabla_w \cdot \tilde{\bm{u}}_b,q)-\lambda^{-1}(\tilde{p}_h,q) =0, \qquad\forall q \in P_0(\T_h).
\end{eqnarray}
Since $\nabla_w \cdot \tilde{\bm{u}}_b\in P_0(\T_h)$, then $\tilde p_h$ can be solved as
\begin{eqnarray}\label{ph}
\tilde{p}_h = \lambda \nabla_w \cdot \tilde{\bm{u}}_b.
\end{eqnarray}
Substituting \eqref{ph} into the first equation of \eqref{equ.elasticity-Mix-SWG} yields exactly the same equation as \eqref{SWG.Primal}.
From the solution uniqueness, the solution $\tilde{\bm{u}}_h$ of \eqref{equ.elasticity-Mix-SWG} is then identical to the numerical solution of the Algorithm \eqref{SWG.Primal}.
\end{proof}

\medskip
The reformulation of the elasticity problem as a generalized Stokes system with nonzero divergence constraint may lead to numerical approximations that are locking-free as $\lambda \rightarrow \infty$, provided that some necessary stability conditions are satisfied. In Section \ref{Section:Stability}, we shall establish these
stability conditions for the mixed weak Galerkin algorithm
\eqref{equ.elasticity-Mix-SWG} so that the Algorithm \eqref{SWG.Primal} is locking-free from the equivalence of \eqref{equ.elasticity-Mix-SWG} and the primal formulation \eqref{SWG.Primal}.

\section{Stability and well-posedness}\label{Section:Stability}
This section is devoted to a stability analysis for the mixed weak Galerkin finite element method \eqref{equ.elasticity-Mix-SWG} by establishing an {\it inf-sup} condition necessary in the theory of Babu\u{s}ka \cite{babuska} and Brezzi \cite{brezzi} for saddle-point problems. The result provides a locking-free convergence for the numerical scheme \eqref{SWG.Primal} for the linear elasticity problem in the displacement formulation.

In the finite element space $[W_h(\T_h)]^d$, we introduce the following semi-norm:
\begin{equation}\label{EQ:Semi-norm}
\3bar \bm{u}_b\3bar^2 =\displaystyle \sum_{T \in T_h } \|\varepsilon_w(\bm{u}_b)\|^2_T + h^{-1}_{T}\|Q_b^{(0)}\S(\bm{u}_b) - \bm{u}_b \|^2_{\partial T} + \sum_{e\in \E_h} h^\gamma_e[\nabla_{w,\bm{\tau}} \bm{u}_b]_e^2.
\end{equation}

\begin{lemma}\label{LEMMA:October:20:01}
The semi-norm $\3bar \cdot \3bar$, as defined in \eqref{EQ:Semi-norm}, is a norm in the linear subspace $[W_h^0(\T_h)]^d$.
\end{lemma}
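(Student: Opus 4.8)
The plan is to verify positive definiteness, since homogeneity and the triangle inequality for $\3bar \cdot \3bar$ are immediate from the fact that each of the three terms in \eqref{EQ:Semi-norm} is the square of a seminorm. So I would fix $\bm{u}_b \in [W_h^0(\T_h)]^d$ with $\3bar \bm{u}_b \3bar = 0$ and show $\bm{u}_b = 0$. Vanishing of the three nonnegative contributions separately gives, on every $T \in \T_h$: (i) $\varepsilon_w(\bm{u}_b) = 0$; (ii) $Q_b^{(0)}\S(\bm{u}_b) = \bm{u}_b$ on $\pT$, which by \eqref{Def-ST-poly} is equivalent to $\S(\bm{u}_b)(M_i) = \bm{u}_{b,i}$ at every edge midpoint; and (iii) $[\nabla_{w,\bm{\tau}}\bm{u}_b]_e = 0$ on every $e \in \E_h$, with the convention that on a boundary edge this is the one-sided value.

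The crux is to convert the weak quantities into classical ones for the genuine linear field $\bm{w}_T := \S(\bm{u}_b) \in [P_1(T)]^d$. Using (ii), I would first show $\nabla_w \bm{u}_b = \nabla \bm{w}_T$ on each $T$: taking a scalar component $v_b$ of $\bm{u}_b$ with $w := \S(v_b)$, midpoint quadrature integrates the linear function $w(\bm{\phi}\cdot\bn)$ exactly on each edge for any constant vector $\bm{\phi}$, so combining $\sum_i w(M_i)(\bm{\phi}\cdot\bn_i)|e_i| = \int_{\pT} w(\bm{\phi}\cdot\bn)\,ds = (\nabla w,\bm{\phi})_T$ with $w(M_i)=v_{b,i}$ and the identity \eqref{DefWGpoly-new} yields $(\nabla_w v_b,\bm{\phi})_T = (\nabla w,\bm{\phi})_T$, hence $\nabla_w v_b = \nabla w$ componentwise and $\nabla_w\bm{u}_b = \nabla\bm{w}_T$. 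Consequently $\varepsilon_w(\bm{u}_b) = \varepsilon(\bm{w}_T)$, and (i) forces $\varepsilon(\bm{w}_T)=0$, i.e. $\bm{w}_T \in RM(T)$ is a rigid motion $\bm{w}_T(x) = a_T + \eta_T x$ with $\eta_T \in so(d)$.

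Next I would use (iii) to remove the rotation. Since $\nabla_{w,\bm{\tau}}\bm{u}_b = \nabla\bm{w}_T\cdot\bm{\tau} = \eta_T\bm{\tau}$ is constant on each edge of $T$, the jump condition across an interior edge shared by $T_1,T_2$ reads $(\eta_{T_1}-\eta_{T_2})\bm{\tau}=0$, while on a boundary edge it reads $\eta_T\bm{\tau}=0$. In two dimensions a nonzero skew matrix sends $\bm{\tau}\neq 0$ to a nonzero vector, so these identities force $\eta_{T_1}=\eta_{T_2}$ across interior edges and $\eta_T = 0$ along the boundary; propagating through the connected dual graph of $\T_h$ gives $\eta_T = 0$ for every $T$ (an analogous argument applies for $d>2$, using that the edge tangents of a single polytope span $\mathbb{R}^d$). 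Hence $\bm{w}_T \equiv a_T$ is constant on each element, and (ii) gives $\bm{u}_{b,i} = a_T$ on all edges of $T$.

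Finally, the single-valuedness of $\bm{u}_b$ on interior edges forces $a_{T_1}=a_{T_2}$ whenever $T_1,T_2$ share an edge, so by connectivity all $a_T$ equal one constant $a$; the Dirichlet condition $\bm{u}_b|_{\Gamma_D}=0$ (as $\bm{u}_b \in [W_h^0(\T_h)]^d$) then gives $a = 0$, whence $\bm{u}_b = 0$. I expect the main obstacle to be the step identifying $\nabla_w\bm{u}_b$ with $\nabla\S(\bm{u}_b)$ under condition (ii), together with the essential use of the tangential stabilizer in (iii): this is precisely the mechanism that controls the rigid rotations and rescues the otherwise rigid-motion-unstable $P_1/P_0$ pairing, whereas the connectivity/propagation and the closing Dirichlet argument are routine.
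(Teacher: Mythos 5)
Your proof is correct and follows essentially the same route as the paper's: extract the three vanishing conditions, identify $\nabla_w\bm{u}_b=\nabla\S(\bm{u}_b)$ through midpoint exactness of the edge quadrature, conclude $\S(\bm{u}_b)\in RM(T)$ on each element, and then use the tangential jump terms together with the midpoint continuity and the Dirichlet data to eliminate the resulting rigid motion. The only (immaterial) difference is bookkeeping at the last stage: you annihilate the rotations $\eta_T$ starting from the one-sided boundary-edge terms and propagate zero through the dual graph before matching the constants, whereas the paper matches both rotations and translations across interior edges to assemble a single global rigid motion in $RM(\Omega)$ and then kills it with $\S(\bm{u}_b)|_{\Gamma_D}=0$.
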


\begin{proof}
We shall only verify the positivity property for $\3bar \cdot \3bar$.
To this end, assume $\3bar \bm{u}_b \3bar = 0$ for some $ \bm{u}_b \in [W_h^0(\T_h)]^d$.
It follows that
\begin{eqnarray}\label{EQ:Oct:19:601}
&&\varepsilon_w(\bm{u}_b)=0, \mbox { in } T,\\
&&Q_b^{(0)} \S(\bm{u}_b)-\bm{u}_b =0,  \mbox{ on } \partial T,\label{EQ:Oct:19:602}\\
&&[\nabla_{w,\bm{\tau}} \bm{u}_b]\big|_e = 0, \mbox{ on each edge } e\subset\pT. \label{EQ:Oct:19:603}
\end{eqnarray}
From \eqref{EQ:Oct:19:602}, we have $( \S(\bm{u}_b)(M_i) - \bm{u}_{b,i})|e_i|=\bm{0}$ at each edge center. Thus
\begin{eqnarray}
&&~~~~~~\displaystyle \nabla \S(\bm{u}_b)=\frac{1}{|T|}\sum_{i=1}^N \S(\bm{u}_b)(M_i) \otimes\bm{n}_i |e_i|=\frac{1}{|T|}\sum_{i=1}^N \bm{u}_{b,i}\otimes\bm{n}_i |e_i|=\nabla_w \bm{u}_b, \\
&&~~~~~~\displaystyle \nabla_{\bm{\tau}}\S(\bm{u}_b)=\frac{1}{|T|}\sum_{i=1}^N |e_i| \S(\bu_{b})(M_i) \otimes \bm{n}_i \cdot \bm{\tau}=\frac{1}{|T|}\sum_{i=1}^N |e_i| \bu_{b,i} \otimes \bm{n}_i \cdot \bm{\tau} = \nabla_{w,\bm{\tau}} \bm{u}_b,
\end{eqnarray}
where $\bm{x}\otimes\bm{y} = \{x_iy_j\}_{2\times 2}$ for any two vectors $\bm{x}=(x_1,x_2)$ and $\bm{y}=(y_1,y_2)$ in 2D.
That is $\nabla \S(\bm{u}_b) =\nabla_w \bm{u}_b$, which gives $\varepsilon_w(\S(\bm{u}_b))=\varepsilon_w(\bm{u}_b)=0$ so that $\S(\bm{u}_b)\in RM(T)\subset [P_1(T)]^d$.
Also from $\S(\bm{u}_b)(M_i)= \bm{u}_{b,i}$ and
$[\nabla_{\bm{\tau}} \S(\bm{u}_b)]= [\nabla_{w,\bm{\tau}} \bm{u}_b]=0$ on each edge, we see that $\S(\bm{u}_b)$ is continuous across each edge of the partition $\T_h$ and $\S(\bm{u}_b)\big|_{\Gamma_D} = 0$.

For any interior edge $e\in\E_h$, let $T_1$ and $T_2$ be the two elements that share $e$ in common. As $\S(\bm{u}_b)\in RM(T)$, it follows that $\S(\bm{u}_b)\big|_{T_1}$ and $\S(\bm{u}_b)\big|_{T_2}$ have the following representation:
\begin{eqnarray}\label{EQ:October:20:101}
\S(\bm{u}_b)\big|_{T_1}= \bm{a} + \bm{\eta} x,\\
\S(\bm{u}_b)\big|_{T_2}= \bm{b} + \bm{\xi} x,\label{EQ:October:20:102}
\end{eqnarray}
where $\bm{a}, \bm{b}\in\mathbb{R}^d $ and $\bm{\eta},  \bm{\xi} \in \mathbb{R}^{d\times d}$ are skew-symmetric. We claim that the following holds true:
\begin{equation}\label{EQ:October:20:100}
\bm{a}=\bm{b},\ \ \bm{\eta}=\bm{\xi}.
\end{equation}
If so, then we have $\S(\bm{u}_b)\in RM(\Omega)$ and $\S(\bm{u}_b)\big|_{\Gamma_D} = 0$. It follows that $\S(\bm{u}_b)\equiv 0$ in $\Omega$, and so does $\bm{u}_b\equiv 0$.

To verify \eqref{EQ:October:20:100}, consider the case of $d=2$ for simplicity of presentation. From the representation \eqref{EQ:October:20:101}-\eqref{EQ:October:20:102} we obtain
\begin{eqnarray}
\nabla_{\bm{\tau}} \S(\bm{u}_b)_{T_1} - \nabla_{\bm{\tau}} \S(\bm{u}_b)_{T_2}
&=&
\nabla_{\bm{\tau}}
\begin{bmatrix}
a_x + \eta_2 y \\
a_y - \eta_2 x \\
\end{bmatrix}
-
\nabla_{\bm{\tau}}
\begin{bmatrix}
b_x + \xi_2 y \\
b_y - \xi_2 x \\
\end{bmatrix} \nonumber \\
&=&
\begin{bmatrix}
~~ \eta_2 \bm{\tau}_y\\
-\eta_2 \bm{\tau}_x
\end{bmatrix}
 -
 \begin{bmatrix}
~~ \xi_2\bm{\tau}_y,\\
-\xi_2\bm{\tau}_x
\end{bmatrix},\nonumber
\end{eqnarray}
where $\bm{\tau}=(\bm{\tau}_x, \bm{\tau}_y)^t
$ is the unit tangent direction of the edge $e$, and
$$
\bm{\eta} =\begin{bmatrix}
0 & \eta_2\\
-\eta_2 & 0
\end{bmatrix},\ \  \bm{\xi} =\begin{bmatrix}
0 & \xi_2\\
-\xi_2 & 0
\end{bmatrix}
$$
are two skew-symmetric matrices in 2D. It follows that $\eta_2=\xi_2$, which together with $\S(\bm{u}_b)\big|_{T_1\cap e}=\S(\bm{u}_b)\big|_{T_2\cap e}$ leads to $a_x=b_x$ and $a_y=b_y$.  This completes \eqref{EQ:October:20:100} and furthermore the proof of the lemma.
\end{proof}

\begin{lemma}\label{lem:ellip}
There exist positive constants $\alpha_1$ and $\alpha_2$ such that
\begin{eqnarray}\label{elliptic}
\alpha_1 \3bar \bm{u}_b \3bar^2 \leq a(\bm{u}_b,\bm{u}_b) \leq \alpha_2 \3bar \bm{u}_b \3bar^2, \; \quad \forall \;\bm{u}_b \in [W_h^0(\T_h)]^2.
\end{eqnarray}
\end{lemma}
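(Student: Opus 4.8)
The plan is to exploit a structural feature that makes this coercivity estimate essentially elementary: unlike standard coercivity proofs for elasticity discretizations, the norm $\3bar\cdot\3bar$ in \eqref{EQ:Semi-norm} is built directly from the weak strain $\varepsilon_w$ rather than from the full weak gradient, so that \emph{no} Korn-type inequality is needed. First I would set $\bm{v}_b=\bm{u}_b$ in the definitions of $a(\cdot,\cdot)$, $S(\cdot,\cdot)$ and $S_T(\cdot,\cdot)$ to produce the exact identity
\begin{equation*}
a(\bm{u}_b,\bm{u}_b)=2(\mu\varepsilon_w(\bm{u}_b),\varepsilon_w(\bm{u}_b)) + \sum_{T\in\T_h} h^{-1}\|Q_b^{(0)}\S(\bm{u}_b)-\bm{u}_b\|_{\partial T}^2 + \kappa\sum_{e\in\E_h} h_e^\gamma\,[\nabla_{w,\bm{\tau}}\bm{u}_b]_e^2 .
\end{equation*}
Each of the three nonnegative groups on the right corresponds term-for-term to one of the three groups in $\3bar\bm{u}_b\3bar^2$, so the task reduces to matching scalar weights.

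Second, I would convert this identity into the two-sided bound \eqref{elliptic} by a purely term-by-term comparison. Assuming the Lam\'e coefficient satisfies $0<\mu_0\le\mu\le\mu_1$ on $\Omega$, the strain contribution obeys $2\mu_0\|\varepsilon_w(\bm{u}_b)\|_T^2\le 2(\mu\varepsilon_w(\bm{u}_b),\varepsilon_w(\bm{u}_b))_T\le 2\mu_1\|\varepsilon_w(\bm{u}_b)\|_T^2$, which matches the first group. The stabilizer group is compared using the shape-regularity of $\T_h$, which yields $c_0\,h\le h_T\le h$ and hence the equivalence of the scalings $h^{-1}$ and $h_T^{-1}$ up to a fixed constant; this reconciles the definition of $S_T$ (written with $h^{-1}$) against the $h_T^{-1}$ appearing in the norm. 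The edge group is already identical up to the factor $\kappa$. Taking $\alpha_1=\min\{2\mu_0,\,c_1,\,\kappa\}$ and $\alpha_2=\max\{2\mu_1,\,c_2,\,\kappa\}$, where $c_1,c_2$ absorb the mesh-regularity constants from the $h$ versus $h_T$ comparison, delivers \eqref{elliptic}.

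The analysis carries no genuine obstacle beyond careful bookkeeping of these constants; the single point requiring attention is the tangential-jump stabilizer. For $\kappa>0$, the identity above shows that $a(\cdot,\cdot)$ and $\3bar\cdot\3bar^2$ contain the same edge term, and the equivalence is immediate. For $\kappa=0$, however, that term is absent from $a(\cdot,\cdot)$ while still present in $\3bar\cdot\3bar^2$, so the lower bound in \eqref{elliptic} can be claimed only after the same term is dropped from the norm (which is precisely the situation on true polytopal partitions, per the discussion following \eqref{eq:stabilizer}). I would therefore prove \eqref{elliptic} under the standing assumption $\kappa>0$, ensuring that the norm and the bilinear form are assembled from exactly the same three ingredients so that the equivalence collapses to the elementary scalar inequalities above. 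Finally, I note that the lower bound, combined with Lemma \ref{LEMMA:October:20:01}, shows that $a(\cdot,\cdot)$ is an inner product on $[W_h^0(\T_h)]^2$, which is the form in which this result will be invoked in the subsequent inf-sup analysis.
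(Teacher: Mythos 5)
Your proof is correct and takes essentially the same approach as the paper: expand $a(\bm{u}_b,\bm{u}_b)$ into the three nonnegative groups that make up $\3bar \bm{u}_b\3bar^2$ (weak strain, boundary stabilizer, tangential jump) and compare coefficients termwise, with the lower bound valid only when $\mu$ and $\kappa$ are bounded away from zero. You are in fact slightly more careful than the paper, which silently identifies the global scaling $h^{-1}$ in $S_T$ with the local $h_T^{-1}$ appearing in \eqref{EQ:Semi-norm} and leaves the restriction $\kappa>0$ implicit, whereas you make both points explicit (noting only that the mesh comparison $c_0 h \le h_T \le h$ is really a quasi-uniformity, rather than shape-regularity, assumption).
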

\begin{proof}
By definition, we have
\begin{eqnarray*}
a(\bm{u}_b,\bm{u}_b)
&=&2(\mu \varepsilon_w(\bm{u}_b),\varepsilon_w(\bm{u}_b)) + S(\bm{u}_b,\bm{u}_b)\\
&=&\sum_{T\in \T_h}2{\mu} \|\varepsilon_w(\bm{u}_b)\|^2_T + h^{-1}_{T}\|Q_b^{(0)}\S(\bm{u}_b) - \bm{u}_b \|^2_{\partial T}
+ \kappa\sum_{e\in \E_h} h_e^\gamma \|[\nabla_{w,\bm{\tau}} \bm{u}_b]\|_e^2.
\end{eqnarray*}
Since ${\mu}$ and $\kappa$ are bounded from below and above, it is not hard to see that \eqref{elliptic} is verified for all $\bm{u}_b \in [W_h^0(\T_h)]^2$.
\end{proof}

\begin{lemma}\label{lem:inf_sup}
There exists a constant $\beta >0$ such that
\begin{eqnarray}
\sup_{\bm{u}_b \in [W_h^0(\T_h)]^2, \bm{u}_b\neq 0}\frac{b(\bm{u}_b,q)}{\3bar \bm{u}_b\3bar}
\geq \beta \| q \|_0, \quad \forall \; q \in P_0(\T_h).
\end{eqnarray}
\end{lemma}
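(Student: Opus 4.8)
The plan is to verify the inf-sup condition by constructing a Fortin-type operator: for each $q\in P_0(\T_h)$ I would exhibit a discrete displacement $\bm{v}_b\in[W_h^0(\T_h)]^2$ satisfying $b(\bm{v}_b,q)=\|q\|_0^2$ together with the stability bound $\3bar\bm{v}_b\3bar\le C\|q\|_0$. Granting these, the conclusion is immediate, since then $\sup_{\bm{u}_b\neq 0} b(\bm{u}_b,q)/\3bar\bm{u}_b\3bar \ge b(\bm{v}_b,q)/\3bar\bm{v}_b\3bar \ge \|q\|_0^2/(C\|q\|_0)$, giving the claim with $\beta=C^{-1}$. The starting point is the continuous surjectivity of the divergence: because $\Gamma_D$ has positive $(d-1)$-measure, the map $\nabla\cdot:[H^1_{0,\Gamma_D}(\Omega)]^2\to L^2(\Omega)$ is onto, so for the given $q$ there is $\bm{v}\in[H^1_{0,\Gamma_D}(\Omega)]^2$ with $\nabla\cdot\bm{v}=q$ and $\|\bm{v}\|_1\le C\|q\|_0$ (see \cite{Girault, Brezzi-Fortin}). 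I would then take $\bm{v}_b:=Q_b^{(0)}\bm{v}$, the edge-wise $L^2$ projection of the trace of $\bm{v}$. Since $\bm{v}$ has a single-valued trace across interior edges and vanishes on $\Gamma_D$, the averaged quantity $\bm{v}_b$ is single-valued on interior edges and vanishes on $\Gamma_D$, so $\bm{v}_b\in[W_h^0(\T_h)]^2$.

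The algebraic heart is that $Q_b^{(0)}$ commutes with the divergence at the level of the weak operator. Using $\bm{v}_{b,i}=|e_i|^{-1}\int_{e_i}\bm{v}\,ds$ and the definition \eqref{DefWGpoly}, on each $T$ one computes $\nabla_w\cdot\bm{v}_b=\frac{1}{|T|}\sum_{i=1}^N\big(\int_{e_i}\bm{v}\,ds\big)\cdot\bm{n}_i=\frac{1}{|T|}\int_{\partial T}\bm{v}\cdot\bm{n}\,ds=\frac{1}{|T|}\int_T\nabla\cdot\bm{v}\,dx$, so that $\nabla_w\cdot\bm{v}_b$ is exactly the $L^2(T)$ projection of $\nabla\cdot\bm{v}$ onto the constants. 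Because $q$ is constant on each $T$, this gives $(\nabla_w\cdot\bm{v}_b,q)_T=(\nabla\cdot\bm{v},q)_T=(q,q)_T$, and summing over $T\in\T_h$ yields $b(\bm{v}_b,q)=\|q\|_0^2$, as required. The same computation shows $\nabla_w\bm{v}_b$ is the constant $L^2(T)$ projection of $\nabla\bm{v}$, whence $\|\varepsilon_w(\bm{v}_b)\|_T\le\|\varepsilon(\bm{v})\|_T\le\|\nabla\bm{v}\|_T$, controlling the first term of $\3bar\cdot\3bar$ by $\|\bm{v}\|_1$.

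It remains to bound the stabilizer and tangential-jump terms, and this is the main obstacle. For the stabilizer term I would let $\ell\in[P_1(T)]^2$ be the $L^2(T)$ projection of $\bm{v}$ and exploit that $\S\circ Q_b^{(0)}$ reproduces $[P_1(T)]^2$, since the edge average of a linear function equals its midpoint value; by linearity this reduces $Q_b^{(0)}\S(\bm{v}_b)-\bm{v}_b$ to $Q_b^{(0)}\S\big(Q_b^{(0)}(\bm{v}-\ell)\big)-Q_b^{(0)}(\bm{v}-\ell)$. The discrete least-squares nature of $\S$ in \eqref{Def.extension} gives the stability $\|Q_b^{(0)}\S(w_b)\|_{\partial T}\le\|w_b\|_{\partial T}$, so that $h_T^{-1}\|Q_b^{(0)}\S(\bm{v}_b)-\bm{v}_b\|^2_{\partial T}\le C h_T^{-1}\|\bm{v}-\ell\|^2_{\partial T}\le C\|\nabla\bm{v}\|_T^2$, using a trace inequality and the approximation bound $\|\bm{v}-\ell\|_T+h_T\|\nabla(\bm{v}-\ell)\|_T\le C h_T\|\nabla\bm{v}\|_T$. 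For the tangential-jump term, $\nabla_{w,\bm{\tau}}\bm{v}_b$ is on each element the cell-average of $\nabla\bm{v}$ contracted with $\bm{\tau}$; bounding the jump of two adjacent cell-averages by $|T|^{-1}\|\nabla\bm{v}\|_T^2$ and using $h_e^\gamma|e|\sim|T|$ under shape regularity gives $\sum_{e\in\E_h} h_e^\gamma\|[\nabla_{w,\bm{\tau}}\bm{v}_b]\|_e^2\le C\|\nabla\bm{v}\|^2$. Collecting the three estimates produces $\3bar\bm{v}_b\3bar\le C\|\bm{v}\|_1\le C\|q\|_0$, which completes the argument. The delicate points are precisely the $L^2(\partial T)$-stability and polynomial-reproduction properties of the discrete extension $\S$ and the edge-to-element scaling that makes the tangential stabilizer harmless; these are where shape regularity and the approximation theory for $Q_b^{(0)}$ must be invoked carefully.
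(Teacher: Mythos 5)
Your proposal is correct and takes essentially the same route as the paper: both solve the auxiliary problem $\nabla\cdot\bm{\varphi}=q$, $\bm{\varphi}\in[H^1_{0,\Gamma_D}(\Omega)]^2$, choose $\bm{v}_b=Q_b^{(0)}\bm{\varphi}$, use the projection identity for the weak divergence to obtain $b(\bm{v}_b,q)=\|q\|_0^2$ exactly, and then bound $\3bar\bm{v}_b\3bar\le C\|q\|_0$ term by term (strain part via $\nabla_w\bm{v}_b=\mathbb{Q}_h^{(0)}\nabla\bm{\varphi}$, stabilizer via the least-squares property of $\S$ together with trace and approximation estimates, tangential jumps via shape-regular edge-to-element scaling). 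The only differences are cosmetic: you phrase the stabilizer bound through $P_1$-reproduction and $L^2(\partial T)$-stability of $\S\circ Q_b^{(0)}$, whereas the paper exploits the same orthogonality by inserting $Q_b^{(0)}\mathbb{Q}_h^{(1)}\bm{\varphi}$ into the inner product.
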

\begin{proof}
Consider the following auxiliary problem:
\begin{equation}\label{inf-sup.1}
\left\{
\begin{array}{rll}
\nabla\cdot\bm{\varphi}&=&q, \quad \text{ in } \Omega, \\
\bm{\varphi}&=&0, \quad \text{ on } \Gamma_D.
\end{array}
\right.
\end{equation}
The problem \eqref{inf-sup.1} has a solution $\bm{\varphi} \in [H_{0,\Gamma_D}^1 (\Omega)]^d$. By setting $\bm{u}_b = \bm{v}_q:=Q_b^{(0)} \bm{\varphi}$,
we arrive at
\begin{eqnarray*}
 b(\bm{u}_b, q)& = &\sum_{T\in\T_h} (\nabla_w \cdot \bm{v}_q,q)_T= \sum_{T\in\T_h} (\nabla_w \cdot(Q_b^{(0)}\bm{\varphi}), q)_T \\
 &=& \sum_{T\in\T_h} \langle Q_b^{(0)}\bm{\varphi} \cdot \bn , q\rangle_{\partial T}
  = \sum_{T\in\T_h} \langle \bm{\varphi} \cdot \bn , q\rangle_{\partial T}\\
 &=& \sum_{T\in\T_h} (\nabla \cdot\bm{\varphi} ,q)_T
  = \| q \|^2_0.
\end{eqnarray*}
Furthermore, it is not hard to see that there exist a constant $C_0$ such that
\begin{eqnarray}
\3bar \bm{v}_q \3bar  \leq C_0 \| q \|_0.
\end{eqnarray}
In fact, by definition
\begin{eqnarray}
\3bar \bm{v}_q \3bar^2=\displaystyle \sum_{T \in T_h } \|\varepsilon_w(\bm{v}_q)\|^2_T + h^{-1}_{T}\|Q_b^{(0)}\S(\bm{v}_q) - \bm{v}_q \|^2_{\partial T} + \sum_{e\in \E_h} h_e^\gamma\|[\nabla_{w,\bm{\tau}} \bm{v}_q]\|_e^2,
\end{eqnarray}
and the following estimates hold true:
\begin{eqnarray}
\sum_{T \in T_h } \|\varepsilon_w(\bm{v}_q)\|^2_T &=& \sum_{T \in T_h }\|\frac{1}{2}(\nabla_w \bm{v}_q + \nabla_w \bm{v}_q^T) \|^2_T \nonumber\\
& \leq &  C \sum_{T \in T_h }\|\nabla_w \bm{v}_q\|^2_T  \leq C \sum_{T \in T_h }\|\nabla_w Q_b^{(0)}\bm{\varphi}\|^2_T \nonumber\\
& \leq &  C \sum_{T \in T_h }\|\mbbQ_h^{(0)}\nabla \bm{\varphi}\|^2_T
  \leq   C \sum_{T \in T_h }\|\nabla \bm{\varphi}\|^2_T \nonumber\\
& \leq & C\| q \|_0, ~~~ \text{ by the regularity assumption for \eqref{inf-sup.1}},\label{inf-sup-equation.1}~~~~~~
\end{eqnarray}

\begin{eqnarray*}
~~ \|Q_b^{(0)}\S(\bm{v}_q)- \bm{v}_q\|^2_{\partial T}
&=&\langle Q_b^{(0)}\S(Q_b^{(0)} \bm{\varphi}) - Q_b^{(0)} \bm{\varphi}, Q_b^{(0)}\S(Q_b^{(0)} \bm{\varphi}) - Q_b^{(0)} \bm{\varphi}\rangle_{\partial T}\\
&=&\langle Q_b^{(0)}\S(Q_b^{(0)} \bm{\varphi}) - Q_b^{(0)} \bm{\varphi}, Q_b^{(0)}\mbbQ_h^{(1)} \bm{\varphi} - Q_b^{(0)} \bm{\varphi}\rangle_{\partial T} \nonumber\\
&\leq & \|Q_b^{(0)}\S(Q_b^{(0)} \bm{\varphi}) - Q_b^{(0)} \bm{\varphi}\|_{\partial T} \|Q_b^{(0)}\mbbQ_h^{(1)} \bm{\varphi} - Q_b^{(0)} \bm{\varphi}\|_{\partial T}, \nonumber
\end{eqnarray*}
which gives
\begin{eqnarray*}
\|Q_b^{(0)}\S(\bm{v}_q)- \bm{v}_q\|^2_{\partial T}
&\leq& \|Q_b^{(0)}(\mbbQ_h^{(1)} \bm{\varphi}) - Q_b^{(0)} \bm{\varphi}\|^2_{\partial T} \\
&\leq& \|\mbbQ_h^{(1)} \bm{\varphi} - \bm{\varphi}\|^2_{\partial T}\nonumber\\
&\leq&  C_1 (h^{-1}\|\mbbQ_h^{(1)} \bm{\varphi} - \bm{\varphi}\|^2_{T} + h\|\nabla (\mbbQ_h^{(1)}\bm{\varphi}-\bm{\varphi})\|^2_{T})\nonumber\\
&\leq&  C_2 h \|\bm{\varphi}\|_{1,T}^2, \nonumber
\end{eqnarray*}
so that
\begin{eqnarray}\label{equ.norm.2}
 h^{-1}\sum_{T\in\T_h}\|Q_b^{(0)}\S(\bm{v}_q)- \bm{v}_q\|^2_{\partial T} \leq C_3 \|\bm{\varphi}\|_1 \leq  C_3 \| q \|_0.
\end{eqnarray}
Moreover, we have
\begin{eqnarray}
\sum_{e\in \E_h} h^\gamma_e\|[\nabla_{w,\bm{\tau}} \bm{v}_q]\|_e^2
&\leq & \sum_{T\in \T_h}\sum_{e\in T} h^\gamma_e\|[\nabla_{w,\bm{\tau}} \bm{v}_q]\|_e^2\nonumber\\
&\leq & C \sum_{T\in \T_h}\|\nabla_{w,\bm{\tau}} \bm{v}_q\|_T^2 \nonumber\\
&\leq & C \|\nabla_w \bm{v}_q\|^2
 \leq  C \| q \|_0^2,  \label{inf-sup-equation.2}
\end{eqnarray}

By combining the estimates \eqref{inf-sup-equation.1}, \eqref{equ.norm.2}, and \eqref{inf-sup-equation.2}, we arrive at $\3bar \bm{v}_q \3bar  \leq C_0 \| q \|_0$.
Hence
\begin{eqnarray}
\sup_{\bm{u}_b \in [W_h^0(\T_h)]^2, \bm{u}_b\neq 0}\frac{b(\bm{u}_b,q)}{\3bar \bm{u}_b\3bar}
\geq \frac{b(\bm{v}_q,q)}{\3bar \bm{v}_q\3bar} = \frac{\| q \|^2_0}{\3bar \bm{v}_q\3bar} \geq C_0^{-1} \| q \|_0,
\end{eqnarray}
which gives the desired {\em inf-sup} condition.
\end{proof}

\begin{theorem}\label{uniqueness}
The numerical scheme \eqref{SWG.Primal} has one and only one solution for any positive stabilization parameter $\kappa>0$.
\end{theorem}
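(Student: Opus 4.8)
The plan is to exploit the symmetry and coercivity of the bilinear form $a_s(\cdot,\cdot)$ together with the two lemmas already established. First I would observe that \eqref{SWG.Primal} is a square linear algebraic system: once the boundary degrees of freedom on $\Gamma_D$ are fixed to $Q_b^{(0)}(\bm{g})$, the number of remaining unknowns (one constant per interior or Neumann edge, in each coordinate) equals the number of test functions spanning $[W_h^0(\T_h)]^d$. Hence existence and uniqueness are equivalent, and both follow once I show that the homogeneous problem admits only the trivial solution. Concretely, if $\bm{u}_b^{(1)}$ and $\bm{u}_b^{(2)}$ are two solutions, their difference $\bm{w}:=\bm{u}_b^{(1)}-\bm{u}_b^{(2)}$ lies in $[W_h^0(\T_h)]^d$ (the inhomogeneous Dirichlet data cancels) and satisfies $a_s(\bm{w},\bm{v}_b)=0$ for all $\bm{v}_b\in[W_h^0(\T_h)]^d$.

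The next step is to test this identity with $\bm{v}_b=\bm{w}$ and read off positive-definiteness. Since $\lambda\ge 0$, the divergence contribution is nonnegative, so that
\begin{equation*}
a_s(\bm{w},\bm{w}) = a(\bm{w},\bm{w}) + (\lambda\nabla_w\cdot\bm{w},\nabla_w\cdot\bm{w}) \ge a(\bm{w},\bm{w}) \ge \alpha_1\3bar\bm{w}\3bar^2,
\end{equation*}
where the last inequality is the lower bound in Lemma \ref{lem:ellip}. Thus $a_s(\bm{w},\bm{w})=0$ forces $\3bar\bm{w}\3bar=0$, and since $\3bar\cdot\3bar$ is a genuine norm on $[W_h^0(\T_h)]^d$ by Lemma \ref{LEMMA:October:20:01}, we conclude $\bm{w}=0$. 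This establishes uniqueness, and the square-system observation then yields existence. Equivalently, $a_s$ is a symmetric positive-definite form on the finite-dimensional space $[W_h^0(\T_h)]^d$, so the Lax--Milgram lemma applies directly to the lifted problem obtained by subtracting any fixed extension of $Q_b^{(0)}(\bm{g})$ into $[W_h(\T_h)]^d$.

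The point worth emphasizing --- and where the hypothesis $\kappa>0$ is indispensable --- is the passage from $a_s(\bm{w},\bm{w})=0$ to $\bm{w}=0$. The ellipticity constant $\alpha_1$ of Lemma \ref{lem:ellip} is bounded away from zero precisely because $\kappa$ is bounded below by a positive number; this is what lets the tangential-jump contribution $\sum_{e\in\E_h}h_e^\gamma\|[\nabla_{w,\bm{\tau}}\bm{w}]_e\|_e^2$ enter $a(\bm{w},\bm{w})$ with a positive weight. Without that term the quadratic form would control only $\varepsilon_w(\bm{w})$ and the stabilizer $Q_b^{(0)}\S(\bm{w})-\bm{w}$, and the rigid-motion rigidity argument of Lemma \ref{LEMMA:October:20:01} --- which crucially uses \eqref{EQ:Oct:19:603} to match the skew parts $\bm{\eta}=\bm{\xi}$ across interior edges --- would break down. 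Consequently the only genuine content beyond bookkeeping is already packaged in the norm property of Lemma \ref{LEMMA:October:20:01}; the remaining argument is the short coercivity-plus-square-system deduction above. I do not anticipate a substantive obstacle, only the need to record explicitly that $\lambda\ge 0$ prevents the divergence term from destroying coercivity and that $\kappa>0$ is exactly the condition making $\3bar\cdot\3bar$ a norm.
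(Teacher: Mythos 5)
Your proposal is correct and follows essentially the same route as the paper: reduce to the homogeneous (difference) problem via the square-system observation, test with the solution itself, invoke the coercivity of Lemma \ref{lem:ellip} (valid precisely because $\kappa>0$ and $\lambda>0$) to conclude $\3bar\bm{w}\3bar=0$, and finish with the norm property of Lemma \ref{LEMMA:October:20:01}. The paper writes out the quadratic form explicitly instead of citing Lemma \ref{lem:ellip}, but the substance is identical.
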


\begin{proof}
Since the number of equations equals the number of unknowns in \eqref{SWG.Primal}, it suffices to prove the solution uniqueness or to show that the homogeneous problems has only trivial solution. To this end, let $\bm{u}_b \in [W_h^0(\T_h)]^d$ be the solution of the numerical scheme \eqref{SWG.Primal} with homogeneous data $\bm{f}=\bm{0}$, $\bm{g}=0$, and $\bm{\varrho}=0$. By taking $\bm{v}_b =\bm{u}_b$ we arrive at
\begin{eqnarray}
 \sum_{T\in \T_h}(\mu \varepsilon_w(\bm{u}_b),\varepsilon_w(\bm{u}_b))_T +  \sum_{T\in \T_h}(\lambda \nabla_w \cdot \bm{u}_b, \nabla_w \cdot \bm{u}_b) + \kappa S(\bm{u}_b,\bm{u}_b)=\bm{0},
\end{eqnarray}
which leads to $\3bar \bm{u}_b\3bar =0$ and furthermore to $\bm{u}_b\equiv 0$ by using Lemma \ref{LEMMA:October:20:01}. This completes the proof of the theorem.
\end{proof}

\section{Preparation for error estimates}\label{Section:Preparation}
For any given integer $k\ge 0$, let $P_k(\T_h)$ be the space of piecewise polynomials of degree $k$. Denote by $\mathbb{Q}_h^{(k)} $ the $L^2$ projection operator onto $P_k(\T_h)$, $[P_k(\T_h)]^d$, or $[P_k(\T_h)]^{d\times d}$ wherever appropriate in the specific context. Recall that $Q_b^{(0)}$ is the $L^2$ projection operator onto either the finite element space $W_h(\T_h)$ or $[W_h(\T_h)]^d$, as appropriate.

\begin{lemma}
For any $\bm{v} \in [H^1(\Omega)]^d$, the following identities hold true for the projection operators $Q_b^{(0)}$ and $\mathbb{Q}_h^{(k)}$:
\begin{eqnarray}
&&\nabla_w\cdot(Q_b^{(0)} \bm{v}) = \mbbQ_h^{(0)}(\nabla \cdot \bm{v}), \label{projection.1}\\
&&\nabla_w(Q_b^{(0)} \bm{v}) = \mbbQ_h^{(0)}(\nabla\bm{v}),    \label{projection.2}\\
&&\nabla_w\times(Q_b^{(0)} \bm{v}) = \mbbQ_h^{(0)}(\nabla\times\bm{v}).    \label{projection.4}
\end{eqnarray}
Consequently, one has
\begin{eqnarray}
\varepsilon_w(Q_b^{(0)} \bm{v})=\mbbQ_h^{(0)}\varepsilon(\bm{v}). \label{projection.3}
\end{eqnarray}
\end{lemma}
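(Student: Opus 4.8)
The plan is to establish the gradient identity \eqref{projection.2} first as a master computation, then obtain \eqref{projection.1} and \eqref{projection.4} by the same three-step template, and finally deduce \eqref{projection.3} as an algebraic corollary. The whole argument rests on a single observation: on each straight edge $e_i$ the outward normal $\bn_i$ is constant, so any weight of the form $\bm{\phi}\cdot\bn$ with $\bm{\phi}$ a constant vector is piecewise constant on $\pT$, and therefore its $L^2(\pT)$ pairing commutes with the projection $Q_b^{(0)}$ onto edge-wise constants.

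For \eqref{projection.2} I would argue as follows. Since $\nabla_w(Q_b^{(0)}\bm{v})$ and $\mbbQ_h^{(0)}(\nabla\bm{v})$ are both constant (matrix-valued) on each $T$, it suffices to test them against an arbitrary constant $\bm{\phi}$ and show the resulting scalars agree; working one component row at a time lets me use the scalar weak-gradient relation \eqref{DefWGpoly-new}. The first step writes $(\nabla_w(Q_b^{(0)} v),\bm{\phi})_T=\langle Q_b^{(0)} v,\bm{\phi}\cdot\bn\rangle_{\pT}$. The key second step replaces $Q_b^{(0)} v$ by $v$ at no cost: because $\bm{\phi}\cdot\bn$ is constant on each $e_i$ and $Q_b^{(0)}$ reproduces the edge integral of $v$ against edge-constant weights, one has $\langle Q_b^{(0)} v,\bm{\phi}\cdot\bn\rangle_{\pT}=\langle v,\bm{\phi}\cdot\bn\rangle_{\pT}$. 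The third step integrates by parts: since $\bm{\phi}$ is constant, the divergence theorem gives $\langle v,\bm{\phi}\cdot\bn\rangle_{\pT}=\int_T\nabla\cdot(v\bm{\phi})\,dx=(\nabla v,\bm{\phi})_T$, and the definition of the $L^2$ projection then lets me insert $\mbbQ_h^{(0)}$, so that $(\nabla v,\bm{\phi})_T=(\mbbQ_h^{(0)}\nabla v,\bm{\phi})_T$. Collecting this over all constant $\bm{\phi}$ and all rows yields \eqref{projection.2}.

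The identities \eqref{projection.1} and \eqref{projection.4} follow verbatim, with \eqref{DefWGpoly-new} replaced by the weak-divergence relation $(\nabla_w\cdot\bm{v}_b,\phi)_T=\langle\bm{v}_b\cdot\bn,\phi\rangle_{\pT}$ and by the weak-curl relation \eqref{DefWG-curl-new}, respectively. In each case the relevant boundary weight (namely $\phi\bn$ for the divergence, and $\bm{v}\times\bn$ paired with a constant $\phi$ for the curl in 2D) is again edge-wise constant, so $Q_b^{(0)}$ passes through transparently; integration by parts (the divergence theorem for \eqref{projection.1} and the scalar Green's formula $\langle\bm{v}\times\bn,\phi\rangle_{\pT}=-(\nabla\times\bm{v},\phi)_T$ for \eqref{projection.4}) converts the boundary term to an interior one, and the definition of $\mbbQ_h^{(0)}$ finishes each. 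The strain identity \eqref{projection.3} is then purely algebraic: $\varepsilon_w$ and $\varepsilon$ are the symmetric parts of $\nabla_w$ and $\nabla$, and since $\mbbQ_h^{(0)}$ acts componentwise it commutes with transposition and symmetrization, so symmetrizing \eqref{projection.2} gives $\varepsilon_w(Q_b^{(0)}\bm{v})=\tfrac12(\mbbQ_h^{(0)}\nabla\bm{v}+(\mbbQ_h^{(0)}\nabla\bm{v})^T)=\mbbQ_h^{(0)}\varepsilon(\bm{v})$.

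I do not expect a deep obstacle here; the one step that genuinely deserves care, and which I would isolate as a one-line remark, is the commutation in the key second step. It is \emph{exact} precisely because each $e_i$ is a single straight edge on which $\bn_i$ is constant, so that the weight multiplying $v$ in the boundary pairing already lies in the range that $Q_b^{(0)}$ reproduces. This is the only place where the piecewise-constant structure of $W(T)$ enters in an essential way, and it is what makes all three differential projections collapse to the corresponding classical operators followed by $\mbbQ_h^{(0)}$.
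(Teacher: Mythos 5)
Your proof is correct and follows essentially the same route as the paper: test the weak operator against constants, use the defining relation to pass to a boundary pairing, exploit that $\bm{\phi}\cdot\bn$ is edge-wise constant so $Q_b^{(0)}$ can be dropped, integrate by parts, and insert $\mbbQ_h^{(0)}$ by the definition of the $L^2$ projection. The only (welcome) additions are your explicit symmetrization argument for \eqref{projection.3} and the isolated remark on why the commutation with $Q_b^{(0)}$ is exact, both of which the paper leaves implicit.
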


\begin{proof}
For any $\bm{v} \in [H^1(\Omega)]^d$, we have
\begin{eqnarray}
(\nabla_w Q_b^{(0)} \bm{v},q)_T
&=&\langle Q_b^{(0)} \bm{v},q \cdot \bn \rangle_{\partial T} \label{equ.projection.nabla}\\
&=&\langle \bm{v},q \cdot \bn \rangle_{\partial T} \nonumber\\
&=& (\nabla \bm{v},q)_T \nonumber\\
&=& (\mbbQ_h^{(0)} (\nabla\bm{v}),q)_T \nonumber,
\end{eqnarray}
for all $q \in [P_0(\T_h)]^{d\times d}$, which implies $\nabla_w Q_b^{(0)} \bm{v}= \mbbQ_h^{(0)} (\nabla\bm{v})$.
The identities \eqref{projection.2} and \eqref{projection.4} can be verified through a similar approach, and the details are omitted.
\end{proof}

\begin{lemma}\label{lem:error_estimates}
Assume that $(\bm{w};\rho)\in[H^{1+\epsilon}(\Omega)]^d \times H^1(\Omega)$, $\epsilon > \frac{1}{2}$, and $\bmsigma=2\mu \varepsilon (\bm{w})+ \rho I $ satisfies the following equation
\begin{eqnarray}\label{Test_problem}
- \nabla\cdot \bm{\sigma} = \bm{\eta}, \quad \text{in } \Omega.
\end{eqnarray}
Then we have
\begin{equation}\label{Error_Estimates}
\begin{split}
&2(\mu \varepsilon_w(Q_b^{(0)} \bm{w}),\varepsilon_w(\bm{v}_b))_h +
 (\nabla_w\cdot \bm{v}_b, \mbbQ_h^{(0)} \rho)_h\\
=&(\bm{\eta},\S(\bm{v}_b)) + \theta_{\bm{w},\rho}(\bm{v}_b)+ \zeta_{\bm{w},\rho}(\bm{v}_b),
\end{split}
\end{equation}
for all $\bm{v}_b \in  [W^0_h(\T_h)]^d$, where $\theta_{\bm{w},\rho}$ and $\zeta_{\bm{w},\rho}$ are two functionals in the linear space $[W_h(\T_h)]^d$ given by
\begin{eqnarray}
&&\theta_{\bm{w},\rho}(\bm{v}_b)=\sum_{T\in\T_h}\langle \S(\bm{v}_b)-\bm{v_b}, (\bm{\sigma} - \mbbQ_h^{(0)} \bm{\sigma})
\bm{n}\rangle_{\partial T},\label{Error_Estimates.1}\\
&& \zeta_{\bm{w},\rho}(\bm{v}_b)=\langle \bm{v}_b, \bm{\sigma}\bm{n}\rangle_{\Gamma_N}.\label{Error_Estimates.3}
\end{eqnarray}
\end{lemma}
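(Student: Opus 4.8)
The plan is to start from the left-hand side of \eqref{Error_Estimates}, collapse it into a single boundary sum over $\pT$, and then re-expand that sum in a way that lets the strong equation $-\nabla\cdot\bmsigma=\bm{\eta}$ enter through an element-wise integration by parts against the $P_1$ extension $\S(\bm{v}_b)$. First I would invoke the commuting identity \eqref{projection.3}, namely $\varepsilon_w(Q_b^{(0)}\bm{w})=\mbbQ_h^{(0)}\varepsilon(\bm{w})$, and use that $\varepsilon_w(\bm{v}_b)$ and $\nabla_w\cdot\bm{v}_b$ are piecewise constant, so every $\mbbQ_h^{(0)}$ can be dropped against these constant factors. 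Because $\varepsilon(\bm{w})$ is symmetric, $\varepsilon(\bm{w}):\varepsilon_w(\bm{v}_b)=\varepsilon(\bm{w}):\nabla_w\bm{v}_b$, and since $\nabla_w\cdot\bm{v}_b=I:\nabla_w\bm{v}_b$, the two contributions fuse into $(\bmsigma,\nabla_w\bm{v}_b)_h$ with $\bmsigma=2\mu\varepsilon(\bm{w})+\rho I$. Using once more that $\nabla_w\bm{v}_b$ is constant on each $T$, I replace $\bmsigma$ by $\mbbQ_h^{(0)}\bmsigma$ and apply the weak-gradient identity \eqref{DefWGpoly-new} (in its tensor form) to reach the compact expression
\[
\text{LHS}=\sum_{T\in\T_h}\langle\bm{v}_b,(\mbbQ_h^{(0)}\bmsigma)\bm{n}\rangle_{\pT}.
\]

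Next I would reintroduce $\S(\bm{v}_b)$ by splitting $\bm{v}_b=\S(\bm{v}_b)-(\S(\bm{v}_b)-\bm{v}_b)$ inside this boundary sum. The piece carrying $\S(\bm{v}_b)$ is handled by the classical divergence theorem on each element: since $\mbbQ_h^{(0)}\bmsigma$ is a constant (hence divergence-free) tensor and $\nabla\S(\bm{v}_b)$ is constant, one gets $\langle\S(\bm{v}_b),(\mbbQ_h^{(0)}\bmsigma)\bm{n}\rangle_{\pT}=(\mbbQ_h^{(0)}\bmsigma,\nabla\S(\bm{v}_b))_T=(\bmsigma,\nabla\S(\bm{v}_b))_T$. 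Summing over $T$ and integrating by parts against $-\nabla\cdot\bmsigma=\bm{\eta}$ on each element yields $(\bm{\eta},\S(\bm{v}_b))+\sum_T\langle\bmsigma\bm{n},\S(\bm{v}_b)\rangle_{\pT}$. After assembling all pieces and recalling the definition \eqref{Error_Estimates.1} of $\theta_{\bm{w},\rho}$, the copies of $\langle\S(\bm{v}_b),(\mbbQ_h^{(0)}\bmsigma)\bm{n}\rangle$ and of $\langle\S(\bm{v}_b),\bmsigma\bm{n}\rangle$ cancel in pairs, so the whole claim reduces to the single edge-bookkeeping identity
\[
\sum_{T\in\T_h}\langle\bm{v}_b,\bmsigma\bm{n}\rangle_{\pT}=\langle\bm{v}_b,\bmsigma\bm{n}\rangle_{\Gamma_N}.
\]

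The main obstacle is precisely this last step, and it is where the regularity hypothesis $\bm{w}\in[H^{1+\epsilon}(\Omega)]^d$, $\rho\in H^1(\Omega)$ with $\epsilon>\tfrac12$ is essential. That regularity places $\bmsigma$ in $H^{\epsilon}(\Omega)$ with $\epsilon>\tfrac12$, which by the trace theorem gives an honest $L^2(\pT)$ normal trace $\bmsigma\bm{n}$ whose value is single-valued across each interior edge. Combined with $\bm{v}_b$ being single-valued on interior edges and vanishing on $\Gamma_D$ (as $\bm{v}_b\in[W_h^0(\T_h)]^d$), the interior contributions cancel edge-by-edge because the two outward normals are opposite, leaving only the $\Gamma_N$ term, which is exactly $\zeta_{\bm{w},\rho}(\bm{v}_b)$. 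I expect the remaining effort to be purely organizational: keeping the tensor inner products and the add--subtract splittings tidy so that $\theta_{\bm{w},\rho}$ and $\zeta_{\bm{w},\rho}$ emerge in the stated form, with no use of the defining property of $\S$ beyond its being a $P_1$ function on each element.
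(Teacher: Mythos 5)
Your proposal is correct, and every step is justified by tools the paper itself provides: the reduction of the left-hand side to $\sum_{T\in\T_h}\langle\bm{v}_b,(\mbbQ_h^{(0)}\bmsigma)\bm{n}\rangle_{\pT}$ via the commuting identity \eqref{projection.3} and the constant-test-tensor identity \eqref{DefWGpoly-new}; the element-wise integration by parts against $-\nabla\cdot\bmsigma=\bm{\eta}$ (legitimate because $\epsilon>\frac12$ gives $\bmsigma$ an $L^2(\pT)$ trace); and the closing identity $\sum_{T\in\T_h}\langle\bm{v}_b,\bmsigma\bm{n}\rangle_{\pT}=\langle\bm{v}_b,\bmsigma\bm{n}\rangle_{\Gamma_N}$, which holds exactly as you argue, by single-valuedness of $\bm{v}_b$ and of the trace of $\bmsigma\bm{n}$ across interior edges, opposite normals, and $\bm{v}_b=0$ on $\Gamma_D$.

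Your organization is, however, genuinely different from the paper's and worth contrasting. The paper never fuses the two terms into the stress tensor: it handles them separately, converting the strain term (in \eqref{pre.eq1}) into the volume term $2(\nabla\S(\bm{v}_b),\mu\varepsilon(\bm{w}))_T$ plus a boundary correction, and integrating the pressure term by parts so that the derivative lands on $\rho$ (producing $-(\S(\bm{v}_b),\nabla\rho)$ in \eqref{pre.eq2}, which is where $\rho\in H^1$ enters); it then tests the strong equation with $\S(\bm{v}_b)$ in the split form $-2(\nabla\cdot(\mu\varepsilon(\bm{w})),\S(\bm{v}_b))-(\nabla\rho,\S(\bm{v}_b))=(\bm{\eta},\S(\bm{v}_b))$ so the volume terms cancel. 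You integrate by parts exactly once, against $\nabla\cdot\bmsigma$ as a whole. This shortens the computation, avoids the intermediate object $(\S(\bm{v}_b),\nabla\rho)$ entirely, and is slightly more economical on regularity: it needs only $\nabla\cdot\bmsigma\in L^2$ together with an $L^2$ edge trace for $\bmsigma$, never the separate differentiability of $\rho$ and $\varepsilon(\bm{w})$ that the paper's split manipulation formally invokes (though the lemma's hypotheses supply it). The paper's split form is not wasted, since identities of the shape \eqref{pre.eq2} reappear in the later $H^1$ and duality arguments; but as a proof of this lemma, your fused-stress route is cleaner. Both arguments rest on the same three pillars: the commuting projection identities, the extension $\S(\bm{v}_b)$ as the bridge to the continuous equation, and the interior-edge cancellation that leaves only the $\Gamma_N$ contribution.
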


\begin{proof} For any $T\in\T_h$, from \eqref{projection.3} and the integration by parts, we have
\begin{eqnarray}
& &2(\mu\varepsilon_w(Q_b^{(0)} \bm{w}),\varepsilon_w(\bm{v}_b))_T\nonumber\\
&=&2(\mu\mbbQ_h^{(0)}\varepsilon(\bm{w}),\varepsilon_w(\bm{v}_b))_T \nonumber\\
&=&2\langle \bm{v}_b, \mu\mbbQ_h^{(0)}\varepsilon(\bm{w})\bm{n}\rangle_{\partial T}
  \nonumber\\
&=&2 \langle \bm{v}_b, \mu\mbbQ_h^{(0)}\varepsilon(\bm{w})\bm{n}\rangle_{\partial T}
  +2(\mu\nabla \S(\bm{v}_b),\varepsilon(\bm{w}))_T
  -2(\mu\nabla \S(\bm{v}_b),\varepsilon(\bm{w}))_T
 \label{pre.eq1} \\
&=&2\langle \bm{v}_b, \mu\mbbQ_h^{(0)}\varepsilon(\bm{w})\bm{n}\rangle_{\partial T}
   +2(\nabla \S(\bm{v}_b),\mu\varepsilon(\bm{w}))_T
   -2(\nabla \S(\bm{v}_b),\mu\mbbQ_h^{(0)}\varepsilon(\bm{w}))_T
    \nonumber\\
&=&2\langle \bm{v}_b, \mu\mbbQ_h^{(0)}\varepsilon(\bm{w})\bm{n}\rangle_{\partial T}
   +2(\nabla \S(\bm{v}_b),\mu\varepsilon(\bm{w}))_T
   -2\langle \S(\bm{v}_b), \mu\mbbQ_h^{(0)}\varepsilon(\bm{w})\bm{n}\rangle_{\partial T}
    \nonumber\\
&=& 2(\nabla \S(\bm{v}_b),\mu\varepsilon(\bm{w}))_T
    - 2\langle \S(\bm{v}_b)- \bm{v}_b, \mu\mbbQ_h^{(0)}\varepsilon(\bm{w})\bm{n}\rangle_{\partial T} \nonumber .
\end{eqnarray}
Next, by the definition of the weak divergence, the integration by parts, and the fact that $\displaystyle\sum_{T\in\T_h}\langle \bm{v}_b, \rho\bm{n}\rangle_{\partial T}=\langle \bm{v}_b, \rho\bm{n}\rangle_{\Gamma_N}$, we have
\begin{eqnarray*}
(\nabla_w \cdot \bm{v}_b, \mbbQ_h^{(0)} \rho)_h&=&\sum_{T\in \T_h}(\nabla_w \cdot \bm{v}_b, \mbbQ_h^{(0)} \rho)_T\\
&=& \sum_{T\in\T_h} \langle \bm{v}_b, (\mbbQ_h^{(0)} \rho)\bm{n}\rangle_{\partial T}\\
&=& \sum_{T\in\T_h}\left[ (\nabla \cdot \S(\bm{v}_b), \mbbQ_h^{(0)} \rho)_T - (\nabla \cdot \S(\bm{v}_b), \mbbQ_h^{(0)} \rho)_T + \langle \bm{v}_b, (\mbbQ_h^{(0)} \rho)\bm{n}\rangle_{\partial T}\right]\\
&=& \sum_{T\in\T_h}\left[(\nabla \cdot \S(\bm{v}_b), \rho)_T - \langle  \S(\bm{v}_b), (\mbbQ_h^{(0)} \rho) \bm{n})_{\partial T} + \langle \bm{v}_b, (\mbbQ_h^{(0)} \rho)\bm{n}\rangle_{\partial T}\right]\\
&=& \sum_{T\in\T_h}
\left[-(\S(\bm{v}_b), \nabla \rho)_T
      + \langle    \S(\bm{v}_b),\rho\bm{n} \rangle_{\partial T}
      - \langle    \S(\bm{v}_b)-\bm{v}_b,(\mbbQ_h^{(0)} \rho)\bm{n}\rangle_{\partial T}\right]\\
&=& \sum_{T\in\T_h}
\left[-(\S(\bm{v}_b), \nabla \rho)_T
      + \langle \S(\bm{v}_b)-\bm{v_b}, \rho\bm{n} \rangle_{\partial T}
      - \langle \S(\bm{v}_b)-\bm{v}_b,(\mbbQ_h^{(0)} \rho)\bm{n}\rangle_{\partial T}\right]\\
& & + \langle \bm{v}_b, \rho\bm{n}\rangle_{\Gamma_N}\\
&=& - (\S(\bm{v}_b), \nabla \rho)
    + \sum_{T\in\T_h} \langle \S(\bm{v}_b)-\bm{v_b}, (\rho - \mbbQ_h^{(0)} \rho)\bm{n}\rangle_{\partial T}+ \langle \bm{v}_b, \rho\bm{n}\rangle_{\Gamma_N},
\end{eqnarray*}
which leads to
\begin{eqnarray}\label{pre.eq2}
(\S(\bm{v}_b), \nabla \rho)
&=& -(\nabla_w \cdot \bm{v}_b, \mbbQ_h^{(0)} \rho)_h + \langle \bm{v}_b, \rho\bm{n}\rangle_{\Gamma_N}\\
&&+ \sum_{T\in\T_h} \langle \S(\bm{v}_b)-\bm{v_b}, (\rho - \mbbQ_h^{(0)} \rho)\bm{n}\rangle_{\partial T}.\nonumber
\end{eqnarray}
Now by testing \eqref{Test_problem} with $\S(\bm{v}_b)$ we arrive at
\begin{eqnarray*}
-2(\nabla \cdot (\mu \varepsilon(\bm{w})),\S(\bm{v}_b)) -(\nabla \rho, \S(\bm{v}_b))
=(\bm{\eta}, \S(\bm{v}_b)).
\end{eqnarray*}
From the integration by parts, the above equation can be rewritten as
\begin{eqnarray}\label{pre.eq3}
\sum_{T\in\T_h} \left\{(2\mu \varepsilon(\bm{w}),\nabla \S(\bm{v}_b))_T
-\langle\mu \varepsilon(\bm{w})\bm{n}, \S(\bm{v}_b)\rangle_{\partial T} \right\}
-(\nabla \rho, \S(\bm{v}_b))=(\bm{\eta}, \S(\bm{v}_b)).
\end{eqnarray}
Substituting equation \eqref{pre.eq1} and \eqref{pre.eq2} into \eqref{pre.eq3} yields
\begin{eqnarray*}
&& 2\sum_{T\in\T_h} \left\{(\mu\varepsilon_w(Q_b^{(0)} \bm{w}),\varepsilon_w(\bm{v}_b))_T
+\langle \S(\bm{v}_b)- \bm{v}_b, \mu\mbbQ_h^{(0)}\varepsilon(\bm{w})\bm{n}\rangle_{\partial T}
-\langle \mu\varepsilon(\bm{w})\bm{n}, \S(\bm{v}_b)\rangle_{\partial T}\right\}\\
&& +(\nabla_w \cdot \bm{v}_b, \mbbQ_h^{(0)} \rho)_h
-\sum_{T\in\T_h} \langle \S(\bm{v}_b)-\bm{v_b}, (\rho - \mbbQ_h^{(0)} \rho)\bm{n}\rangle_{\partial T} - \langle \bm{v}_b, \rho\bm{n}\rangle_{\Gamma_N}
=(\bm{\eta}, \S(\bm{v}_b)),
\end{eqnarray*}
which implies
\begin{eqnarray*}
&& 2 \sum_{T\in\T_h} \left\{(\mu\varepsilon_w(Q_b^{(0)} \bm{w}),\varepsilon_w(\bm{v}_b))_T
+\langle \S(\bm{v}_b)- \bm{v}_b, \mu(\mbbQ_h^{(0)}\varepsilon(\bm{w})-\varepsilon(\bm{w}))\bm{n}\rangle_{\partial T}
-\langle \mu\varepsilon(\bm{w})\bm{n}, \bm{v}_b\rangle_{\partial T}\right\}\\
&&+(\nabla_w \cdot \bm{v}_b, \mbbQ_h^{(0)} \rho)_h
-\sum_{T\in\T_h} \langle \S(\bm{v}_b)-\bm{v_b}, (\rho - \mbbQ_h^{(0)} \rho)\bm{n}\rangle_{\partial T}- \langle \bm{v}_b, \rho\bm{n}\rangle_{\Gamma_N}=(\bm{\eta}, \S(\bm{v}_b)).
\end{eqnarray*}
Using the boundary condition $\bm{v}_b =0$ on $\Gamma_D$, we obtain
\begin{eqnarray*}
&&2(\mu\varepsilon_w(Q_b^{(0)} \bm{w}),\varepsilon_w(\bm{v}_b))_h + (\nabla_w \cdot \bm{v}_b, \mbbQ_h^{(0)} \rho)_h\\
&=& \sum_{T\in\T_h} \left\{2\langle \bm{v}_b-\S(\bm{v}_b), \mu(\mbbQ_h^{(0)}\varepsilon(\bm{w})-\varepsilon(\bm{w}))\bm{n}\rangle_{\partial T}
+\langle \S(\bm{v}_b)-\bm{v_b}, (\rho - \mbbQ_h^{(0)} \rho)\bm{n}\rangle_{\partial T}\right\}\\
&&+\langle \bm{v}_b, (2\mu\varepsilon(\bm{w})+\rho I)\bm{n}\rangle_{\Gamma_N}+(\bm{\eta}, \S(\bm{v}_b)),
\end{eqnarray*}
which is precisely the equation \eqref{Error_Estimates}.
This completes the proof.
\end{proof}

\section{An error estimate in $H^1$}\label{Section:H1}
Let $(\bm{u}_b;p_h)\in [W_h(\T_h)]^d \times P_0(\T_h)$ be the numerical approximation of the linear elasticity problem \eqref{elasbdy}-\eqref{elasbcn} arising from \eqref{equ.elasticity-Mix-SWG}. The corresponding error functions $\bm{e}_h$ and $\xi_h$ are given by
\begin{eqnarray}\label{error_function}
\bm{e}_h=Q_b^{(0)}\bm{u} -\bm{u}_b,\quad \xi_h=\mbbQ_h^{(0)} p -p_h,
\end{eqnarray}
where $(\bm{u};p)$ is the exact solution of the variational problem \eqref{elasticity_mixed1}-\eqref{elasticity_mixed2}.
It is clear that $\bm{e}_h\in [W^0_h(\T_h)]^d$ and $\xi_h \in P_0(\T_h)$.

\begin{lemma}
The error functions $\bm{e}_h$ and $\xi_h$ defined in \eqref{error_function} satisfy the following error equations
\begin{eqnarray}
a(\bm{e}_h,\bm{v}_b) + b(\bm{v}_b,\xi_h)&=&\varphi_{\bm{u},p}(\bm{v}_b),\quad \forall \bm{v}_b \in W^0_h(\T_h), \label{error_equation.1}\\
b(\bm{e}_h,q)-d(\xi_h,q)&=&0,\quad \forall q\in P_0(\T_h), \label{error_equation.2}
\end{eqnarray}
where
\begin{eqnarray}
\varphi_{\bm{u},p}(\bm{v}_b) = S(Q_b^{(0)}\bm{u},\bm{v}_b)+
\theta_{\bm{u},p}(\bm{v}_b).
\end{eqnarray}
\end{lemma}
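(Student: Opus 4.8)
The plan is to derive the two error equations directly from the definitions of the errors in \eqref{error_function} and the scheme \eqref{equ.elasticity-Mix-SWG}, using the key identity \eqref{Error_Estimates} of Lemma \ref{lem:error_estimates} applied to the exact solution $(\bm{u};p)$. The second equation \eqref{error_equation.2} should be the easy one, so I would dispose of it first; the first equation \eqref{error_equation.1} is where the real work lies, and the main obstacle will be matching the boundary-integral term $\theta_{\bm{u},p}(\bm{v}_b)$ produced by the Lemma against the source term $(\bm{f},\S(\bm{v}_b))+\langle\bm{\varrho},\bm{v}_b\rangle_{\Gamma_N}$ in the scheme, while absorbing the stabilizer into the $S(Q_b^{(0)}\bm{u},\bm{v}_b)$ term.

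\medskip
First I would establish \eqref{error_equation.2}. By definition $b(\bm{e}_h,q)-d(\xi_h,q) = b(Q_b^{(0)}\bm{u},q) - b(\bm{u}_b,q) - \lambda^{-1}(\mbbQ_h^{(0)}p - p_h, q)$. The scheme's second equation gives $b(\bm{u}_b,q)=d(p_h,q)=\lambda^{-1}(p_h,q)$, so it remains to show $b(Q_b^{(0)}\bm{u},q)=\lambda^{-1}(\mbbQ_h^{(0)}p,q)$ for all $q\in P_0(\T_h)$. Using identity \eqref{projection.1}, $b(Q_b^{(0)}\bm{u},q)=(\nabla_w\cdot Q_b^{(0)}\bm{u},q)=(\mbbQ_h^{(0)}(\nabla\cdot\bm{u}),q)=(\nabla\cdot\bm{u},q)$, and the continuous constraint \eqref{elasticity_mixed2} says $(\nabla\cdot\bm{u},q)=\lambda^{-1}(p,q)=\lambda^{-1}(\mbbQ_h^{(0)}p,q)$ since $q$ is piecewise constant. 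This closes \eqref{error_equation.2}.

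\medskip
For \eqref{error_equation.1} I would apply Lemma \ref{lem:error_estimates} with the choice $(\bm{w};\rho)=(\bm{u};p)$, $\bmsigma=2\mu\varepsilon(\bm{u})+pI=\bm{\sigma}(\bm{u})$, and $\bm{\eta}=\bm{f}$, noting that $-\nabla\cdot\bmsigma=\bm{f}$ holds for the exact solution and that $\bm{\varrho}=\bmsigma\bm{n}$ on $\Gamma_N$ so that $\zeta_{\bm{u},p}(\bm{v}_b)=\langle\bm{v}_b,\bmsigma\bm{n}\rangle_{\Gamma_N}=\langle\bm{\varrho},\bm{v}_b\rangle_{\Gamma_N}$. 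Identity \eqref{Error_Estimates} then reads
\begin{equation*}
2(\mu\varepsilon_w(Q_b^{(0)}\bm{u}),\varepsilon_w(\bm{v}_b))_h + (\nabla_w\cdot\bm{v}_b,\mbbQ_h^{(0)}p)_h = (\bm{f},\S(\bm{v}_b)) + \theta_{\bm{u},p}(\bm{v}_b) + \langle\bm{\varrho},\bm{v}_b\rangle_{\Gamma_N}.
\end{equation*}
I would then subtract the scheme's first equation $2(\mu\varepsilon_w(\bm{u}_b),\varepsilon_w(\bm{v}_b))+S(\bm{u}_b,\bm{v}_b)+b(\bm{v}_b,p_h)=(\bm{f},\S(\bm{v}_b))+\langle\bm{\varrho},\bm{v}_b\rangle_{\Gamma_N}$ from this identity. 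The source terms cancel exactly, leaving $2(\mu\varepsilon_w(\bm{e}_h),\varepsilon_w(\bm{v}_b))+b(\bm{v}_b,\xi_h) = \theta_{\bm{u},p}(\bm{v}_b) + S(\bm{u}_b,\bm{v}_b)$, where I have recognized $(\nabla_w\cdot\bm{v}_b,\mbbQ_h^{(0)}p-p_h)_h=b(\bm{v}_b,\xi_h)$.

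\medskip
The final bookkeeping step, and the one to watch, is converting this into the stated form $a(\bm{e}_h,\bm{v}_b)+b(\bm{v}_b,\xi_h)=S(Q_b^{(0)}\bm{u},\bm{v}_b)+\theta_{\bm{u},p}(\bm{v}_b)$. Since $a(\bm{e}_h,\bm{v}_b)=2(\mu\varepsilon_w(\bm{e}_h),\varepsilon_w(\bm{v}_b))+S(\bm{e}_h,\bm{v}_b)$ and $\bm{e}_h=Q_b^{(0)}\bm{u}-\bm{u}_b$, the bilinearity of $S$ gives $S(\bm{e}_h,\bm{v}_b)=S(Q_b^{(0)}\bm{u},\bm{v}_b)-S(\bm{u}_b,\bm{v}_b)$; adding $S(\bm{e}_h,\bm{v}_b)$ to both sides of the identity from the previous paragraph precisely produces $a(\bm{e}_h,\bm{v}_b)$ on the left and replaces $S(\bm{u}_b,\bm{v}_b)$ on the right with $S(Q_b^{(0)}\bm{u},\bm{v}_b)$, yielding \eqref{error_equation.1} with $\varphi_{\bm{u},p}(\bm{v}_b)=S(Q_b^{(0)}\bm{u},\bm{v}_b)+\theta_{\bm{u},p}(\bm{v}_b)$. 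The only subtlety here is that Lemma \ref{lem:error_estimates} treats only the symmetric-gradient and divergence terms and carries \emph{no} stabilizer, so the stabilizer must be introduced by hand through this cancellation; I would double-check that the tangential-jump part of $S$ is handled consistently (it enters both $S(\bm{u}_b,\bm{v}_b)$ and $S(Q_b^{(0)}\bm{u},\bm{v}_b)$ identically and so poses no difficulty under bilinearity).
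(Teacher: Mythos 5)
Your proposal is correct and follows essentially the same route as the paper: both derive \eqref{error_equation.2} from the projection identity \eqref{projection.1} together with the continuous constraint, and both obtain \eqref{error_equation.1} by applying Lemma \ref{lem:error_estimates} to the exact solution with $\bm{\eta}=\bm{f}$, identifying $\zeta_{\bm{u},p}(\bm{v}_b)$ with the Neumann data term via $\bm{\sigma}\bm{n}=\bm{\varrho}$, and subtracting the scheme. The only difference is cosmetic ordering — the paper adds $S(Q_b^{(0)}\bm{u},\bm{v}_b)$ to both sides of the identity \emph{before} subtracting the scheme, whereas you subtract first and then add $S(\bm{e}_h,\bm{v}_b)$ — which is the same algebra.
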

\begin{proof}
Observe that the exact solution $(\bm{u};p)$ satisfies the equation \eqref{Test_problem}
with $\eta=f$. Thus from Lemma \ref{lem:error_estimates}, we have
\begin{eqnarray}\label{eq:error_estimates_H1.1}
2(\mu\varepsilon(Q_b^{(0)} \bm{u}),\varepsilon_w(\bm{v}_b))_h + (\nabla_w \cdot \bm{v}_b, \mbbQ_h^{(0)} p)_h = (f,\S(\bm{v}_b)) + \theta_{\bm{u},p}(\bm{v}_b)+\zeta_{\bm{u},p}(\bm{v}_b)
\end{eqnarray}
for any $ \bm{v}_b\in W^0_h(\T_h)$, which leads to
\begin{eqnarray}\label{eq:error_estimates_H1.2}
&&a(Q_b^{(0)}\bm{u},\bm{v}_b) + b(\bm{v}_b,\mbbQ_h^{(0)} p) \\
&=& (f,\S(\bm{v}_b)) + \theta_{\bm{u},p}(\bm{v}_b)+\zeta_{\bm{u},p}(\bm{v}_b) + S(Q_b^{(0)}\bm{u},\bm{v}_b).\nonumber
\end{eqnarray}
Note that the boundary condition \eqref{elasbcn} implies $\bm{\sigma}\bm{n}=\bm{\varrho}$ on $\Gamma_N$. Thus,
by subtracting the first equation of \eqref{equ.elasticity-Mix-SWG} from \eqref{eq:error_estimates_H1.2} we arrive at the equation \eqref{error_equation.1}.

To derive \eqref{error_equation.2}, from \eqref{projection.1} we have for any $q \in P_0(\T_h)$
\begin{equation}\label{eq.error_estimates.3}
\begin{split}
& \ (\nabla_w\cdot(Q_b^{(0)} \bm{u}),q) -\lambda^{-1}(\mbbQ_h^{(0)}p,q)\\
=&\ (\mbbQ_h^{(0)}(\nabla \cdot \bm{u}),q) -\lambda^{-1}(\mbbQ_h^{(0)}p,q)\\
=& \ (\nabla \cdot \bm{u},q) -\lambda^{-1}(p,q)\\
=& \ 0.
\end{split}
\end{equation}
The difference of \eqref{eq.error_estimates.3} and the second equation of
\eqref{equ.elasticity-Mix-SWG} yields \eqref{error_equation.2}.
\end{proof}

The following theorem is concerned with an error estimate for the weak Galerkin finite element approximation $(\bm{u}_b;p_h)$.

\begin{theorem}\label{thm.error.1}
Let $(\bm{u};p) \in [H^{2}(\Omega)]^d\times H^{1}(\Omega)$ be the solution of \eqref{elasbdy}-\eqref{elasbcn}, and $(\bm{u}_b;p_h) \in [W_h(\T_h)]^d \times P_0(\T_h)$ be the solution of the SWG scheme \eqref{equ.elasticity-Mix-SWG}, respectively.
Then, the following error estimate holds true
\begin{eqnarray}\label{eq:error_estimates.h1.3}
\3bar Q_b^{(0)} \bm{u} - \bm{u}_b\3bar +(1+\lambda^{-\frac{1}{2}})\|\mbbQ_h^{(0)} p -p_h\|_0
\leq Ch(\|\bm{u}\|_2 + \|\bm{\sigma}\|_1 + h^{s-1}\|\bm{\sigma}\|_{s,\partial\Omega}),
\end{eqnarray}
where $C$ is a generic constant independent of $(\bm{u};p)$, $s\in [0,1]$ is arbitrary as long as the solution has the corresponding regularity, and $\bm{\sigma} = 2\mu\varepsilon(\bm{u})+ p I$.
Consequently, the following error estimates holds true
\begin{eqnarray}\label{eq:error_estimates.h1.4}
\3bar \bm{u} - \bm{u}_b\3bar +(1+\lambda^{-\frac{1}{2}})\| p -p_h\|_0
\leq Ch(\|\bm{u}\|_2 + \|\bm{\sigma}\|_1 + h^{s-1}\|\bm{\sigma}\|_{s,\partial\Omega}).
\end{eqnarray}
\end{theorem}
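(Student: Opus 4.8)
The plan is to treat \eqref{eq:error_estimates.h1.3} as a standard a priori estimate for the saddle-point error system \eqref{error_equation.1}--\eqref{error_equation.2}, reducing the whole problem to a single consistency bound on the functional $\varphi_{\bm{u},p}$. The three structural ingredients are already in hand: the coercivity and boundedness of $a$ on $[W_h^0(\T_h)]^2$ from Lemma \ref{lem:ellip}, the discrete \emph{inf-sup} stability of $b$ from Lemma \ref{lem:inf_sup}, and the positive semidefiniteness of $d(\cdot,\cdot)=\lambda^{-1}(\cdot,\cdot)$. First I would carry out the energy step: testing \eqref{error_equation.1} with $\bm{v}_b=\bm{e}_h$, testing \eqref{error_equation.2} with $q=\xi_h$, and subtracting the resulting identities yields
\begin{equation*}
a(\bm{e}_h,\bm{e}_h)+\lambda^{-1}\|\xi_h\|_0^2=\varphi_{\bm{u},p}(\bm{e}_h).
\end{equation*}
By the lower bound in Lemma \ref{lem:ellip} this gives $\alpha_1\3bar\bm{e}_h\3bar^2+\lambda^{-1}\|\xi_h\|_0^2\le|\varphi_{\bm{u},p}(\bm{e}_h)|$, so once $\varphi_{\bm{u},p}$ is bounded by $Ch(\cdots)\3bar\bm{e}_h\3bar$ I simultaneously control $\3bar\bm{e}_h\3bar$ and the weighted pressure norm $\lambda^{-\frac12}\|\xi_h\|_0$.

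To upgrade the weighted pressure bound to the full $\|\xi_h\|_0$ (the coefficient $1$ in $(1+\lambda^{-\frac12})\|\xi_h\|_0$), I would invoke the \emph{inf-sup} condition. Rewriting \eqref{error_equation.1} as $b(\bm{v}_b,\xi_h)=\varphi_{\bm{u},p}(\bm{v}_b)-a(\bm{e}_h,\bm{v}_b)$ and using Lemma \ref{lem:inf_sup} together with the Cauchy--Schwarz bound $|a(\bm{e}_h,\bm{v}_b)|\le\alpha_2\3bar\bm{e}_h\3bar\,\3bar\bm{v}_b\3bar$ gives
\begin{equation*}
\beta\|\xi_h\|_0\le\sup_{\bm{v}_b\neq0}\frac{|\varphi_{\bm{u},p}(\bm{v}_b)|}{\3bar\bm{v}_b\3bar}+\alpha_2\3bar\bm{e}_h\3bar.
\end{equation*}
Thus all three contributions are governed by the single quantity $\sup_{\bm{v}_b}|\varphi_{\bm{u},p}(\bm{v}_b)|/\3bar\bm{v}_b\3bar$, and the proof collapses to showing $|\varphi_{\bm{u},p}(\bm{v}_b)|\le Ch(\|\bm{u}\|_2+\|\bm{\sigma}\|_1+h^{s-1}\|\bm{\sigma}\|_{s,\partial\Omega})\3bar\bm{v}_b\3bar$.

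For this consistency estimate I would use the splitting $\varphi_{\bm{u},p}(\bm{v}_b)=S(Q_b^{(0)}\bm{u},\bm{v}_b)+\theta_{\bm{u},p}(\bm{v}_b)$. The stabilizer term is handled by Cauchy--Schwarz for the symmetric positive form $S$, namely $|S(Q_b^{(0)}\bm{u},\bm{v}_b)|\le S(Q_b^{(0)}\bm{u},Q_b^{(0)}\bm{u})^{\frac12}\3bar\bm{v}_b\3bar$, after which an approximation argument for the least-squares extension $\S$ and the projection identity \eqref{projection.2} for the tangential jump part should yield the super-closeness bound $S(Q_b^{(0)}\bm{u},Q_b^{(0)}\bm{u})^{\frac12}\le Ch\|\bm{u}\|_2$. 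For $\theta_{\bm{u},p}$, applying Cauchy--Schwarz over $\partial T$ in \eqref{Error_Estimates.1} gives
\begin{equation*}
|\theta_{\bm{u},p}(\bm{v}_b)|\le\Big(\sum_{T\in\T_h}\|\S(\bm{v}_b)-\bm{v}_b\|_{\partial T}^2\Big)^{\frac12}\Big(\sum_{T\in\T_h}\|(\bm{\sigma}-\mbbQ_h^{(0)}\bm{\sigma})\bm{n}\|_{\partial T}^2\Big)^{\frac12},
\end{equation*}
where the second factor is estimated by trace and approximation inequalities as $Ch^{\frac12}\|\bm{\sigma}\|_1$ (or $Ch^{s-\frac12}\|\bm{\sigma}\|_{s,\partial\Omega}$ under reduced regularity), and the first factor by $Ch^{\frac12}\3bar\bm{v}_b\3bar$.

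The hard part will be precisely the trace-type bound $\|\S(\bm{v}_b)-\bm{v}_b\|_{\partial T}\le Ch^{\frac12}\3bar\bm{v}_b\3bar$ for the first factor of $\theta$. Writing $\S(\bm{v}_b)-\bm{v}_b$ on each edge as the midpoint discrepancy (which is exactly the WG stabilizer) plus a tangential-derivative remainder, the midpoint part is immediately of order $h^{\frac12}$ times the second term in \eqref{EQ:Semi-norm}, but the remainder is controlled by the full weak gradient $\nabla_w\bm{v}_b$ rather than by the symmetric part $\varepsilon_w(\bm{v}_b)$ appearing in the norm. Bounding $\|\nabla_w\bm{v}_b\|$ by $\3bar\bm{v}_b\3bar$ therefore requires a \emph{discrete Korn inequality}, and it is exactly here that the tangential stabilization term in \eqref{eq:stabilizer} is indispensable: it suppresses the rigid-motion (curl) component in the same way it was used in the positivity argument of Lemma \ref{LEMMA:October:20:01}. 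I would establish this inequality, together with the low-regularity fractional trace estimate producing the $h^{s-1}\|\bm{\sigma}\|_{s,\partial\Omega}$ contribution, in the auxiliary Section \ref{Section:TI}, and then assemble the three displayed bounds. Finally, \eqref{eq:error_estimates.h1.4} follows from \eqref{eq:error_estimates.h1.3} by the triangle inequality and the standard projection approximation estimates for $\bm{u}-Q_b^{(0)}\bm{u}$ and $p-\mbbQ_h^{(0)}p$.
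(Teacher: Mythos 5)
Your overall skeleton is exactly the paper's: the energy identity $a(\bm{e}_h,\bm{e}_h)+\lambda^{-1}\|\xi_h\|_0^2=\varphi_{\bm{u},p}(\bm{e}_h)$, coercivity from Lemma \ref{lem:ellip}, the \emph{inf-sup} upgrade of the pressure bound via $b(\bm{v}_b,\xi_h)=\varphi_{\bm{u},p}(\bm{v}_b)-a(\bm{e}_h,\bm{v}_b)$, the reduction to a consistency bound on $\varphi_{\bm{u},p}$, and the final triangle inequality; your treatment of the stabilizer term $S(Q_b^{(0)}\bm{u},\bm{v}_b)$ also matches the paper's Lemma \ref{lem:H1}, estimate \eqref{H1.1}. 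The gap is in your treatment of $\theta_{\bm{u},p}$. By applying Cauchy--Schwarz elementwise with $\|\S(\bm{v}_b)-\bm{v}_b\|_{\partial T}$ as one factor, you throw away the inter-element cancellation that the paper's proof of \eqref{H1.3} is built on. The paper splits $\S(\bm{v}_b)-\bm{v}_b$ into the stabilizer part $Q_b^{(0)}\S(\bm{v}_b)-\bm{v}_b$ plus the linear oscillation $(x-x_m)\partial_{\bm{\tau}}\S(\bm{v}_b)$ on each edge, and then \emph{keeps the pairing}: since $(Q_b^{(0)}\bm{\sigma}-\bm{\sigma})\bm{n}$ is single-valued up to the sign of $\bm{n}$, summing over elements turns the oscillation term into a sum over edges involving only the \emph{jumps} $[\partial_{\bm{\tau}}\S(\bm{v}_b)]_e$ (and, on boundary edges, the one-sided trace). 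These are controlled by the tangential-jump term of \eqref{EQ:Semi-norm} with precisely the right weights --- this is in fact why $\gamma=1$ on interior edges and $\gamma=2$ on boundary edges --- so no control of the full weak gradient is ever needed.

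Your route, by contrast, genuinely requires the $h$-uniform discrete Korn inequality $\|\nabla_w\bm{v}_b\|\leq C\3bar\bm{v}_b\3bar$, and this is not a deferrable technicality: it is the central difficulty of the whole paper. The positivity argument of Lemma \ref{LEMMA:October:20:01} that you invoke only shows that $\3bar\cdot\3bar$ is a norm on a finite-dimensional space for each fixed mesh; it gives no constant independent of $h$. Moreover, the $P_1/P_0$ element is exactly the case excluded from the theory of \cite{WangWang_2016} because the rigid-motion assumption fails (witness the non-convergence on triangular meshes reported in Section \ref{Section:NE}), so element-wise rotations are not controlled by $\varepsilon_w$ and the midpoint stabilizer alone. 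What remains is the tangential-jump term, but its weighting is quantitatively too weak for a standard argument: for piecewise-constant quantities, $h_e^\gamma\|[\cdot]\|_e^2\sim h_e^{\gamma+1}|[\cdot]|^2$, whereas Poincar\'e/Korn-type estimates for piecewise constants require jump terms of strength $\sum_e h_e^{-1}\|[\cdot]\|_e^2\sim\sum_e|[\cdot]|^2$, i.e.\ your available control is short by a factor of order $h^2$. So the inequality you would need is not merely unproven here --- there is real reason to doubt it holds with the norm \eqref{EQ:Semi-norm} as given. To repair the proof you should abandon the premature Cauchy--Schwarz step and reproduce the edge-jump cancellation argument of \eqref{eq:Lw1}--\eqref{eq:Lw2}, which is also where the boundary term $h^s\|\bm{\sigma}\|_{s,\partial\Omega}$ naturally enters (from the boundary edges, where no jump cancellation is available).
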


\begin{proof}
By choosing $\bm{v}_b=\bm{e}_h$  and $q=\xi_h$
in \eqref{error_equation.1}, we have
\begin{eqnarray}
a(\bm{e}_h, \bm{e}_h) + \lambda^{-1}\|\xi_h\|^2=\varphi_{\bm{u},p}(\bm{e}_h).
\end{eqnarray}
Using Lemma \ref{lem:H1} for the term $\varphi_{\bm{u},p}(\bm{e}_h)$ we arrive at
\begin{eqnarray}
a(\bm{e}_h, \bm{e}_h) + \lambda^{-1}\|\xi_h\|^2\leq C(h\|\bm{\sigma}\|_1 +h^s\|\bm{\sigma}\|_{s,\partial\Omega})\3bar \bm{e}_h\3bar,
\end{eqnarray}
where $\bm{\sigma} = 2\mu\varepsilon(\bm{u})+p I$.
Next, from Lemma \ref{lem:ellip} and the above estimate we obtain
\begin{eqnarray}\label{eq:error_estimates.h1.1}
\alpha_1 \3bar \bm{e}_h\3bar + \lambda^{-1}\|\xi_h\|^2\leq C(h \|\bm{\sigma}\|_1 + h^s\|\bm{\sigma}\|_{s,\partial\Omega})\3bar \bm{e}_h\3bar.
\end{eqnarray}

To derive an error estimate for the pseudo-pressure $p$ in a $\lambda$-independent norm, we use the {\em inf-sup} condition \ref{lem:inf_sup} to obtain
\begin{eqnarray}\label{eq:inf_sup.2}
\beta \| \xi_h \|_0 \leq \sup_{\bm{v}_b \in [W_h^0(\T_h)]^d, \bm{v}_b\neq 0}\frac{b(\bm{v}_b,\xi_h)}{\3bar \bm{v}_b\3bar}.
\end{eqnarray}
From the error equation \ref{error_equation.1} we have
\begin{eqnarray}
 b(\bm{v}_b,\xi_h)&=&-a(\bm{e}_h,\bm{v}_b) + \varphi_{\bm{u},p}(\bm{v}_b),\quad \forall \; \bm{v}_b \in [W_h^0(\T_h)]^d.
\end{eqnarray}
Thus, from Lemma \ref{lem:ellip}, the error estimate \eqref{eq:error_estimates.h1.1}, and Lemma \ref{lem:H1} we have
\begin{eqnarray*}
 |b(\bm{v}_b,\xi_h)|&=& \alpha_2 \3bar \bm{e}_h\3bar \3bar \bm{v}_b\3bar+ |\varphi_{\bm{u},p}(\bm{v}_b)|\\
 &\leq& C(h\|\bm{u}\|_2 + h\|\bm{\sigma}\|_1 + h^s\|\bm{\sigma}\|_{s,\partial\Omega})\3bar \bm{v}_b\3bar.
\end{eqnarray*}
Substituting the above estimate into \eqref{eq:inf_sup.2} yields
\begin{eqnarray}\label{eq:error_estimates.h1.2}
\| \xi_h \|_0 \leq C(h\|\bm{u}\|_2 + h\|\bm{\sigma}\|_1+ h^s \|\bm{\sigma}\|_{s,\partial\Omega}).
\end{eqnarray}
Combining \eqref{eq:error_estimates.h1.1} with \eqref{eq:error_estimates.h1.2} gives rise to the error estimate \eqref{eq:error_estimates.h1.3}.
Finally, \eqref{eq:error_estimates.h1.4} stems from the usual triangle inequality, the estimate \eqref{eq:error_estimates.h1.3}, and the error estimate for the $L^2$ projections.
\end{proof}

\section{Error estimates in negative norms}\label{Section:L2}
For any given vector-valued function $\bm{\eta}$, consider the auxiliary problem of seeking $\bm{\psi}\in[H^1(\Omega)]^d$ and $\zeta\in L^2(\Omega)$ satisfying
\begin{eqnarray}
-\nabla \cdot(2\mu\varepsilon(\bm{\psi}) + \zeta I)=\bm{\eta},&& \qquad \text{ in } \Omega, \label{eq:duality.1}\\
  \nabla \cdot \bm{\psi} - \lambda^{-1}\zeta=0,&& \qquad \text{ in } \Omega, \label{eq:duality.2}\\
   \bm{\psi}=\bm{0}, && \qquad \text{ on } \Gamma_D. \label{eq:duality.3}\\
   (2\mu\varepsilon(\bm{\psi}) + \zeta I)\bm{n}=0, && \qquad \text{ on } \Gamma_N. \label{eq:duality.4}
\end{eqnarray}
Assume that the dual problem \eqref{eq:duality.1}-\eqref{eq:duality.4} has the $[H^{2+\epsilon}(\Omega)]^d \times H^{1+\epsilon}(\Omega)$-regularity in the sense that the solution $(\bm{\psi};\zeta)\in [H^{2+\epsilon}(\Omega)]^d\times H^{1+\epsilon}(\Omega)$ and satisfies the following a priori estimate:
\begin{eqnarray}\label{eq:dual_regularity}
\|\bm{\psi}\|_{2+\epsilon} + \|\zeta\|_{1+\epsilon} \leq C\|\bm{\eta}\|_{\epsilon},
\end{eqnarray}
with some $\epsilon\in [0, \frac12]$, see \cite{Dauge_EBVP_1988} for details.

\begin{theorem}
Assume that the solution of \eqref{eq:general_stokes.1}-\eqref{eq:general_stokes.2} is sufficiently smooth such that $(\bm{u};p)\in[H^{2}(\Omega)]^d\times H^1(\Omega)$. Let $(\bm{u}_h;p_h) \in [W_h(\T_h)]^d\times P_0(\T_h)$ be the finite element solution arising from \eqref{equ.elasticity-Mix-SWG}.
Under the regularity assumption \eqref{eq:dual_regularity} and $s=\epsilon+\frac12 \in [\frac12,1]$, there exists a constant $C$ such that
\begin{equation} \label{equ.thm.error.3}
\begin{split}
\|\S(Q_b^{(0)} \bm{u}) - \S(\bm{u}_b)\|_{-\epsilon} \leq& Ch^2 (\|\bm{u}\|_2+\|\bm{\varrho}\|_{1,\Gamma_N})
+ Ch^{1+s} \|\bm{\sigma}(\bm{u},p)\|_1 \\
& + Ch^{2s} \|\bm{\sigma}(\bm{u},p)\|_{s,\partial\Omega}.
\end{split}
\end{equation}
\end{theorem}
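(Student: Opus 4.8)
The plan is to use a duality (Aubin--Nitsche) argument built on the dual problem \eqref{eq:duality.1}--\eqref{eq:duality.4}. Write $\bm{e}_h = Q_b^{(0)}\bm{u} - \bm{u}_b$ and $\xi_h = \mbbQ_h^{(0)}p - p_h$ for the error functions, and note that the linearity of the extension operator gives $\S(Q_b^{(0)}\bm{u}) - \S(\bm{u}_b) = \S(\bm{e}_h)$. By the definition of the negative norm,
\[
\|\S(\bm{e}_h)\|_{-\epsilon} = \sup_{\bm{\eta}\neq 0}\frac{(\bm{\eta},\S(\bm{e}_h))}{\|\bm{\eta}\|_{\epsilon}},
\]
so it suffices to bound $(\bm{\eta},\S(\bm{e}_h))$ for an arbitrary $\bm{\eta}\in[H^{\epsilon}(\Omega)]^d$ and to invoke the regularity estimate \eqref{eq:dual_regularity} at the very end.

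First I would apply Lemma \ref{lem:error_estimates} to the dual solution $(\bm{w},\rho)=(\bm{\psi},\zeta)$, whose stress $\bmsigma_{\psi}=2\mu\varepsilon(\bm{\psi})+\zeta I$ satisfies $-\nabla\cdot\bmsigma_{\psi}=\bm{\eta}$ and, crucially, the homogeneous Neumann condition \eqref{eq:duality.4}; the latter makes the boundary functional $\zeta_{\bm{\psi},\zeta}$ vanish. Testing the resulting identity with $\bm{v}_b=\bm{e}_h$ expresses $(\bm{\eta},\S(\bm{e}_h))$ through the weak-strain pairing $2(\mu\varepsilon_w(Q_b^{(0)}\bm{\psi}),\varepsilon_w(\bm{e}_h))_h$ and the weak-divergence pairing $(\nabla_w\cdot\bm{e}_h,\mbbQ_h^{(0)}\zeta)_h$, up to the functional $\theta_{\bm{\psi},\zeta}(\bm{e}_h)$.

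The second step is to feed the primal error equations \eqref{error_equation.1}--\eqref{error_equation.2} into this expression. Choosing $\bm{v}_b=Q_b^{(0)}\bm{\psi}$ in \eqref{error_equation.1} (admissible since $\bm{\psi}|_{\Gamma_D}=0$ forces $Q_b^{(0)}\bm{\psi}\in[W_h^0(\T_h)]^d$) replaces the weak-strain pairing by $\varphi_{\bm{u},p}(Q_b^{(0)}\bm{\psi}) - S(\bm{e}_h,Q_b^{(0)}\bm{\psi}) - b(Q_b^{(0)}\bm{\psi},\xi_h)$. The decisive cancellation is between the two pressure-type terms: combining \eqref{projection.1} with the dual constraint \eqref{eq:duality.2} gives $\nabla_w\cdot(Q_b^{(0)}\bm{\psi})=\lambda^{-1}\mbbQ_h^{(0)}\zeta$, so that $b(Q_b^{(0)}\bm{\psi},\xi_h)=\lambda^{-1}(\mbbQ_h^{(0)}\zeta,\xi_h)$, while \eqref{error_equation.2} with $q=\mbbQ_h^{(0)}\zeta$ gives $b(\bm{e}_h,\mbbQ_h^{(0)}\zeta)=\lambda^{-1}(\xi_h,\mbbQ_h^{(0)}\zeta)$; these two coincide and drop out. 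This is exactly the mechanism that renders the estimate $\lambda$-robust (locking-free). After substituting $\varphi_{\bm{u},p}=S(Q_b^{(0)}\bm{u},\cdot)+\theta_{\bm{u},p}$, I am left with the clean identity
\[
(\bm{\eta},\S(\bm{e}_h)) = S(Q_b^{(0)}\bm{u},Q_b^{(0)}\bm{\psi}) + \theta_{\bm{u},p}(Q_b^{(0)}\bm{\psi}) - S(\bm{e}_h,Q_b^{(0)}\bm{\psi}) - \theta_{\bm{\psi},\zeta}(\bm{e}_h).
\]

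The final step is to estimate these four terms and divide by $\|\bm{\eta}\|_{\epsilon}$. The two terms containing $\bm{e}_h$ are controlled by the triple-bar norm: $|S(\bm{e}_h,Q_b^{(0)}\bm{\psi})|\leq \3bar\bm{e}_h\3bar\, S(Q_b^{(0)}\bm{\psi},Q_b^{(0)}\bm{\psi})^{1/2}$ and a Cauchy--Schwarz bound of $\theta_{\bm{\psi},\zeta}(\bm{e}_h)$ against the trace of $(\bmsigma_{\psi}-\mbbQ_h^{(0)}\bmsigma_{\psi})\bm{n}$, into which I substitute the $H^1$ bound $\3bar\bm{e}_h\3bar\leq Ch(\|\bm{u}\|_2+\|\bmsigma\|_1+h^{s-1}\|\bmsigma\|_{s,\partial\Omega})$ from Theorem \ref{thm.error.1}; the extra power of $h$ and the factor $\|\bm{\eta}\|_{\epsilon}$ then come from the approximation and trace estimates applied to the dual data $\bm{\psi},\zeta,\bmsigma_{\psi}$ together with the regularity \eqref{eq:dual_regularity}. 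The two consistency terms $S(Q_b^{(0)}\bm{u},Q_b^{(0)}\bm{\psi})$ and $\theta_{\bm{u},p}(Q_b^{(0)}\bm{\psi})$ involve only projections of the exact data and are bounded directly by the approximation inequalities of Section \ref{Section:TI}, with the boundary stress $\bmsigma\bm{n}=\bm{\varrho}$ on $\Gamma_N$ entering the boundary-edge contributions and producing the $\|\bm{\varrho}\|_{1,\Gamma_N}$ term. Matching the half-order gained from $H^{2+\epsilon}$-regularity to the choice $s=\epsilon+\tfrac12$ yields the orders $h^2$, $h^{1+s}$, and $h^{2s}$. I expect the main obstacle to be the sharp estimation of the boundary functionals $\theta$: extracting the extra half power of $h$ from the stabilizer consistency and from the trace of $(\bmsigma-\mbbQ_h^{(0)}\bmsigma)\bm{n}$ on $\partial T$, uniformly over general polytopal elements, is the delicate part, whereas the pressure cancellation above is the conceptual heart that secures the $\lambda$-independence.
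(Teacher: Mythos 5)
Your proposal follows essentially the same duality argument as the paper: the same dual problem \eqref{eq:duality.1}--\eqref{eq:duality.4}, the same application of Lemma \ref{lem:error_estimates} with $\bm{v}_b=\bm{e}_h$ (with $\zeta_{\bm{\psi},\zeta}$ vanishing by the homogeneous Neumann condition), the same pressure cancellation via \eqref{projection.1}, \eqref{eq:duality.2}, and \eqref{error_equation.2}, leading to exactly the paper's identity $(\S(\bm{e}_h),\bm{\eta})=\varphi_{\bm{u},p}(Q_b^{(0)}\bm{\psi})-\varphi_{\bm{\psi},\zeta}(\bm{e}_h)$, and the same final estimation using Lemma \ref{lem:H1}, Theorem \ref{thm.error.1}, the trace inequality, and the dual regularity with $s=\epsilon+\tfrac12$. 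The only difference is cosmetic (you estimate the $S$ and $\theta$ pieces of $\varphi_{\bm{\psi},\zeta}(\bm{e}_h)$ separately rather than citing Lemma \ref{lem:H1} wholesale), so the proposal is correct and matches the paper's proof.
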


\begin{proof}
Let $(\bm{\psi};\zeta)$ be the solution of \eqref{eq:duality.1}-\eqref{eq:duality.4} with $\bm{\eta} \in [H^\epsilon(\Omega)]^d$. It follows from Lemma \ref{lem:error_estimates}, namely the identity \eqref{Error_Estimates}, that
\begin{eqnarray}\label{eq:error_estimates.1}
2(\mu \varepsilon_w(Q_b^{(0)} \bm{\psi}),\varepsilon_w(\bm{v}_b))_h +
 (\nabla_w\cdot \bm{v}_b, \mbbQ_h^{(0)} \zeta)_h
=(\bm{\eta},\S(\bm{v}_b)) + \theta_{\bm{\psi},\zeta}(\bm{v}_b),
\end{eqnarray}
for any $\bm{v}_b \in [W_h^0(\T_h)]^d$. By choosing $\bm{v}_b=\bm{e}=Q_b^{(0)} \bm{u}-\bm{u}_b$ in \ref{eq:error_estimates.1}, we obtain
\begin{eqnarray}\label{eq:error_estimates.3}
(\S(\bm{e}), \bm{\eta})=a(Q_b^{(0)} \bm{\psi},\bm{e}) + b(\bm{e}, \mbbQ_h^{(0)} \zeta) -\varphi_{\bm{\psi},\zeta}(\bm{e}),
\end{eqnarray}
where
\begin{eqnarray}
\varphi_{\bm{\psi},\zeta}(\bm{e}) = \S(Q_b^{(0)}\bm{\psi},\bm{e})+
 \theta_{\bm{\psi},\zeta}(\bm{e}).
\end{eqnarray}

From the error equation \eqref{error_equation.2} we have
\begin{eqnarray}\label{eq:error_estimates.2}
b(\bm{e},\mbbQ_h^{(0)} \zeta)-d(\mbbQ_h^{(0)} \zeta,\xi)&=&0.
\end{eqnarray}
Next, from equation \eqref{projection.1} and \eqref{eq:duality.2}, we have
\begin{eqnarray*}
b(Q_b^{(0)} \bm{\psi},\xi) &= &\sum_{T\in \T_h}(\nabla_w\cdot Q_b^{(0)} \bm{\psi},\xi)_T
 = \sum_{T\in \T_h}(\mbbQ_h^{(0)}(\nabla\cdot  \bm{\psi}),\xi)_T\\
&= &\sum_{T\in \T_h}(\mbbQ_h^{(0)}\lambda^{-1}\zeta,\xi)_T
 =  d(\mbbQ_h^{(0)}\zeta,\xi).
\end{eqnarray*}
Substituting the above into \eqref{eq:error_estimates.2} yields
\begin{eqnarray}
b(\bm{e},\mbbQ_h^{(0)} \zeta)=b(Q_b^{(0)} \bm{\psi},\xi).
\end{eqnarray}

Combining the last equation with \eqref{eq:error_estimates.1}, we obtain
\begin{eqnarray}
(\S(\bm{e}), \bm{\eta})= a(Q_b^{(0)} \bm{\psi},\bm{e})+b(Q_b^{(0)} \bm{\psi},\xi)- \varphi_{\bm{\psi},\zeta}(\bm{e}),
\end{eqnarray}
which, together with the error equation \eqref{error_equation.1}, leads to
\begin{eqnarray}\label{eq:error_estimates.l2.2}
(\S(\bm{e}), \bm{\eta})=\varphi_{\bm{u},p}(Q_b^{(0)} \bm{\psi}) -\varphi_{\bm{\psi},\zeta}(\bm{e}).
\end{eqnarray}
The rest of the proof will be devoted to an analysis for the two terms on the right-hand side of \eqref{eq:error_estimates.l2.2}.

The following are two useful estimates in our mathematical analysis.
For any $\bm{v}_b\in[W(T)]^d$, we have
\begin{eqnarray}
&&\|Q_b^{(0)}\S(\bm{v}_b)-\bm{v}_b\|_{\partial T}\leq\|Q_b^{(0)}\varphi-\bm{v}_b\|_{\partial T},\; \forall \varphi \in [P_1(T)]^d, \label{eq:error_estimates.l2.3}\\
&&\|\S(\bm{v}_b)\|_{\partial T}\leq C\|\bm{v}_b\|_{\partial T}, \label{eq:error_estimates.l2.4}
\end{eqnarray}
The inequality \eqref{eq:error_estimates.l2.3} is a direct consequence of the definition of $\S(\bm{v}_b)$ as the projection of $\bm{v}_b$ into the space of linear functions with respect to the norm $\|\cdot\|_{\partial T}$.
The inequality \eqref{eq:error_estimates.l2.4} can be derived by using an equivalent form of  $\|\S(\bm{v}_b)\|_{\partial T}$ arising from the edge midpoints $M_i$.

Recall that the term $\varphi_{\bm{u},p}(Q_b^{(0)} \bm{\psi})$ is given by
\begin{eqnarray*}
& & \varphi_{\bm{u},p}(Q_b^{(0)} \bm{\psi})
= {S}(\bm{u}, Q_b^{(0)} \bm{\psi}) + \theta_{\bm{u},p}(Q_b^{(0)} \bm{\psi}),
\end{eqnarray*}
which can be estimated as follows:

\noindent
(1) For the stability term $ {S}(\bm{u},Q_b^{(0)}\bm{\psi})$, we have
\begin{eqnarray*}
|{S}(\bm{u}, Q_b^{(0)} \bm{\psi})|\leq |{S_1}(\bm{u}, Q_b^{(0)} \bm{\psi})| + \kappa|{S_2}(\bm{u}, Q_b^{(0)} \bm{\psi})|.
\end{eqnarray*}
 For ${S_1}(\bm{u}, Q_b^{(0)} \bm{\psi})$, from the orthogonality property of the extension operator $\S(\cdot)$ we have
\begin{equation*}
\begin{split}
|{S_1}(\bm{u}, Q_b^{(0)} \bm{\psi})|=&\ \bigg|h^{-1}\sum_T\langle Q_b^{(0)} \S(\bm{u})- Q_b^{(0)}\bm{u}, Q_b^{(0)}\S(Q_b^{(0)} \bm{\psi}) - Q_b^{(0)} \bm{\psi}\rangle_\pT \bigg| \\
=&\ \bigg|h^{-1}\sum_T\langle Q_b^{(0)}(\mbbQ_h^{(1)} \bm{u})- Q_b^{(0)}\bm{u}, Q_b^{(0)}\S(Q_b^{(0)} \bm{\psi}) - Q_b^{(0)} \bm{\psi}\rangle_\pT \bigg| \\
\leq &\ h^{-1}\sum_T \|\mbbQ_h^{(1)}\bm{u}-\bm{u}\|_\pT \|Q_b^{(0)}\S(Q_b^{(0)} \bm{\psi}) - Q_b^{(0)} \bm{\psi}\|_\pT.
\end{split}
\end{equation*}

From \eqref{eq:error_estimates.l2.3} with $\bm{v}_b=Q_b^{(0)} \bm{\psi}$ and $\bm{\phi}=\mbbQ_h^{(1)}\bm{\psi}$ we have
\begin{equation}\label{equ.EL2.1}
\begin{split}
|{S}_1(\bm{u}, Q_b^{(0)} \bm{\psi})| \leq &\ h^{-1}\sum_T \|\mbbQ_h^{(1)}\bm{u}-\bm{u}\|_\pT \|Q_b^{(0)}\S(Q_b^{(0)} \bm{\psi}) - Q_b^{(0)} \bm{\psi}\|_\pT\\
\leq &\ h^{-1}\sum_T \|\mbbQ_h^{(1)}\bm{u}-\bm{u}\|_\pT \|Q_b^{(0)}(\mbbQ_h^{(1)}\bm{\psi}) - Q_b^{(0)} \bm{\psi}\|_\pT\\
\leq& \ h^{-1} \left( \sum_{T}\|\mbbQ_h^{(1)} \bm{u}- \bm{u}\|_{\partial T}^2 \right)^{\frac{1}{2}}\left( \sum_{T}\|\mbbQ_h^{(1)} \bm{\psi}- \bm{\psi}\|_{\partial T}^2 \right)^{\frac{1}{2}} \\
\leq&\ Ch^{2}\|\bm{u}\|_2\|\bm{\psi}\|_{2}.
\end{split}
\end{equation}

For ${S_2}(\bm{u}, Q_b^{(0)} \bm{\psi})$, from equation \eqref{projection.2} we have
\begin{equation}\label{equ.EL2.2}
\begin{split}
|{S_2}(\bm{u}, Q_b^{(0)} \bm{\psi})|=&
 \left| \sum_{e\in \E_h}h_e^\gamma\langle [\nabla_{w,\bm{\tau}} (Q_b^{(0)}\bm{u}) ],[\nabla_{w,\bm{\tau}} (Q_b^{(0)}\bm{\psi}) ]\rangle_e\right|\\
 =&
 \left| \sum_{e\in \E_h}h_e^\gamma\langle [\mbbQ_h^{(0)}(\nabla \bm{u}) \bm{\tau}],[\mbbQ_h^{(0)}(\nabla \bm{\psi}) \bm{\tau}]\rangle_e\right|\\
  \leq & Ch^{2}\|\bm{u}\|_2\|\bm{\psi}\|_{2}.
\end{split}
\end{equation}

\noindent
(2) For the second term $\theta_{\bm{u},p}(Q_b^{(0)} \bm{\psi})$, with $\bm{\sigma}=2\mu\varepsilon(\bm{u})+pI$,
 we have from the boundary condition $\psi=0$ on $\Gamma_D$ and $\bm{\sigma}\bm{n}=\bm{\varrho}$ on $\Gamma_N$ that
 \begin{equation}\label{EQ:Oct:100.0}
\begin{split}
\left|\sum_{T\in\T_h}\langle Q_b^{(0)} \bm{\psi}-  \bm{\psi}, (\mathbb{Q}_h^{(0)}\bm{\sigma}-\bm{\sigma})\bm{n}\rangle_{\partial T}\right|
=&\left|\sum_{T\in\T_h}\langle \bm{\psi} - Q_b^{(0)} \bm{\psi}, \bm{\sigma}\bm{n}\rangle_{\partial T}\right| \\
=& |\langle \bm{\psi} - Q_b^{(0)} \bm{\psi}, \bm{\varrho}\rangle_{\Gamma_N }|\\
=& |\langle \bm{\psi} - Q_b^{(0)} \bm{\psi}, \bm{\varrho} -Q_b^{(0)} \bm{\varrho} \rangle_{\Gamma_N }|\\
\leq & C h^2 \|\bm{\psi}\|_{1,\Gamma_N} \|\bm{\varrho}\|_{1,\Gamma_N}.
\end{split}
\end{equation}

Next, using the fact that $\S(Q_b^{(0)}(\mbbQ_h^{(1)} \bm{\psi})) = \mbbQ_h^{(1)} \bm{\psi}$ we obtain
 \begin{equation}\label{EQ:Oct:100}
 \begin{split}
 &\left|\sum_{T\in\T_h}\langle \S(Q_b^{(0)} \bm{\psi})- \bm{\psi},(\mathbb{Q}_h^{(0)}\bm{\sigma}-\bm{\sigma})\bm{n}\rangle_{\partial T}\right|\\
 =&\left|\sum_{T\in\T_h}\langle \S(Q_b^{(0)} \bm{\psi})- \S(Q_b^{(0)}(\mbbQ_h^{(1)} \bm{\psi})) + \mbbQ_h^{(1)} \bm{\psi} - \bm{\psi},(\mathbb{Q}_h^{(0)}\bm{\sigma}-\bm{\sigma})\bm{n}\rangle_{\partial T}\right|\\
 \leq&
 \left(  \sum_{T} \| (\mathbb{Q}_h^{(0)}\bm{\sigma}-\bm{\sigma})\bm{n}\|_{\partial T}^2\right)^{\frac{1}{2}}\\
&\cdot \left( \sum_{T} \| \mbbQ_h^{(1)}\bm{\psi} - \bm{\psi}\|_{\partial T}^2 + \|\S(Q_b^{(0)}(\bm{\psi} - \mbbQ_h^{(1)}\bm{\psi} ))\|_{\partial T}^2\right)^{\frac{1}{2}}\\
\leq& Ch\|\bm{\sigma}\|_1\left( \sum_{T} \| \mbbQ_h^{(1)}\bm{\psi} - \bm{\psi}\|_{\partial T}^2\right)^{\frac{1}{2}}\\
\leq& Ch^{2}\|\bm{\sigma}\|_1\|\bm{\psi}\|_{2}.
 \end{split}
 \end{equation}

From \eqref{EQ:Oct:100.0} and \eqref{EQ:Oct:100} we arrive at the following estimate:
\begin{equation}\label{EQ:Oct:100.10}
|\theta_{\bm{u},p}(Q_b^{(0)}\bm{\psi})| \leq Ch^{2}\|\bm{\sigma}\|_1\|\bm{\psi}\|_{2} + C h^2 \|\bm{\psi}\|_{1,\Gamma_N} \|\bm{\varrho}\|_{1,\Gamma_N}.
\end{equation}

The estimates \eqref{equ.EL2.1}, \eqref{equ.EL2.2}, and \eqref{EQ:Oct:100.10}, together with the regularity \eqref{eq:dual_regularity}, collectively yield
\begin{eqnarray}\label{equ.elasticity.L2.1}
|\varphi_{\bm{u},p}(Q_b^{(0)} \bm{\psi})|
&\leq& Ch^{2}(\|\bm{u}\|_2+\|\bm{\sigma}(\bm{u},p)\|_1 +\|\bm{\varrho}\|_{1,\Gamma_N} )\|\bm{\psi}\|_{2}.
\end{eqnarray}

To estimate the term $\varphi_{\bm{\psi},\zeta}(\bm{e})$ in \eqref{eq:error_estimates.l2.2}, we apply the inequalities \eqref{H1.1}-\eqref{H1.3} of Lemma \ref{lem:H1} to obtain
\begin{eqnarray*}
|\varphi_{\bm{\psi},\zeta}(\bm{e})|
&\leq& C(h\|\bm{\psi}\|_{2} + h\|\bm{\sigma}(\bm{\psi},\zeta)\|_1+h^s\| \bm{\sigma}(\bm{\psi},\zeta)\|_{s,\partial\Omega})\vertiii{\bm{e}},
\end{eqnarray*}
where $s\in [0,1]$ is arbitrary and $\bm{\sigma}(\bm{\psi},\zeta) = 2\mu\varepsilon(\bm{\psi})+\zeta I$. From the following Sobolev trace inequality
$$
\| \bm{\sigma}(\bm{\psi},\zeta)\|_{s,\partial\Omega} \leq C \|\bm{\sigma}(\bm{\psi},\zeta)\|_{s+\frac12}
$$
we have
\begin{eqnarray}\label{equ.stokes.L2.2}
|\varphi_{\bm{\psi},\zeta}(\bm{e})|
&\leq& C(h\|\bm{\psi}\|_{2} + h\|\bm{\sigma}(\bm{\psi},\zeta)\|_1+h^s\| \bm{\sigma}(\bm{\psi},\zeta)\|_{s+\frac12})\vertiii{\bm{e}}.
\end{eqnarray}

Now substituting \eqref{equ.elasticity.L2.1} and \eqref{equ.stokes.L2.2} into \eqref{eq:error_estimates.l2.2} yields
\begin{eqnarray*}
|(\S(\bm{e}), \bm{\eta})|
&\leq &
Ch^{2}(\|\bm{u}\|_2+\|\bm{\sigma}(\bm{u},p)\|_1 +\|\bm{\varrho}\|_{1,\Gamma_N})\|\bm{\psi}\|_{2}\\
&& + C(h\|\bm{\psi}\|_{2} + h\|\bm{\sigma}(\bm{\psi},\zeta)\|_1+h^s\| \bm{\sigma}(\bm{\psi},\zeta)\|_{s+\frac12})\vertiii{\bm{e}}\\
&\leq &
C\left\{h^{2}(\|\bm{u}\|_2+\|\bm{\sigma}(\bm{u},p)\|_1 +\|\bm{\varrho}\|_{1,\Gamma_N})
+ h \vertiii{\bm{e}}\right\} (\|\bm{\psi}\|_{2}+\|\zeta\|_1)\\
&& + Ch^s\vertiii{\bm{e}}\
\|\bm{\sigma}(\bm{\psi},\zeta)\|_{s+\frac12}.
\end{eqnarray*}
The regularity assumption \eqref{eq:dual_regularity} implies the validity of the following estimate:
$$
\|\bm{\sigma}(\bm{\psi},\zeta)\|_{s+\frac12} \leq C (\|\bm{\psi}\|_{s+\frac32}+\|\zeta\|_{s+\frac12})\leq C \|\bm{\eta}\|_{s-\frac12}.
$$
By combining the last two estimates we arrive at
\begin{eqnarray*}
|(\S(\bm{e}), \bm{\eta})|
&\leq &
C\left\{h^{2}(\|\bm{u}\|_2+\|\bm{\sigma}(\bm{u},p)\|_1 +\|\bm{\varrho}\|_{1,\Gamma_N})
+ h \vertiii{\bm{e}}\right\} \|\bm{\eta}\|_0 + Ch^s\vertiii{\bm{e}}\
\|\bm{\eta}\|_{s-\frac12},
\end{eqnarray*}
which, together with the error estimate \eqref{eq:error_estimates.h1.3}, leads to
\begin{eqnarray*}
|(\S(\bm{e}), \bm{\eta})|
&\leq &
Ch^{1+s}\left(h^{1-s}(\|\bm{u}\|_2+\|\bm{\sigma}(\bm{u},p)\|_1 +\|\bm{\varrho}\|_{1,\Gamma_N})
+ \|\bm{\sigma}(\bm{u},p)\|_{s,\partial\Omega}\right) \|\bm{\eta}\|_0 \\
&&+ Ch^{1+s}\left(\|\bm{\sigma}(\bm{u},p)\|_1 + h^{s-1}\|\bm{\sigma}(\bm{u},p)\|_{s,\partial\Omega}\right)
\|\bm{\eta}\|_{s-\frac12}
\end{eqnarray*}
for all $\bm{\eta}\in H^{s-\frac12}(\Omega)$. The desired error estimate \eqref{equ.thm.error.3} stems from the above estimate without any difficulty. This completes the proof of the theorem.
\end{proof}

\section{Some technical inequalities}\label{Section:TI}
In this section, we present some technical inequalities that support the error analysis established in previous sections.

Recall that $\T_h$ is a shape-regular finite element partition of $\Omega$. It is known that
there exists a constant $C>0$ such that, for any edge $e \subset T$ with $T\in \T_h$, the following trace inequality holds true
\begin{eqnarray}\label{trace_inq}
\|\phi\|_e^2 \leq C(h^{-1}_T\|\phi\|^2_T + h_T\|\nabla \phi\|^2_T),\quad \forall \phi \in H^1(T),
\end{eqnarray}
where $h_T$ is the size of $T$.
Furthermore, in the polynomial space $P_j(T),j \geq0$, we have from the inverse inequality that
\begin{eqnarray}
\|\phi\|_e^2 \leq Ch^{-1}_T\|\phi\|^2_T ,\quad \forall \phi \in P_j(T), T \in \T_h.
\end{eqnarray}

\begin{lemma}
Assume that $\T_h$ is a finite element partition of $\Omega$ satisfying the shape regularity assumption as defined in \cite{wy3655}. For any $\bm{w}\in [H^{2}(\Omega)]^d$, $\rho\in H^1(\Omega)$, and $0\leq m \leq 1$, there holds
\begin{eqnarray}
&&\sum_{T\in \T_h}h^{2m}_T\| \bm{w} -\mbbQ_h^{(1)}\bm{w}\|^2_{m,T}\leq Ch^{4}\|\bm{w}\|^2_{2},\label{eq:support.1}\\
&&\sum_{T\in \T_h}h^{2m}_T\| \varepsilon(\bm{w}) -\mbbQ_h^{(0)}\varepsilon(\bm{w})\|^2_{m,T}\leq Ch^{2}\|\bm{w}\|^2_{2}, \label{eq:support.2}\\
&&\sum_{T\in \T_h}h^{2m}_T\| \rho -\mbbQ_h^{(0)}\rho\|^2_{m,T}\leq Ch^{2}\|\rho\|^2_{1}, \label{eq:support.3}\\
&&\sum_{e\in \E_h}h_e^\gamma\|[\mbbQ_h^{(0)}\bm{w}]\|_e^2 \leq C h^3\|\nabla \bm{w}\|^2_{0} + h^2 \|\bm{w}\|_{0,\partial\Omega}^2,\label{eq:support.4}
\end{eqnarray}
where $\gamma=1$ on interior edges and $\gamma=2$ on boundary edges.
\end{lemma}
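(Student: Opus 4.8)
The plan is to establish the four bounds separately, treating \eqref{eq:support.1}--\eqref{eq:support.3} as routine consequences of element-wise polynomial approximation theory, and reserving the genuine work for the jump estimate \eqref{eq:support.4}.

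For \eqref{eq:support.1}--\eqref{eq:support.3} the idea is to invoke the Bramble--Hilbert / Dupont--Scott approximation estimates element by element and then weight and sum. On each $T\in\T_h$ the projection $\mbbQ_h^{(1)}$ reproduces $P_1(T)$, so for $\bm{w}\in[H^2(\Omega)]^d$ one has $\|\bm{w}-\mbbQ_h^{(1)}\bm{w}\|_{m,T}\le Ch_T^{2-m}|\bm{w}|_{2,T}$ for $m=0,1$; multiplying by $h_T^{2m}$ cancels the $m$-dependence and leaves $Ch_T^4|\bm{w}|_{2,T}^2$, whose sum over $T$ yields \eqref{eq:support.1}. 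The bounds \eqref{eq:support.2} and \eqref{eq:support.3} follow the same pattern with $\mbbQ_h^{(0)}$ reproducing constants, using $\|\phi-\mbbQ_h^{(0)}\phi\|_{m,T}\le Ch_T^{1-m}|\phi|_{1,T}$ applied to $\phi=\varepsilon(\bm{w})\in[H^1(\Omega)]^{d\times d}$ (with $|\varepsilon(\bm{w})|_{1,T}\le C\|\bm{w}\|_{2,T}$) and to $\phi=\rho\in H^1(\Omega)$; in each case the weight $h_T^{2m}$ absorbs the seminorm scaling and summation produces the $h^2$ rate. The intermediate orders $0<m<1$ are recovered by interpolating between $m=0$ and $m=1$.

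The heart of the lemma is \eqref{eq:support.4}, which I would split over interior and boundary edges. On an interior edge $e$ shared by $T_1,T_2$, the continuity of $\bm{w}$ lets me write $[\mbbQ_h^{(0)}\bm{w}]_e=(\mbbQ_h^{(0)}\bm{w}-\bm{w})|_{T_1}-(\mbbQ_h^{(0)}\bm{w}-\bm{w})|_{T_2}$, so the jump is a difference of two element approximation errors. Applying the trace inequality \eqref{trace_inq} to each term, together with $\nabla\mbbQ_h^{(0)}\bm{w}=0$ and the $P_0$ estimate $\|\mbbQ_h^{(0)}\bm{w}-\bm{w}\|_T\le Ch_T\|\nabla\bm{w}\|_T$, yields $\|\mbbQ_h^{(0)}\bm{w}-\bm{w}\|_e^2\le Ch_T\|\nabla\bm{w}\|_T^2$ and hence a bound for $\|[\mbbQ_h^{(0)}\bm{w}]\|_e^2$ in terms of $\|\nabla\bm{w}\|_{\omega_e}^2$ on the patch $\omega_e=T_1\cup T_2$. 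The edge weight $h_e^\gamma$ then supplies the power of $h$ needed to match the gradient term on the right-hand side of \eqref{eq:support.4}, after using shape regularity to bound the finite overlap of the patches so that the local gradient norms sum back to $\|\nabla\bm{w}\|_0^2$. On a boundary edge the jump reduces to the one-sided trace $\mbbQ_h^{(0)}\bm{w}|_T$; splitting $\mbbQ_h^{(0)}\bm{w}=(\mbbQ_h^{(0)}\bm{w}-\bm{w})+\bm{w}$ and applying \eqref{trace_inq} produces a $\|\nabla\bm{w}\|$ piece (absorbable) plus the boundary piece $h_e^2\|\bm{w}\|_{0,e}^2$, whose sum is precisely $h^2\|\bm{w}\|_{0,\partial\Omega}^2$.

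I expect the jump estimate \eqref{eq:support.4} to be the main obstacle, since, unlike the purely element-local bounds \eqref{eq:support.1}--\eqref{eq:support.3}, it couples two elements across each interior edge. The delicate points are: correctly combining the two one-sided contributions through the continuity of $\bm{w}$; tracking the precise powers of $h$ carried by the edge weights $h_e^\gamma$ (with $\gamma=1$ on interior and $\gamma=2$ on boundary edges); and using the shape-regularity hypothesis to control the overlap of the patches $\omega_e$ so that no spurious mesh-dependent constant survives. Once these bookkeeping issues are handled, the gradient term and the boundary term in \eqref{eq:support.4} emerge directly from the interior and boundary edge families, respectively.
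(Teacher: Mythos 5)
Your proposal is correct and takes essentially the same route as the paper: the bounds \eqref{eq:support.1}--\eqref{eq:support.3} are standard element-wise approximation estimates (the paper simply cites the literature for them), and for \eqref{eq:support.4} you use exactly the paper's argument --- rewriting the jump on an interior edge, via the continuity of $\bm{w}$, as a difference of two one-sided projection errors, applying the trace inequality \eqref{trace_inq} together with the $P_0$ approximation bound $\|\mbbQ_h^{(0)}\bm{w}-\bm{w}\|_T\leq Ch_T\|\nabla\bm{w}\|_T$, splitting $\mbbQ_h^{(0)}\bm{w}=(\mbbQ_h^{(0)}\bm{w}-\bm{w})+\bm{w}$ on boundary edges, and summing with shape regularity. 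One caveat, shared equally by the paper's own proof: this argument yields $Ch^{\gamma+1}\|\nabla\bm{w}\|^2$ per edge, hence $Ch^{2}\|\nabla\bm{w}\|_0^2$ (not $Ch^{3}$) for the interior contribution when $\gamma=1$, so the power $h^3$ displayed in \eqref{eq:support.4} is not what either derivation actually delivers; this is an inaccuracy in the statement itself (harmless for its later use in Lemma \ref{lem:H1}) rather than a defect of your approach.
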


\begin{proof}
Let us outline a proof for \eqref{eq:support.4} only, as the other three can be found in existing literature. To this end, for any interior edge $e$ shared by two elements $T_1$ and $T_2$, we have
$$
[\mbbQ_h^{(0)}\bm{w}]\big|_e = (\mbbQ_h^{(0)}\bm{w}\big|_{T_1\cap e} - \bm{w}\big|_e) + (\bm{w}\big|_e - \mbbQ_h^{(0)}\bm{w}\big|_{T_2\cap e} ).
$$
It follows from the trace inequality \eqref{trace_inq} that
\begin{equation}\label{EQ:Oct:001}
\begin{split}
h_e^\gamma\|[\mbbQ_h^{(0)}\bm{w}]\|_e^2 &\leq Ch^{\gamma-1} \|\mbbQ_h^{(0)}\bm{w} - \bm{w}\|_{T_1\cup T_2}^2 + Ch^{\gamma+1} \|\nabla \bm{w}\|_{T_1\cup T_2}^2\\
& \leq Ch^{\gamma+1} \| \nabla\bm{w}\|_{T_1\cup T_2}^2.
\end{split}
\end{equation}
For boundary edge $e\in(\partial\Omega)\cap (\pT)$, we have
$$
[\mbbQ_h^{(0)}\bm{w}]\big|_e = (\mbbQ_h^{(0)}\bm{w}\big|_{T\cap e} - \bm{w}\big|_e) + \bm{w}\big|_e.
$$
Again, from the trace inequality \eqref{trace_inq} we have
\begin{equation}\label{EQ:Oct:002}
\begin{split}
h^\gamma\|[\mbbQ_h^{(0)}\bm{w}]\|_e^2 &\leq Ch^{\gamma-1} \|\mbbQ_h^{(0)}\bm{w} - \bm{w}\|_{T}^2 + Ch^{\gamma+1} \| \nabla\bm{w}\|_{T}^2 + C h^\gamma \|\bm{w}\|_e^2\\
&\leq Ch^{\gamma+1} \| \nabla\bm{w}\|_{T}^2 + C h^\gamma \|\bm{w}\|_e^2.
\end{split}
\end{equation}
Summing \eqref{EQ:Oct:001} and \eqref{EQ:Oct:002} over all the edges yields the desired estimate \eqref{eq:support.4}. This completes the proof.
\end{proof}

The following is a characterization of the space of rigid motions as the kernel of the strain operator.

\begin{lemma}
Let $\Omega$ be an open bounded and connected domain in $\mathbb{R}^d$.
The space of rigid motions $RM(\Omega)$ is identical to the kernel of the strain tensor operator; i.e., for $\bm{v}\in[H^1(\Omega)]^d$,
there holds $\varepsilon(\bm{v})=0$  if and only if $\bm{v}\in RM(\Omega)$.
\end{lemma}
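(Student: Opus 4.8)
The plan is to prove the two implications separately, with essentially all the work in the forward direction $\varepsilon(\bm{v})=0 \Rightarrow \bm{v}\in RM(\Omega)$. I would first dispatch the reverse implication, which is immediate: if $\bm{v}=a+\eta x$ with $a\in\mathbb{R}^d$ and $\eta\in so(d)$, then $\nabla\bm{v}=\eta$ is a constant matrix and $\varepsilon(\bm{v})=\frac12(\eta+\eta^T)=0$ by the skew-symmetry of $\eta$.

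For the forward direction, the key step is to establish a purely algebraic differential identity expressing the second derivatives of $\bm{v}$ through first derivatives of $\varepsilon(\bm{v})$. Writing $\varepsilon_{ij}=\frac12(\partial_i v_j+\partial_j v_i)$ and introducing the skew-symmetric part $\omega_{ij}=\frac12(\partial_i v_j-\partial_j v_i)$, a direct manipulation that only uses commutativity of mixed distributional derivatives yields
\[
\partial_k\omega_{ij}=\partial_i\varepsilon_{jk}-\partial_j\varepsilon_{ik},\qquad 1\le i,j,k\le d.
\]
I would verify this identity in $\mathcal{D}'(\Omega)$, valid for any $\bm{v}\in[H^1(\Omega)]^d$, since both sides are second-order distributional derivatives of the components $v_i$ and the equality reduces to Schwarz symmetry of mixed partials.

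With the identity available, I would argue as follows. The hypothesis $\varepsilon(\bm{v})=0$ makes the right-hand side vanish, so $\partial_k\omega_{ij}=0$ in $\mathcal{D}'(\Omega)$ for all indices. Since $\Omega$ is open and connected and each $\omega_{ij}\in L^2(\Omega)$, a vanishing distributional gradient forces $\omega_{ij}\equiv\eta_{ij}$ for a constant skew-symmetric matrix $\eta=\{\eta_{ij}\}$. Because $\varepsilon(\bm{v})=0$, the full gradient coincides with its skew part, so $\nabla\bm{v}=\eta$; hence $\nabla(\bm{v}-\eta x)=0$ on the connected set $\Omega$, which gives $\bm{v}-\eta x\equiv a$ for some constant $a\in\mathbb{R}^d$, i.e.\ $\bm{v}=a+\eta x\in RM(\Omega)$.

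The main obstacle I anticipate is handling the low regularity: since $\bm{v}\in[H^1(\Omega)]^d$ need not be classically twice differentiable, both the identity for $\partial_k\omega_{ij}$ and the implication ``vanishing gradient $\Rightarrow$ constant'' must be read distributionally. The clean route is to test against $C_c^\infty(\Omega)$ and to invoke the standard fact that an $L^2$ function whose distributional gradient is zero on a connected open set is constant. I would also flag that connectedness of $\Omega$ is indispensable, as on a disconnected domain $\bm{v}$ could realize a distinct rigid motion on each component.
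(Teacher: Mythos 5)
Your proof is correct, and it rests on the same classical skeleton as the paper's argument --- a Schwarz-symmetry identity for mixed distributional derivatives plus connectedness of $\Omega$ --- but the pivot is genuinely different. The paper works directly with the second derivatives of $\bm{v}$, via the identity $\partial_j\partial_k v_i = \partial_j\varepsilon_{ik}(\bm{v})+\partial_k\varepsilon_{ij}(\bm{v})-\partial_i\varepsilon_{jk}(\bm{v})$; the hypothesis $\varepsilon(\bm{v})=0$ then kills all second derivatives at once, the paper concludes $\bm{v}\in[P_1(\Omega)]^d$, and skew-symmetry of the gradient is enforced only at the end from $\varepsilon(\bm{a}+\eta x)=\frac12(\eta+\eta^t)=0$. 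You instead differentiate the skew part $\omega_{ij}$, obtaining $\partial_k\omega_{ij}=\partial_i\varepsilon_{jk}-\partial_j\varepsilon_{ik}$ (the two identities are equivalent: yours follows by antisymmetrizing the paper's in $i,j$), conclude that $\omega$ is a constant matrix --- automatically skew-symmetric since each $\omega_{ij}$ is antisymmetric in its indices --- and then integrate $\nabla\bm{v}=\eta$ once more. What your route buys is lower-order machinery: you only ever invoke the fact that an $L^2$ function with vanishing distributional gradient on a connected open set is constant, applied twice (first to $\omega_{ij}$, then to $\bm{v}-\eta x$), whereas the paper needs the stronger, though still standard, fact that a distribution whose second derivatives all vanish is an affine polynomial --- a point the paper passes over silently at $H^1$ regularity, and which your write-up handles more carefully. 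The paper's route is marginally shorter on the page; yours is the cleaner argument at the stated regularity. The only bookkeeping item to watch in your version is the index convention relating $\nabla\bm{v}$ to $\omega$ (whether the constant matrix is $\eta$ or $\eta^t$), which is harmless since constant skew-symmetric matrices are closed under transposition.
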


\begin{proof}
For any $\bm{v} \in RM(\Omega)$, there exist $\bm{a} \in \mathbb{R}^d$ and a skew-symmetric $d\times d$ matrix $\eta$ such that $\bm{u}= \bm{a} + \eta x$. It is easy to check that $\varepsilon(\bm{v})=0$.

For any $\bm{v}\in [H^1(\Omega)]^d$, it is not hard to verify the following identity:
\begin{eqnarray}
 \partial_j\partial_k \bm{v}_i
=\partial_j\varepsilon_{ik}(\bm{v})
+\partial_k\varepsilon_{ij}(\bm{v})
-\partial_i\varepsilon_{jk}(\bm{v}).
\end{eqnarray}
Thus, for any $\bm{v}\in [H^1(\Omega)]^d$ satisfying $\epsilon(\bm{v})=0$, we have
$\partial_j\partial_k \bm{v}_i=0$ and hence $\bm{v} \in [P_1(\Omega)]^d$.
It follows that there exist $\bm{a}\in\mathbb{R}^d $ and $\eta \in \mathbb{R}^{d\times d}$ such that $\bm{v}= \bm{a} + \eta x$.
Since $\varepsilon(\bm{v})=0$, we get
\begin{eqnarray}
0 = \varepsilon(\bm{v})=\varepsilon(\bm{a} + \eta x)
=\frac{1}{2}(\nabla(\bm{a} + \eta x) + \nabla(\bm{a} + \eta x)^t)
=\frac{1}{2}(\eta + \eta^t).
\end{eqnarray}
It follows that $\eta =- \eta^t$, which means that $\eta$ is skew-symmetric, and hence $\bm{v}\in RM(\Omega)$.
\end{proof}


\begin{lemma}\label{lem:H1}
Assume that the finite element partition $\T_h$ of $\Omega$ is shape regular, $\bm{w}\in [H^2(\Omega)]^d$, and $\rho \in H^1(\Omega)$. Then for any $\bm{v}_b \in [W_h^0(\T_h)]^d$ and $s\in [0,1]$, the following estimates hold true
\begin{eqnarray}
&&|S(Q_b^{(0)}{\bm{w}},\bm{v}_b)|\leq Ch \|\bm{w}\|_2\3bar\bm{v}_b\3bar, \label{H1.1}\\
&&|\theta_{\bm{w},\rho}(\bm{v}_b)|\leq C(h\|\bm{\sigma}\|_1 + h^s \|\bm{\sigma}\|_{s,\partial\Omega}) \3bar\bm{v}_b\3bar, \label{H1.3}
\end{eqnarray}
where $\theta_{\bm{w},\rho}(\bm{v}_b)$ is given as in \eqref{Error_Estimates.1} and $\bm{\sigma}=2\mu\varepsilon(\bm{w})+ \rho I$.
\end{lemma}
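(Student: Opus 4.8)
The plan is to establish the two bounds \eqref{H1.1} and \eqref{H1.3} separately. In both cases I would write out the defining expression, apply Cauchy--Schwarz edge by edge, and then control the two resulting factors: one factor carries the approximation error of the smooth data, and the other is absorbed into the semi-norm $\3bar\cdot\3bar$ of \eqref{EQ:Semi-norm}. It is convenient to split the stabilizer as $S=S_1+\kappa S_2$, where $S_1=\sum_{T}S_T$ is the least-squares part from \eqref{Def-ST-poly} and $\kappa S_2$ is the tangential-jump part in \eqref{eq:stabilizer}.

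For \eqref{H1.1} I would treat the two pieces of $S(Q_b^{(0)}\bm w,\bm v_b)$ in turn. Writing $S_1$ in the projection form of \eqref{Def-ST-poly} and invoking the minimization property \eqref{eq:error_estimates.l2.3} with the linear comparison function $\mbbQ_h^{(1)}\bm w$, the factor $\|Q_b^{(0)}\S(Q_b^{(0)}\bm w)-Q_b^{(0)}\bm w\|_{\partial T}$ is dominated by the projection error $\|\mbbQ_h^{(1)}\bm w-\bm w\|_{\partial T}$; a Cauchy--Schwarz over the elements, the trace inequality \eqref{trace_inq}, and \eqref{eq:support.1} then give $\sum_T\|\mbbQ_h^{(1)}\bm w-\bm w\|_{\partial T}^2\le Ch^3\|\bm w\|_2^2$, while the conjugate factor assembles the stabilizer term $h^{-1}_T\|Q_b^{(0)}\S(\bm v_b)-\bm v_b\|^2_{\partial T}$ appearing in $\3bar\bm v_b\3bar^2$. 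This yields $|S_1|\le Ch\|\bm w\|_2\3bar\bm v_b\3bar$. For $\kappa S_2$ I would use \eqref{projection.2} to rewrite $\nabla_{w,\bm\tau}(Q_b^{(0)}\bm w)=(\mbbQ_h^{(0)}\nabla\bm w)\bm\tau$, so that its jump equals the jump of $\mbbQ_h^{(0)}\nabla\bm w-\nabla\bm w$; applying \eqref{eq:support.4} to $\nabla\bm w$ (together with the trace theorem for the resulting boundary term) bounds $\sum_e h_e^\gamma\|[\nabla_{w,\bm\tau}(Q_b^{(0)}\bm w)]_e\|_e^2$ by $Ch^2\|\bm w\|_2^2$, and the conjugate factor is again part of $\3bar\bm v_b\3bar$. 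Summing the two pieces proves \eqref{H1.1}.

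For \eqref{H1.3} I would start from \eqref{Error_Estimates.1} and apply Cauchy--Schwarz with the balancing weights $h_T^{\mp1/2}$:
\[
|\theta_{\bm w,\rho}(\bm v_b)|\le\Big(\sum_{T\in\T_h} h_T^{-1}\|\S(\bm v_b)-\bm v_b\|_{\partial T}^2\Big)^{1/2}\Big(\sum_{T\in\T_h} h_T\|(\bm\sigma-\mbbQ_h^{(0)}\bm\sigma)\bn\|_{\partial T}^2\Big)^{1/2}.
\]
The $\bm\sigma$-factor is controlled by the trace inequality \eqref{trace_inq} and the approximation estimates \eqref{eq:support.2}--\eqref{eq:support.3}; carrying the boundary regularity at a general level $s\in[0,1]$, rather than committing to $H^1$ up to $\partial\Omega$, yields precisely the bulk term $Ch\|\bm\sigma\|_1$ together with the fractional boundary term $Ch^s\|\bm\sigma\|_{s,\partial\Omega}$ that the later duality argument requires. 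For the $\bm v_b$-factor I would decompose, edge by edge, $\S(\bm v_b)-\bm v_b=(\S(\bm v_b)-Q_b^{(0)}\S(\bm v_b))+(Q_b^{(0)}\S(\bm v_b)-\bm v_b)$; because the edge average of a linear function is its midpoint value, the two summands are $L^2(\partial T)$-orthogonal, and the second summand is exactly the stabilizer piece of $\3bar\bm v_b\3bar$.

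The main obstacle is the first summand $\S(\bm v_b)-Q_b^{(0)}\S(\bm v_b)$, the edge-oscillation of the linear extension, which obeys $\|\S(\bm v_b)-Q_b^{(0)}\S(\bm v_b)\|_{\partial T}^2\le Ch_T^2\|\nabla_{\bm\tau}\S(\bm v_b)\|_{\partial T}^2$ and hence forces control of the \emph{full} weak gradient $\sum_T\|\nabla_w\bm v_b\|_T^2$, including its skew-symmetric (rotational) component, by the semi-norm. This rotational part is invisible to $\varepsilon_w$ and is locally unconstrained, so a purely local bound is impossible; it is exactly the component that the tangential stabilizer was designed to tame. I therefore expect the crux of the proof to be a discrete Korn-type inequality $\sum_T\|\nabla_w\bm v_b\|_T^2\le C\3bar\bm v_b\3bar^2$ with $C$ independent of $h$, which I would prove by: (i) using $S_1$ to pass from $\nabla\S(\bm v_b)$ to $\nabla_w\bm v_b$ up to a norm-controlled remainder; (ii) bounding the jump of the element-wise rotation across each interior edge by the tangential jump $[\nabla_{w,\bm\tau}\bm v_b]_e$ and the jump of $\varepsilon_w$; and (iii) propagating these differences through a connectivity/discrete-Poincar\'e argument anchored at $\Gamma_D$, where $\bm v_b=0$ pins down the rigid rotation via the characterization $RM(\Omega)=\ker\varepsilon$. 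Establishing the $h$-uniformity of the constant in (iii), given the weak $h_e$-weight on the tangential jumps, is the delicate point; once it is in hand, the first summand is bounded by $C\3bar\bm v_b\3bar$ and \eqref{H1.3} follows at once.
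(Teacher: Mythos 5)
Your argument for \eqref{H1.1} is essentially the paper's own: split $S=S_1+\kappa S_2$, replace $\S(Q_b^{(0)}\bm{w})$ by $\mbbQ_h^{(1)}\bm{w}$ via the least-squares/orthogonality property of $\S$, and conclude with the trace inequality together with \eqref{eq:support.1} and \eqref{eq:support.4}; that part is sound. The problem is \eqref{H1.3}, and it originates in your very first move. By applying Cauchy--Schwarz element by element you decouple $\bm{v}_b$ from $\bm{\sigma}$ before the two copies of each interior edge have had a chance to cancel, and you then (correctly) conclude that the leftover term $\S(\bm{v}_b)-Q_b^{(0)}\S(\bm{v}_b)$ forces you to bound the \emph{full} weak gradient, skew part included, i.e.\ a discrete Korn-type inequality $\sum_T\|\nabla_w\bm{v}_b\|_T^2\leq C\3bar\bm{v}_b\3bar^2$ with $C$ independent of $h$. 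You name this as the crux but never prove it, and it is not a routine missing detail: up to stabilizer-controlled perturbations, $\S(\bm{v}_b)$ is a piecewise linear field that is continuous only at edge midpoints, i.e.\ a Crouzeix--Raviart-type field, for which broken Korn inequalities are known to fail absent extra jump control. This is precisely the rigid-motion obstruction that puts the $P_1/P_0$ element outside the theory of \cite{WangWang_2016}, and it is visible in the paper's own numerics ($\kappa=0$ on triangles gives a singular system). The only available repair terms are the tangential jumps, which carry the \emph{vanishing} weights $h_e^\gamma$; Lemma \ref{LEMMA:October:20:01} extracts positivity of $\3bar\cdot\3bar$ from them, but positivity for fixed $h$ is far weaker than the $h$-uniform bound your step (iii) requires. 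As written, your proof of \eqref{H1.3} therefore rests on an unproven, and at best very delicate, global inequality.

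The paper's proof shows this obstacle is avoidable, which is the instructive contrast. It keeps the pairing intact, writes on each edge $\S(\bm{v}_b)-Q_b^{(0)}\S(\bm{v}_b)=(x-x_m)\partial_{\bm{\tau}}\S(\bm{v}_b)$, and observes that, modulo edge-wise constants (to which this factor is orthogonal), the other factor $(\mbbQ_h^{(0)}\bm{\sigma}-\bm{\sigma})\bm{n}$ is the single-valued function $-\bm{\sigma}\bm{n}$, whose sign flips with $\bm{n}$ across interior edges. Summing over element boundaries therefore telescopes into a sum over edges involving only the jumps $[\partial_{\bm{\tau}}\S(\bm{v}_b)]_e$ on interior edges, and the full trace only on boundary edges --- exactly where the seminorm \eqref{EQ:Semi-norm} carries the stronger weight $h_e^2$, which is also the sole source of the $h^s\|\bm{\sigma}\|_{s,\partial\Omega}$ term (a term whose origin is unclear in your decoupled estimate). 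These jump quantities are controlled edge by edge by the tangential-jump and stabilizer parts of $\3bar\bm{v}_b\3bar$, with precisely the weights present in the seminorm, so no Korn inequality, connectivity argument, or anchoring at $\Gamma_D$ is ever needed. To salvage your outline you would have to postpone Cauchy--Schwarz until after this cancellation has been exploited.
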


\begin{proof}
From the definition of the stabilizer $S(\cdot,\cdot)$, we have
\begin{eqnarray*}
&& |S(Q_b^{(0)}{\bm{w}},\bm{v}_b)|\\
&=&\left| \sum_{T \in \T_h} h^{-1}\langle Q_b^{(0)} \S(Q_b^{(0)}\bm{w})-Q_b^{(0)}\bm{w},Q_b^{(0)} \S(\bm{v}_b)- \bm{v}_b\rangle_{\partial T}
+\kappa\sum_{e\in \E_h}h_e^\gamma\langle [\nabla_{w,\bm{\tau}} Q_b^{(0)}\bm{w}],[\nabla_{w,\bm{\tau}} \bm{v}_b]\rangle_e  \right|\\
&\leq& \left| \sum_{T \in \T_h} h^{-1}\langle Q_b^{(0)} \S(Q_b^{(0)}\bm{w})-Q_b^{(0)}\bm{w},Q_b^{(0)} \S(\bm{v}_b)- \bm{v}_b\rangle_{\partial T}\right|
     + \kappa\left|\sum_{e\in \E_h}h_e^\gamma\langle [\nabla_{w,\bm{\tau}} Q_b^{(0)}\bm{w}],[\nabla_{w,\bm{\tau}} \bm{v}_b]\rangle_e  \right|.
\end{eqnarray*}
To derive \eqref{H1.1}, we use  \eqref{eq:support.1}, the Cauchy-Schwarz inequality, the inverse inequality and the trace inequality to obtain
\begin{eqnarray*}
&&\left| \sum_{T \in \T_h} h^{-1}\langle Q_b^{(0)} \S(Q_b^{(0)}\bm{w})-Q_b^{(0)}\bm{w},Q_b^{(0)} \S(\bm{v}_b)- \bm{v}_b\rangle_{\partial T}\right|\\
&=&\left| \sum_{T \in \T_h} h^{-1}\langle Q_b^{(0)} (\mbbQ_h^{(1)}\bm{w})-Q_b^{(0)}\bm{w},Q_b^{(0)} \S(\bm{v}_b)- \bm{v}_b\rangle_{\partial T}\right|\\
&\leq&
\left( \sum_{T \in \T_h}h^{-2}\|\mbbQ_h^{(1)} \bm{w}- \bm{w}\|^2_T +
\|\nabla(\mbbQ_h^{(1)}\bm{w}- \bm{w})\|_T^2\right)^{\frac{1}{2}}
\left( h^{-1}\sum_{T \in \T_h}\|Q_b^{(0)}\S(\bm{v}_b)- \bm{v}_b\|_\pT^2\right)^{\frac{1}{2}}\\
&\leq& Ch \|\bm{w}\|_2 \3bar\bm{v}_b\3bar
\end{eqnarray*}
and from \eqref{eq:support.4}
\begin{eqnarray*}
&&\left|\sum_{e\in \E_h}h_e^\gamma\langle [\nabla_{w,\bm{\tau}} Q_b^{(0)}\bm{w}],[\nabla_{w,\bm{\tau}} \bm{v}_b]\rangle_e  \right|\\
&=& \left|\sum_{e\in \E_h}h_e^\gamma\langle [\mbbQ_h^{(0)}(\nabla \bm{w})\otimes \bm{\tau}],[\nabla_{w,\bm{\tau}}\bm{v}_b]\rangle_e  \right|\\
& \leq & \left(\sum_{e\in \E_h}h_e^\gamma\|[\mbbQ_h^{(0)}\nabla \bm{w}]\|_e^2 \right)^{\frac{1}{2}}
\left(\sum_{e\in \E_h}h_e^\gamma\|[\nabla_{w,\bm{\tau}} \bm{v}_b]\|_e^2 \right)^{\frac{1}{2}}\\
& \leq & Ch \|\bm{w}\|_2 \3bar\bm{v}_b\3bar.
\end{eqnarray*}
Combing the last three inequalities gives the estimate \eqref{H1.1}.
\medskip

Next, with $\bm{\sigma}=2\mu\varepsilon(\bm{w})+\rho I$, we have
\begin{equation}\label{EQ:Oct:005}
\begin{split}
&|\theta_{\bm{w},\rho}(\bm{v}_b)|\\
=&\left|\sum_{T\in\T_h}\langle \S(\bm{v}_b)- \bm{v}_b, (\mathbb{Q}_h^{(0)}\bm{\sigma}-\bm{\sigma})\bm{n}\rangle_{\partial T}\right|, \\
\leq&
\left|\sum_{T\in\T_h}\langle \S(\bm{v}_b)- Q_b^{(0)}\S(\bm{v}_b), (\mathbb{Q}_h^{(0)}\bm{\sigma}-\bm{\sigma})\bm{n}\rangle_{\partial T}
\right|\\
&+\left|
\sum_{T\in\T_h}\langle Q_b^{(0)}\S(\bm{v}_b) - \bm{v}_b, (\mathbb{Q}_h^{(0)}\bm{\sigma}-\bm{\sigma})\bm{n}\rangle_{\partial T}
\right|.
\end{split}
\end{equation}
From the trace inequality \eqref{trace_inq}, it is not hard to obtain the following estimate:
\begin{eqnarray}
&&\left|\sum_{T\in\T_h}\langle  Q_b^{(0)}(\S(\bm{v}_b))-\bm{v}_b, (\mathbb{Q}_h\bm{\sigma}-\bm{\sigma})\bm{n}\rangle_{\partial T}\right|
 \leq  Ch\|\bm{\sigma}\|_1 \3bar\bm{v}_b\3bar. \label{eq:Lw1}
\end{eqnarray}
As for the first term, we have
\begin{eqnarray}
&&\left|\sum_{T\in\T_h}\langle \S(\bm{v}_b)- Q_b^{(0)}(\S(\bm{v}_b)), (\mathbb{Q}_h\bm{\sigma}-\bm{\sigma})\bm{n}\rangle_{\partial T}\right|\label{eq:Lw2} \\
&= &
\left|\sum_{T\in\T_h} \langle (x-x_m)\partial_{\bm{\tau}}\S(\bm{v}_b), (Q_b^{(0)}\bm{\sigma}-\bm{\sigma})\bm{n}\rangle_{\partial T}\right|\nonumber\\
&= &
\left|\sum_{e\in\E_h} \langle (x-x_m)[\partial_{\bm{\tau}}\S(\bm{v}_b)], (Q_b^{(0)}\bm{\sigma}-\bm{\sigma})\bm{n}\rangle_{e}\right|\nonumber\\
&\leq&
C\sum_{e\in\E_h^0} h_e\|[\partial_{\bm{\tau}}\S(\bm{v}_b)]\|_e \|Q_b^{(0)}\bm{\sigma}-\bm{\sigma}\|_{e}
+ C\sum_{e\in(\E_h\cap \partial\Omega)} h_e \|\partial_{\bm{\tau}}\S(\bm{v}_b)\|_e \|\bm{\sigma}-Q_b^{(0)}\bm{\sigma}\|_e\nonumber\\
&\leq&
C h \left(\sum_{e\in\E_h^0} h_e\|[\partial_{\bm{\tau}}\S(\bm{v}_b)]\|_e^2\right)^{\frac12} \|\bm{\sigma}\|_{1}
 + Ch^s\left(\sum_{e\in(\E_h\cap \partial\Omega)} h_e^2 \|\partial_{\bm{\tau}}\S(\bm{v}_b)\|_e^2\right)^{\frac12} \|\bm{\sigma}\|_{s,\partial\Omega}\nonumber\\
&\leq & C(h\|\bm{\sigma}\|_1 + h^s\|\bm{\sigma}\|_{s,\partial\Omega}) \3bar\bm{v}_b\3bar. \nonumber
\end{eqnarray}
Substituting \eqref{eq:Lw1}-\eqref{eq:Lw2} into \eqref{EQ:Oct:005} yields the estimate \eqref{H1.3}. This completes the proof of the lemma.
\end{proof}

\section{Numerical experiments}\label{Section:NE}
The objective of this section is two-fold. First, we shall numerically verify the theoretical error estimates developed in previous sections by applying the algorithm to some testing problems. Secondly, we demonstrate the flexility and efficiency of the new numerical method via two benchmark problems, both in 2D and 3D. For simplicity, the $P_0$ finite element method developed in this paper shall be called ``SWG" method in this section. It is indeed a simplified WG when the tangential stability is absent from the scheme.

\subsection{Numerical experiments in 2D}
\subsubsection{Test Case 1: Dirichlet boundary condition}\label{Testcase2D_1}
The computational domain in the first test case is given by $\Omega=(0,1)\times (0,1)$, which is partitioned into different type of meshes of size $h$. The exact solution of the model problem is given by
\begin{equation*}
\begin{array}{lll}
\bm{u} =\left(
\begin{array}{lll}
x^2-y^2\\
x^2+y^2
\end{array}
\right),\\
\end{array}
\end{equation*}
The right-hand side function and the Dirichlet boundary data are chosen to match the exact solution.
The Young modulus and the Poisson ratio are taken as $E=1.$ and $\nu =0.45$.

The numerical deformation of the elastic material on different type of meshes are shown in Figure \ref{fig:testcase1_deformation_pressure}, where the color is provided by the magnitude of the numerical pseudo-pressure $p_h$.

\begin{figure}[!h]
             \centering
             \subfigure[Triangular mesh]{\label{Fig.sub1.4.l}
             \includegraphics [width=0.225\textwidth]{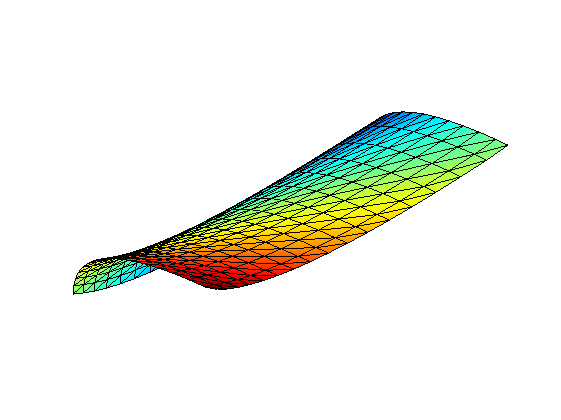}}
             \subfigure[Rectangular mesh]{\label{Fig.sub1.4.2}
             \includegraphics [width=0.225\textwidth]{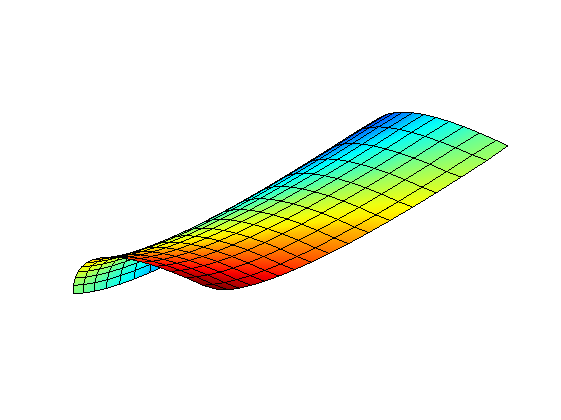}}
             \subfigure[Hexagonal mesh]{\label{Fig.sub1.4.3}
             \includegraphics [width=0.225\textwidth]{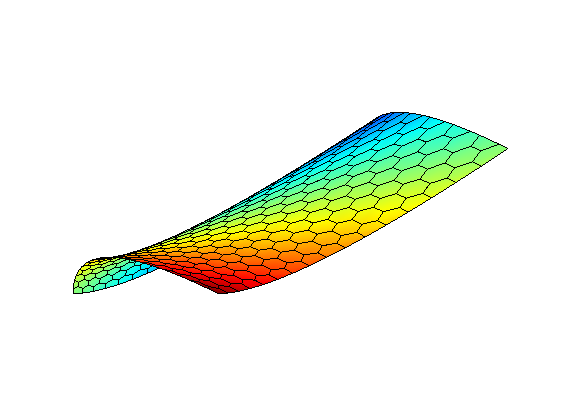}}
             \subfigure[Octagonal mesh]{\label{Fig.sub1.4.4}
             \includegraphics [width=0.225\textwidth]{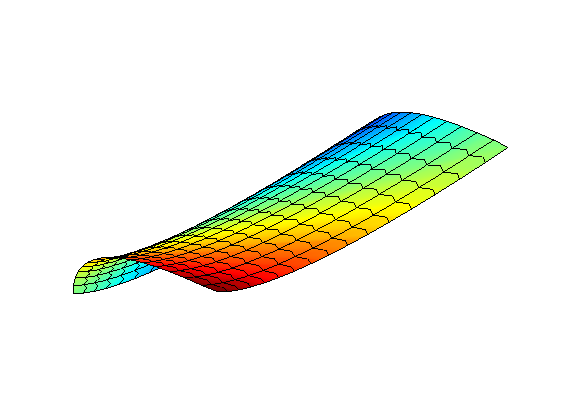}}

            \caption{Numerical results of test case 1 obtained with the SWG scheme \eqref{equ.elasticity-Mix-SWG}, deformation of the elastic material with color provided by the magnitude of the numerical pseudo-pressure $p_h$: (a), (b), (c), (d), using polygonal meshes with $h=1/16$.}
            \label{fig:testcase1_deformation_pressure}
\end{figure}

Tables \ref{table1.1} and \ref{table1.3} illustrate the numerical performance of the SWG scheme when the stability includes the jump of the tangential derivatives (i.e., $\kappa=1$). Table \ref{table1.1} does not involve the tangential derivative on the domain boundary, while Table \ref{table1.3} does with $\gamma=2$ for all boundary edge $e\in \partial\Omega$. The results indicate that the numerical displacement is converging at the optimal order of $r = 2$ in the $L^2$ norm and $r = 1$ in the $H^1$ norm, for all mesh types. For the numerical pressure, the error at the cell center is at the order of $r>1.5$, and for the numerical displacement in $H^1$ norm is at the order of $r = 1$. This implies that the pressure is of superconvergent to the exact solution at the cell centers.

Table \ref{table1.2} illustrates the numerical performance of the SWG scheme when the stability does not include the jump of the tangential derivatives (i.e., $\kappa=0$). We note that, in this case, the stability of the scheme is not known from the current theory. But it can be seen that, when $\kappa=0$, the scheme \eqref{equ.elasticity-Mix-SWG} does not converge for triangular meshes. However, for all other meshes (e.g., rectangular, hexagonal, and octagonal meshes) we tested, the numerical displacement is converging at the optimal order of $r = 2$ in the $L^2$ norm. Moreover, for the numerical pressure, the convergence at the cell center is at the order of $r>1.5$, and for the numerical displacement in $H^1$ norm, the convergence is also at the order of $r>1.5$. The numerical results indicate that the pressure and the numerical displacement in $H^1$ norm are of superconvergent to the exact solutions. We conjecture that the numerical method \eqref{equ.elasticity-Mix-SWG} is stable and convergent for all polygonal finite element partitions except the triangular one.
\begin{table}[!h]
\begin{center}
\caption{Error and convergence performance of the SWG scheme ($\kappa=1$) for the elasticity equation on polygonal meshes of test case 1, where $r$ refers to the order of convergence in $O(h^r).$ }\label{table1.1}
{\tiny
\begin{tabular}{||c|cc|cc|cc|cc|cc|cc||}
\hline
\multicolumn{7}{|>{\columncolor{mypink}}c|}{ Triangular elements }&\multicolumn{6}{|>{\columncolor{green!15}}c|}{ Rectangular elements }\\
\hline
$n$ & $\|e_{(u,v)} \|_{0}$ & $r=$ & $\|e_{(u,v)}\|_{1}$ & $r=$  &$\|e_{p} \|_{0}$& $r=$& $\|e_{(u,v)} \|_{0}$ & $r=$ & $\|e_{(u,v)}\|_{1}$ & $r=$  &$\|e_{p} \|_{0}$& $r=$ \\
\hline
 4  &  1.12e-01 &  -     &   1.27e+00 &  -    &    2.00e-01 &  -     &  1.25e-01 &  -     &   4.39e-01 &  -    &    1.95e-01 &  -    \\
 8  &  3.21e-02 &  1.81  &   6.95e-01 &  0.87 &    7.17e-02 &  1.48  &  3.23e-02 &  1.95  &   1.29e-01 &  1.77 &    6.09e-02 &  1.68\\
 16 &  8.47e-03 &  1.92  &   3.58e-01 &  0.95 &    2.29e-02 &  1.64  &  8.27e-03 &  1.97  &   3.65e-02 &  1.83 &    1.76e-02 &  1.79\\
 32 &  2.16e-03 &  1.97  &   1.81e-01 &  0.99 &    7.72e-03 &  1.57  &  2.10e-03 &  1.98  &   1.01e-02 &  1.85 &    4.91e-03 &  1.84\\
\hline
\multicolumn{7}{|>{\columncolor{yellow!20}}c|}{ Hexagonal elements }&\multicolumn{6}{|>{\columncolor{blue!20}}c|}{ Polygonal elements}\\
\hline
$n$ & $\|e_{(u,v)} \|_{0}$ & $r=$ & $\|e_{(u,v)}\|_{1}$ & $r=$  &$\|e_{p} \|_{0}$& $r=$& $\|e_{(u,v)} \|_{0}$ & $r=$ & $\|e_{(u,v)}\|_{1}$ & $r=$  &$\|e_{p} \|_{0}$& $r=$ \\
\hline
 4  &  9.30e-02  & -     &   2.41e-01 &  -     &   5.63e-01 &  -     &  1.80e-01  & -    &    3.96e-01 &  -     &   6.12e-01 &  -    \\
 8  &  2.79e-02  & 1.74  &   9.93e-02 &  1.28  &   2.06e-01 &  1.45  &  4.86e-02  & 1.89 &    1.19e-01 &  1.73  &   2.14e-01 &  1.51\\
 16 &  8.81e-03  & 1.66  &   4.31e-02 &  1.20  &   7.10e-02 &  1.54  &  1.27e-02  & 1.94 &    3.52e-02 &  1.76  &   7.27e-02 &  1.56\\
 32 &  2.80e-03  & 1.65  &   1.86e-02 &  1.21  &   2.38e-02 &  1.57  &  3.26e-03  & 1.96 &    1.06e-02 &  1.74  &   2.48e-02 &  1.55\\
\hline
\end{tabular}}
\end{center}
\end{table}

\begin{table}[!h]
\begin{center}
\caption{Error and convergence performance of the SWG scheme ($\kappa=1$) with the boundary stability term for the elasticity equation on polygonal meshes of test case 1, where $r$ refers to the order of convergence in $O(h^r).$ }\label{table1.3}
{\tiny
\begin{tabular}{||c|cc|cc|cc|cc|cc|cc||}
\hline
\multicolumn{7}{|>{\columncolor{mypink}}c|}{ Triangular elements }&\multicolumn{6}{|>{\columncolor{green!15}}c|}{ Rectangular elements }\\
\hline
$n$ & $\|e_{(u,v)} \|_{0}$ & $r=$ & $\|e_{(u,v)}\|_{1}$ & $r=$  &$\|e_{p} \|_{0}$& $r=$& $\|e_{(u,v)} \|_{0}$ & $r=$ & $\|e_{(u,v)}\|_{1}$ & $r=$  &$\|e_{p} \|_{0}$& $r=$ \\
\hline
 4  &  1.14e-01  & -     &   1.28e+00 &  -     &   2.26e-01 &  -     &  1.10e-01 &  -    &    3.73e-01 &  -     &   3.71e-01 &  -   \\
 8  &  3.25e-02  & 1.81  &   6.97e-01 &  0.87  &   7.39e-02 &  1.61  &  3.12e-02 &  1.82 &    1.19e-01 &  1.65  &   8.79e-02 &  2.08\\
 16 &  8.52e-03  & 1.93  &   3.59e-01 &  0.96  &   2.32e-02 &  1.67  &  8.22e-03 &  1.92 &    3.49e-02 &  1.77  &   2.14e-02 &  2.04\\
 32 &  2.16e-03  & 1.98  &   1.81e-01 &  0.99  &   7.84e-03 &  1.57  &  2.10e-03 &  1.97 &    9.88e-03 &  1.82  &   5.44e-03 &  1.98\\
\hline
\multicolumn{7}{|>{\columncolor{yellow!20}}c|}{ Hexagonal elements }&\multicolumn{6}{|>{\columncolor{blue!20}}c|}{ Polygonal elements}\\
\hline
$n$ & $\|e_{(u,v)} \|_{0}$ & $r=$ & $\|e_{(u,v)}\|_{1}$ & $r=$  &$\|e_{p} \|_{0}$& $r=$& $\|e_{(u,v)} \|_{0}$ & $r=$ & $\|e_{(u,v)}\|_{1}$ & $r=$  &$\|e_{p} \|_{0}$& $r=$ \\
\hline
 4  &  9.86e-02 &  -     &   2.66e-01 &  -     &   6.62e-01 &  -     &  1.72e-01  & -     &   3.68e-01 &  -    &    7.87e-01 &  -   \\
 8  &  2.91e-02 &  1.76  &   1.12e-01 &  1.24  &   2.27e-01 &  1.54  &  4.81e-02  & 1.83  &   1.14e-01 &  1.69 &    2.48e-01 &  1.67\\
 16 &  9.10e-03 &  1.68  &   4.70e-02 &  1.26  &   7.48e-02 &  1.60  &  1.27e-02  & 1.92  &   3.50e-02 &  1.71 &    7.81e-02 &  1.67\\
 32 &  2.86e-03 &  1.67  &   1.97e-02 &  1.25  &   2.45e-02 &  1.61  &  3.27e-03  & 1.96  &   1.08e-02 &  1.70 &    2.56e-02 &  1.61\\
\hline
\end{tabular}}
\end{center}
\end{table}

\begin{table}[!h]
\begin{center}
\caption{Error and convergence performance of the SWG scheme ($\kappa=0$ or without the tangential stability) for the elasticity equation on polygonal meshes of test case 1, where $r$ refers to the order of convergence in $O(h^r).$ }\label{table1.2}
{\tiny
\begin{tabular}{||c|cc|cc|cc|cc|cc|cc||}
\hline
\multicolumn{7}{|>{\columncolor{mypink}}c|}{ Triangular elements }&\multicolumn{6}{|>{\columncolor{green!15}}c|}{ Rectangular elements }\\
\hline
$n$ & $\|e_{(u,v)} \|_{0}$ & $r=$ & $\|e_{(u,v)}\|_{1}$ & $r=$  &$\|e_{p} \|_{0}$& $r=$& $\|e_{(u,v)} \|_{0}$ & $r=$ & $\|e_{(u,v)}\|_{1}$ & $r=$  &$\|e_{p} \|_{0}$& $r=$ \\
\hline
4  &   2.04e-01 &  -     &   2.53e+00 &  -     &   2.38e-01  & -    & 1.33e-01 &  -     &   4.64e-01 &  -      &  1.74e-01 &  -   \\
8  &   1.95e-01 &  0.07  &   4.73e+00 & -0.90  &   2.20e-01  & 0.12 & 3.40e-02 &  1.97  &   1.36e-01 &  1.78   &  5.56e-02 &  1.65\\
16 &   1.95e-01 & -0.00  &   9.42e+00 & -0.99  &   2.18e-01  & 0.01 & 8.52e-03 &  1.99  &   3.82e-02 &  1.83   &  1.64e-02 &  1.76\\
32 &   1.96e-01 & -0.00  &   1.88e+01 & -1.00  &   2.17e-01  & 0.00 & 2.13e-03 &  2.00  &   1.05e-02 &  1.86   &  4.64e-03 &  1.82\\
\hline
\multicolumn{7}{|>{\columncolor{yellow!20}}c|}{ Hexagonal elements }&\multicolumn{6}{|>{\columncolor{blue!20}}c|}{ Polygonal elements}\\
\hline
$n$ & $\|e_{(u,v)} \|_{0}$ & $r=$ & $\|e_{(u,v)}\|_{1}$ & $r=$  &$\|e_{p} \|_{0}$& $r=$& $\|e_{(u,v)} \|_{0}$ & $r=$ & $\|e_{(u,v)}\|_{1}$ & $r=$  &$\|e_{p} \|_{0}$& $r=$ \\
\hline
4  &   1.20e-01 &  -     &   3.97e-01 &  -      &  1.52e-01  & -    &  2.23e-01 &  -      &  4.89e-01&   -      &  2.53e-01 &  -   \\
8  &   3.16e-02 &  1.92  &   1.43e-01 &  1.48   &  5.38e-02  & 1.50 &  6.11e-02 &  1.87   &  1.53e-01&   1.68   &  8.13e-02 &  1.64\\
16 &   8.57e-03 &  1.88  &   4.77e-02 &  1.58   &  1.47e-02  & 1.87 &  1.59e-02 &  1.95   &  4.59e-02&   1.74   &  2.44e-02 &  1.74\\
32 &   2.23e-03 &  1.94  &   1.59e-02 &  1.59   &  4.16e-03  & 1.82 &  4.03e-03 &  1.98   &  1.38e-02&   1.74   &  7.09e-03 &  1.78\\
\hline
\end{tabular}}
\end{center}
\end{table}

\subsubsection{Test Case 2: locking-free tests}
The computational domain in test case 2 is given by $\Omega=(0,1)\times (0,1)$, which is partitioned into different type of meshes of size $h$. The exact solution of the model problem is given by
\begin{equation*}
\begin{array}{lll}
\bm{u} =\left(
\begin{array}{lll}
\sin(x)\sin(y)\\
\cos(x)\cos(y)
\end{array}
\right)
+ \lambda^{-1}
\left(
\begin{array}{lll}
x\\
y
\end{array}
\right) ,\\
\end{array}
\end{equation*}
The right-hand side function and the Dirichlet boundary data are chosen to match the exact solution. Note that this example has been considered in \cite{WangWang_2016}. In this test case, the Young modulus is taken as $E=1$. Two values of the Poisson ratio $\nu =0.45$ and $\nu=0.4999999$ are considered for this test case.

The numerical deformation of the elastic material on different type of meshes are shown in Figure \ref{fig:testcase2_deformation_pressure}, and the material is colored by the numerical pseudo-pressure $p_h$.

\begin{figure}[!h]
             \centering
             \subfigure[Triangular mesh]{\label{Fig.sub2.4.l}
             \includegraphics [width=0.225\textwidth]{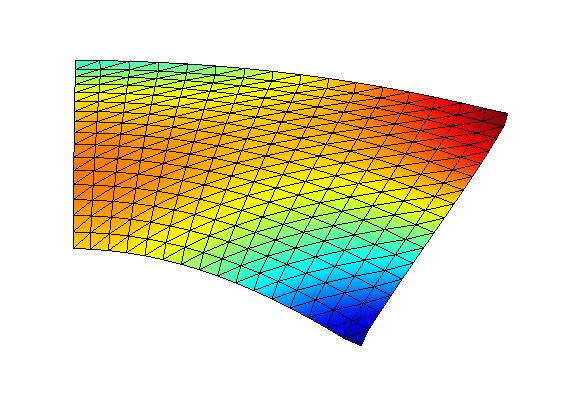}}
             \subfigure[Rectangular mesh]{\label{Fig.sub2.4.2}
             \includegraphics [width=0.225\textwidth]{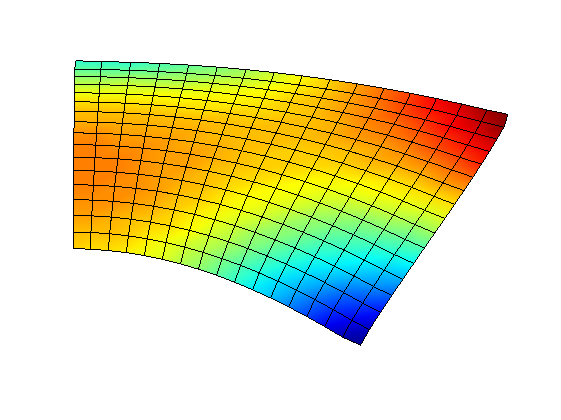}}
             \subfigure[Hexagonal mesh]{\label{Fig.sub2.4.3}
             \includegraphics [width=0.225\textwidth]{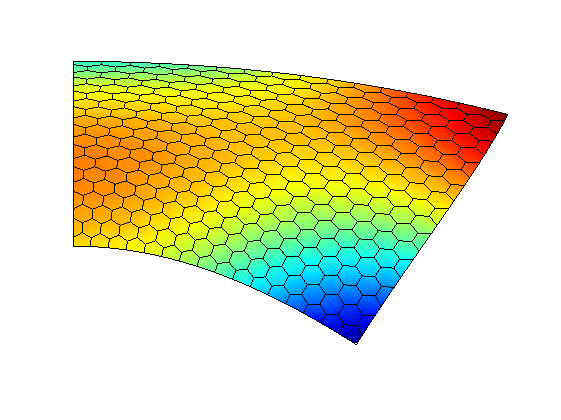}}
             \subfigure[Octagonal mesh]{\label{Fig.sub2.4.4}
             \includegraphics [width=0.225\textwidth]{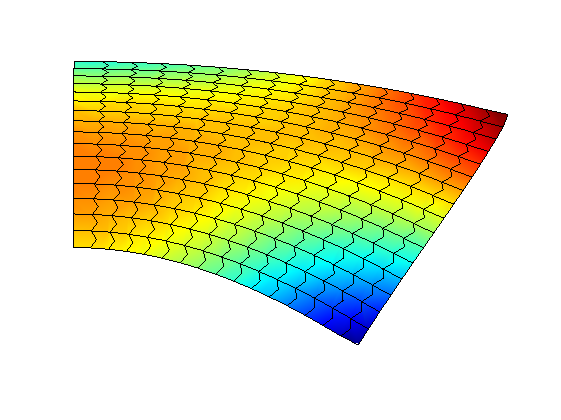}}

            \caption{Numerical results for test case 2 obtained with the SWG scheme \eqref{equ.elasticity-Mix-SWG}, deformation of the elastic material with color provided by the magnitude of the numerical pseudo-pressure $p_h$: (a), (b), (c), (d), using polygonal meshes with $h=1/16$.}
            \label{fig:testcase2_deformation_pressure}
\end{figure}

Tables \ref{table2.1} and \ref{table2.3} illustrate the numerical performance of the SWG scheme when the stability includes the tangential derivatives (i.e., $\kappa =1.0$). Table \ref{table2.3} is concerned with the tangential stability with boundary included, while Table \ref{table2.1} does not consider the boundary stability. It can be observed that the numerical displacement is converging at the order of $r = 2$ in the $L^2$ norm for all partitions except the hexagonal one for which the convergence is of order $r \approx 1.5$. For the numerical pressure, the error at the cell center is at the order of $r>1.5$, and for the numerical displacement in $H^1$ norm is at the order of $r=1$. The experiments suggest that the pressure is of superconvergent to the exact solution at the cell centers. This superconvergence phenomena has been observed for all mesh types.
The results for the Poisson ratio of $\nu=0.4999999 \ (\lambda\rightarrow \infty)$, as shown in the second part of Tables \ref{table2.1} and \ref{table2.3}, clearly indicate a locking-free convergence for the SWG method in various norms, which is consistent with our theory.

Table \ref{table2.2} illustrates the numerical performance of the SWG scheme when no tangential stability is employed (i.e., $\kappa=0$). It can be seen that the numerical scheme does not converge for the triangular element when $\kappa=0$. However, for all other polygonal elements we tested, the numerical displacement converges at the optimal order of $r = 2$ in the $L^2$ norm. For the numerical pressure, the error at the cell center is at the order of $r>1.5$, and for the numerical displacement in $H^1$ norm, the convergence is at the order of $r>1.5$. These results are similar to that for the test case 1.

\begin{table}[!h]
\begin{center}
\caption{Error and convergence performance of the SWG scheme ($\kappa=1.0$ or with interior tangential stability) for the elasticity equation on polygonal meshes of test case 2. $r$ refers to the order of convergence in $O(h^r).$ }\label{table2.1}
{\tiny
\begin{tabular}{||c|cc|cc|cc|cc|cc|cc||}
\hline
\multicolumn{13}{|c|}{$\nu \approx 0.45$}\\
\hline
\multicolumn{7}{|>{\columncolor{mypink}}c|}{ Triangular elements }&\multicolumn{6}{|>{\columncolor{green!15}}c|}{ Rectangular elements }\\
\hline
$n$ & $\|e_{(u,v)} \|_{0}$ & $r=$ & $\|e_{(u,v)}\|_{1}$ & $r=$  &$\|e_{p} \|_{0}$& $r=$& $\|e_{(u,v)} \|_{0}$ & $r=$ & $\|e_{(u,v)}\|_{1}$ & $r=$  &$\|e_{p} \|_{0}$& $r=$ \\
\hline
 4  &  8.84e-03 &  -     &   7.46e-02 &  -     &   1.85e-02 &  -    & 9.18e-03 &  -     &   3.17e-02  & -     &   2.41e-02  & -   \\
 8  &  2.37e-03 &  1.90  &   3.97e-02 &  0.91  &   6.40e-03 &  1.53 & 2.15e-03 &  2.10  &   9.36e-03  & 1.76  &   7.98e-03  & 1.59\\
 16 &  5.93e-04 &  2.00  &   2.00e-02 &  0.99  &   2.13e-03 &  1.59 & 5.12e-04 &  2.07  &   2.55e-03  & 1.87  &   2.03e-03  & 1.98\\
 32 &  1.46e-04 &  2.02  &   9.99e-03 &  1.00  &   7.68e-04 &  1.47 & 1.24e-04 &  2.05  &   6.60e-04  & 1.95  &  4.95e-04   & 2.03 \\
\hline
\multicolumn{7}{|>{\columncolor{yellow!20}}c|}{ Hexagonal elements }&\multicolumn{6}{|>{\columncolor{blue!20}}c|}{ Polygonal elements}\\
\hline
$n$ & $\|e_{(u,v)} \|_{0}$ & $r=$ & $\|e_{(u,v)}\|_{1}$ & $r=$  &$\|e_{p} \|_{0}$& $r=$& $\|e_{(u,v)} \|_{0}$ & $r=$ & $\|e_{(u,v)}\|_{1}$ & $r=$  &$\|e_{p} \|_{0}$& $r=$ \\
\hline
 4  &  1.48e-02 &  -     &   5.63e-02 &  -     &   5.64e-02 &  -    &    1.93e-02  & -     &   4.74e-02 &  -     &   5.19e-02 &  -    \\
 8  &  4.20e-03 &  1.82  &   2.36e-02 &  1.25  &   2.58e-02 &  1.13 &    5.69e-03  & 1.76  &   1.94e-02 &  1.29  &   2.45e-02 &  1.08\\
 16 &  1.39e-03 &  1.60  &   8.83e-03 &  1.42  &   9.33e-03 &  1.47 &    1.57e-03  & 1.86  &   6.74e-03 &  1.53  &   9.09e-03 &  1.43\\
 32 &  4.45e-04 &  1.64  &   3.37e-03 &  1.39  &   3.22e-03 &  1.54 &    3.94e-04  & 2.00  &   2.21e-03 &  1.61  &   3.13e-03 &  1.54\\
\hline
\hline
\multicolumn{13}{|c|}{$\nu = 0.4999999$ }\\
\hline
\multicolumn{7}{|>{\columncolor{mypink}}c|}{ Triangular elements }&\multicolumn{6}{|>{\columncolor{green!15}}c|}{ Rectangular elements }\\
\hline
$n$ & $\|e_{(u,v)} \|_{0}$ & $r=$ & $\|e_{(u,v)}\|_{1}$ & $r=$  &$\|e_{p} \|_{0}$& $r=$& $\|e_{(u,v)} \|_{0}$ & $r=$ & $\|e_{(u,v)}\|_{1}$ & $r=$  &$\|e_{p} \|_{0}$& $r=$ \\
\hline
4  &   8.78e-03 &   -    &   7.43e-02 &   -     &  2.39e-02 &  -    & 9.13e-03 &  -     &   3.38e-02 &  -     &   3.49e-02 &  -   \\
8  &   2.43e-03 &  1.85  &   3.95e-02 &  0.91   &  8.50e-03 &  1.49 & 2.12e-03 &  2.11  &   1.05e-02 &  1.69  &   1.20e-02 &  1.54\\
16 &   6.19e-04 &  1.97  &   2.00e-02 &  0.98   &  2.75e-03 &  1.63 & 5.08e-04 &  2.06  &   2.93e-03 &  1.84  &   3.05e-03 &  1.98\\
32 &   1.53e-04 &  2.01  &   9.95e-03 &  1.00   &  9.46e-04 &  1.54 & 1.23e-04 &  2.05  &   7.74e-04 &  1.92  &   7.43e-04 &  2.04\\
\hline
\multicolumn{7}{|>{\columncolor{yellow!20}}c|}{ Hexagonal elements }&\multicolumn{6}{|>{\columncolor{blue!20}}c|}{ Polygonal elements}\\
\hline
$n$ & $\|e_{(u,v)} \|_{0}$ & $r=$ & $\|e_{(u,v)}\|_{1}$ & $r=$  &$\|e_{p} \|_{0}$& $r=$& $\|e_{(u,v)} \|_{0}$ & $r=$ & $\|e_{(u,v)}\|_{1}$ & $r=$  &$\|e_{p} \|_{0}$& $r=$ \\
\hline
4  &   1.33e-02  &  -    &   5.43e-02 &   -    &   9.57e-02 &   -   &   1.77e-02 &  -  &    4.45e-02&   -   &   8.82e-02 &  - \\
8  &   3.45e-03  & 1.95  &   2.25e-02 &  1.27  &   4.28e-02 &  1.16 &   4.66e-03 &  1.93 &    1.79e-02&   1.31  &   4.06e-02 &  1.12\\
16 &   1.19e-03  & 1.53  &   8.41e-03 &  1.42  &   1.47e-02 &  1.54 &   1.14e-03 &  2.03 &    5.79e-03&   1.63  &   1.43e-02 &  1.51\\
32 &   4.15e-04  & 1.52  &   3.31e-03 &  1.35  &   4.92e-03 &  1.58 &   2.60e-04 &  2.14 &    1.83e-03&   1.66  &   4.70e-03 &  1.60\\
\hline
\end{tabular}}
\end{center}
\end{table}

\begin{table}[!h]
\begin{center}
\caption{Error and convergence performance of the SWG scheme ($\kappa=1.0$ and full tangential stability including the boundary terms) for the elasticity equation on polygonal meshes of test case 2. $r$ refers to the order of convergence in $O(h^r).$ }\label{table2.3}
{\tiny
\begin{tabular}{||c|cc|cc|cc|cc|cc|cc||}
\hline
\multicolumn{13}{|c|}{$\nu = 0.4999999$ }\\
\hline
\multicolumn{7}{|>{\columncolor{mypink}}c|}{ Triangular elements }&\multicolumn{6}{|>{\columncolor{green!15}}c|}{ Rectangular elements }\\
\hline
$n$ & $\|e_{(u,v)} \|_{0}$ & $r=$ & $\|e_{(u,v)}\|_{1}$ & $r=$  &$\|e_{p} \|_{0}$& $r=$& $\|e_{(u,v)} \|_{0}$ & $r=$ & $\|e_{(u,v)}\|_{1}$ & $r=$  &$\|e_{p} \|_{0}$& $r=$ \\
\hline
4  &   1.01e-02 &  -    &    1.03e-01 &  -    &    3.66e-02 &  -    & 1.04e-02 &  -     &   3.86e-02 &  -     &   5.85e-02 &  -     \\
8  &   2.82e-03 &  1.84 &    5.00e-02 &  1.04 &    1.35e-02 &  1.44 & 2.45e-03 &  2.08  &   1.23e-02 &  1.65  &   1.73e-02 &  1.76\\
16 &   7.00e-04 &  2.01 &    2.34e-02 &  1.10 &    4.42e-03 &  1.61 & 5.60e-04 &  2.13  &   3.36e-03 &  1.87  &   3.96e-03 &  2.13\\
32 &   1.66e-04 &  2.08 &    1.09e-02 &  1.10 &    1.50e-03 &  1.56 & 1.30e-04 &  2.11  &   8.56e-04 &  1.97  &   8.81e-04 &  2.17\\
\hline
\multicolumn{7}{|>{\columncolor{yellow!20}}c|}{ Hexagonal elements }&\multicolumn{6}{|>{\columncolor{blue!20}}c|}{ Polygonal elements}\\
\hline
$n$ & $\|e_{(u,v)} \|_{0}$ & $r=$ & $\|e_{(u,v)}\|_{1}$ & $r=$  &$\|e_{p} \|_{0}$& $r=$& $\|e_{(u,v)} \|_{0}$ & $r=$ & $\|e_{(u,v)}\|_{1}$ & $r=$  &$\|e_{p} \|_{0}$& $r=$ \\
\hline
4  &   1.65e-02 &  -     &   6.72e-02  & -     &   1.16e-01 &  -    &   2.05e-02 &  -     &   4.97e-02  & -     &   1.05e-01 &  -    \\
8  &   3.96e-03 &  2.06  &   2.77e-02  & 1.28  &   4.66e-02 &  1.32 &   5.05e-03 &  2.02  &   2.00e-02  & 1.32  &   4.64e-02 &  1.18\\
16 &   1.23e-03 &  2.03  &   6.86e-03  & 1.54  &   1.54e-02 &  1.59 &   1.23e-03 &  2.03  &   6.86e-03  & 1.54  &   1.54e-02 &  1.59\\
32 &   4.31e-04 &  1.57  &   4.02e-03  & 1.39  &   5.06e-03 &  1.61 &   2.78e-04 &  2.15  &   2.39e-03  & 1.52  &   4.90e-03 &  1.66\\
\hline
\end{tabular}}
\end{center}
\end{table}

\begin{table}[!h]
\begin{center}
\caption{Error and convergence performance of the SWG scheme ($\kappa=0.0$ or no tangential stability) for the elasticity equation on polygonal meshes of test case 2. $r$ refers to the order of convergence in $O(h^r).$ }\label{table2.2}
{\tiny
\begin{tabular}{||c|cc|cc|cc|cc|cc|cc||}
\hline
\multicolumn{13}{|c|}{$\nu \approx 0.45$ }\\
\hline
\multicolumn{7}{|>{\columncolor{mypink}}c|}{ Triangular elements }&\multicolumn{6}{|>{\columncolor{green!15}}c|}{ Rectangular elements }\\
\hline
$n$ & $\|e_{(u,v)} \|_{0}$ & $r=$ & $\|e_{(u,v)}\|_{1}$ & $r=$  &$\|e_{p} \|_{0}$& $r=$& $\|e_{(u,v)} \|_{0}$ & $r=$ & $\|e_{(u,v)}\|_{1}$ & $r=$  &$\|e_{p} \|_{0}$& $r=$ \\
\hline
4   &  4.64e-02 &  -     &   5.99e-01 &  -     &   5.26e-02 &  -    & 7.11e-03  & -     &   2.56e-02 &  -     &   1.08e-02 &  -   \\
8   &  4.82e-02 & -0.06  &   1.19e+00 & -0.99  &   5.00e-02 &  0.07 & 1.76e-03  & 2.02  &   7.51e-03 &  1.77  &   3.39e-03 &  1.67\\
16  &  4.88e-02 & -0.02  &   2.38e+00 & -1.00  &   4.93e-02 &  0.02 & 4.35e-04  & 2.01  &   2.12e-03 &  1.83  &   9.92e-04 &  1.77\\
32  &  4.89e-02 & -0.00  &   4.75e+00 & -1.00  &   4.91e-02 &  0.01 & 1.08e-04  & 2.01  &   5.83e-04 &  1.86  &   2.79e-04 &  1.83\\
\hline
\multicolumn{7}{|>{\columncolor{yellow!20}}c|}{ Hexagonal elements }&\multicolumn{6}{|>{\columncolor{blue!20}}c|}{ Polygonal elements}\\
\hline
$n$ & $\|e_{(u,v)} \|_{0}$ & $r=$ & $\|e_{(u,v)}\|_{1}$ & $r=$  &$\|e_{p} \|_{0}$& $r=$& $\|e_{(u,v)} \|_{0}$ & $r=$ & $\|e_{(u,v)}\|_{1}$ & $r=$  &$\|e_{p} \|_{0}$& $r=$ \\
\hline
4  &   7.48e-03 &  -     &   2.04e-02 &  -     &   9.97e-03 &  -    &  1.18e-02 &  -     &   2.67e-02 &  -     &   1.25e-02 &  -   \\
8  &   1.97e-03 &  1.92  &   7.35e-03 &  1.47  &   3.53e-03 &  1.50 &  2.90e-03 &  2.02  &   7.69e-03 &  1.79  &   3.75e-03 &  1.74\\
16 &   5.12e-04 &  1.95  &   2.40e-03 &  1.62  &   9.31e-04 &  1.92 &  7.14e-04 &  2.02  &   2.15e-03 &  1.83  &   1.07e-03 &  1.81\\
32 &   1.30e-04 &  1.98  &   8.18e-04 &  1.55  &   2.68e-04 &  1.79 &  1.76e-04 &  2.02  &   6.01e-04 &  1.84  &   2.97e-04 &  1.84\\
\hline
\hline
\multicolumn{13}{|c|}{$\nu = 0.4999999$ }\\
\hline
\multicolumn{7}{|>{\columncolor{mypink}}c|}{ Triangular elements }&\multicolumn{6}{|>{\columncolor{green!15}}c|}{ Rectangular elements }\\
\hline
$n$ & $\|e_{(u,v)} \|_{0}$ & $r=$ & $\|e_{(u,v)}\|_{1}$ & $r=$  &$\|e_{p} \|_{0}$& $r=$& $\|e_{(u,v)} \|_{0}$ & $r=$ & $\|e_{(u,v)}\|_{1}$ & $r=$  &$\|e_{p} \|_{0}$& $r=$ \\
\hline
4  &   4.65e-02 &  -     &   5.74e-01 &  -      &  6.13e-02 &  -    & 7.22e-03 &  -    &    2.77e-02 &  -     &   1.41e-02 &  -      \\
8  &   4.88e-02 & -0.07  &   1.14e+00 & -0.99   &  5.89e-02 &  0.06 & 1.81e-03 &  2.00 &    8.41e-03 &  1.72  &   4.68e-03 &  1.59\\
16 &   4.95e-02 & -0.02  &   2.29e+00 & -1.00   &  5.82e-02 &  0.02 & 4.50e-04 &  2.00 &    2.43e-03 &  1.79  &   1.41e-03 &  1.73\\
32 &   4.97e-02 & -0.00  &   4.57e+00 & -1.00   &  5.80e-02 &  0.00 & 1.12e-04 &  2.00 &    6.83e-04 &  1.83  &   4.07e-04 &  1.80\\
\hline
\multicolumn{7}{|>{\columncolor{yellow!20}}c|}{ Hexagonal elements }&\multicolumn{6}{|>{\columncolor{blue!20}}c|}{ Polygonal elements}\\
\hline
$n$ & $\|e_{(u,v)} \|_{0}$ & $r=$ & $\|e_{(u,v)}\|_{1}$ & $r=$  &$\|e_{p} \|_{0}$& $r=$& $\|e_{(u,v)} \|_{0}$ & $r=$ & $\|e_{(u,v)}\|_{1}$ & $r=$  &$\|e_{p} \|_{0}$& $r=$ \\
\hline
4  &   8.04e-03  & -     &   2.32e-02 &  -     &   1.38e-02 &  -    & 1.25e-02 &  -     &   2.87e-02 &  -     &   1.72e-02 &  -    \\
8  &   2.16e-03  & 1.90  &   8.29e-03 &  1.49  &   4.99e-03 &  1.46 & 3.13e-03 &  2.00  &   8.57e-03 &  1.75  &   5.31e-03 &  1.69\\
16 &   5.65e-04  & 1.93  &   2.66e-03 &  1.64  &   1.33e-03 &  1.91 & 7.74e-04 &  2.02  &   2.45e-03 &  1.81  &   1.54e-03 &  1.79\\
32 &   1.45e-04  & 1.96  &   8.90e-04 &  1.58  &   3.83e-04 &  1.79 & 1.92e-04 &  2.01  &   6.90e-04 &  1.83  &   4.33e-04 &  1.83\\
\hline
\end{tabular}}
\end{center}
\end{table}

\subsubsection{Test Case 3: mixed Dirichlet/Neumann boundary conditions}
The computational domain in this test case is given by $\Omega=(0,1)\times (0,1)$, which is partitioned into different type of meshes of size $h$. The exact solution of the model problem is given by
\begin{equation*}
\begin{array}{lll}
\bm{u} =\left(
\begin{array}{lll}
\sin(x)\sin(y)\\
\cos(x)\cos(y)
\end{array}
\right),
\end{array}
\end{equation*}
\begin{figure}[!h]
\begin{center}
\includegraphics [width=0.35\textwidth]{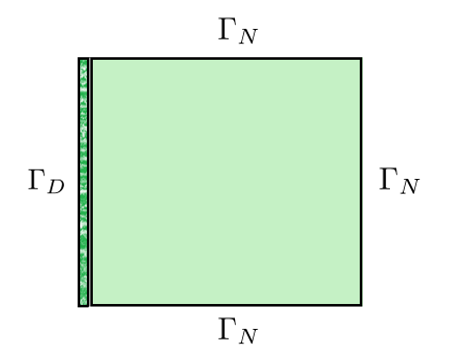}
\end{center}
\caption{Mixed Boundary conditions for test case 3.}
\label{fig.mixed_bounary}
\end{figure}
The right-hand side function is chosen to match the exact solution.
The Dirichlet boundary condition is imposed on the boundary $\Gamma_D=\{x=0,\; 0\leq y \leq 1\}$ and the Neumann boundary condition is imposed on $\Gamma_N=\Gamma/\Gamma_D$.

\begin{table}[!h]
\begin{center}
\caption{Error and convergence performance of the SWG scheme ($\kappa=1$, with tangential stability only on interior edges) for the Elasticity equation on polygonal meshes of test case 3. $r$ refers to the order of convergence in $O(h^r).$ }\label{table3.1}
{\tiny
\begin{tabular}{||c|cc|cc|cc|cc|cc|cc||}
\hline
\multicolumn{7}{|>{\columncolor{mypink}}c|}{ Triangular elements }&\multicolumn{6}{|>{\columncolor{green!15}}c|}{ Rectangular elements }\\
\hline
$n$ & $\|e_{(u,v)} \|_{0}$ & $r=$ & $\|e_{(u,v)}\|_{1}$ & $r=$  &$\|e_{p} \|_{0}$& $r=$& $\|e_{(u,v)} \|_{0}$ & $r=$ & $\|e_{(u,v)}\|_{1}$ & $r=$  &$\|e_{p} \|_{0}$& $r=$ \\
\hline
 4  &  1.23e-01 &  -     &   3.24e-01 &  -      &  1.02e-02 &  -     &  3.34e-01  & -    &    6.18e-01 &  -     &   1.93e-02 &  -   \\
 8  &  4.32e-02 &  1.51  &   1.23e-01 &  1.40   &  3.72e-03 &  1.46  &  1.82e-01  & 0.87 &    3.70e-01 &  0.74  &   1.16e-02 &  0.74\\
 16 &  1.21e-02 &  1.83  &   3.99e-02 &  1.62   &  1.12e-03 &  1.73  &  6.74e-02  & 1.44 &    1.47e-01 &  1.33  &   4.53e-03 &  1.36\\
 32 &  3.14e-03 &  1.95  &   1.38e-02 &  1.53   &  3.25e-04 &  1.79  &  1.94e-02  & 1.80 &    4.46e-02 &  1.72  &   1.37e-03 &  1.73\\
 64 &  7.95e-04 &  1.98  &   5.66e-03 &  1.29   &  1.01e-04 &  1.68  &  5.06e-03  & 1.94 &    1.22e-02 &  1.87  &   3.71e-04 &  1.88\\
\hline
\multicolumn{7}{|>{\columncolor{yellow!20}}c|}{ Hexagonal elements }&\multicolumn{6}{|>{\columncolor{blue!20}}c|}{ Polygonal elements}\\
\hline
$n$ & $\|e_{(u,v)} \|_{0}$ & $r=$ & $\|e_{(u,v)}\|_{1}$ & $r=$  &$\|e_{p} \|_{0}$& $r=$& $\|e_{(u,v)} \|_{0}$ & $r=$ & $\|e_{(u,v)}\|_{1}$ & $r=$  &$\|e_{p} \|_{0}$& $r=$ \\
\hline
 4  &  4.37e-01  & -      &  6.38e-01  & -     &   2.07e-02 &  -     & 5.96e-01 &  -     &   7.56e-01  & -     &   2.22e-02  & -   \\
 8  &  2.43e-01  & 0.85   &  3.83e-01  & 0.74  &   1.28e-02 &  0.69  & 3.62e-01 &  0.72  &   5.05e-01  & 0.58  &   1.52e-02  & 0.55\\
 16 &  9.22e-02  & 1.40   &  1.58e-01  & 1.28  &   5.03e-03 &  1.35  & 1.52e-01 &  1.25  &   2.28e-01  & 1.15  &   6.84e-03  & 1.15\\
 32 &  2.71e-02  & 1.77   &  4.95e-02  & 1.67  &   1.54e-03 &  1.71  & 4.68e-02 &  1.70  &   7.43e-02  & 1.62  &   2.24e-03  & 1.61\\
 64 &  7.09e-03  & 1.93   &  1.36e-02  & 1.86  &   4.27e-04 &  1.85  & 1.25e-02 &  1.91  &   2.08e-02  & 1.84  &   6.31e-04  & 1.82\\
\hline
\end{tabular}}
\end{center}
\end{table}

\begin{table}[!h]
\begin{center}
\caption{Error and convergence performance of the SWG scheme ($\kappa=1$, with tangential stability on all edges including the domain boundary) for the elasticity equation on polygonal meshes of test case 3. $r$ refers to the order of convergence in $O(h^r).$ }\label{table3.3}
{\tiny
\begin{tabular}{||c|cc|cc|cc|cc|cc|cc||}
\hline
\multicolumn{7}{|>{\columncolor{mypink}}c|}{ Triangular elements }&\multicolumn{6}{|>{\columncolor{green!15}}c|}{ Rectangular elements }\\
\hline
$n$ & $\|e_{(u,v)} \|_{0}$ & $r=$ & $\|e_{(u,v)}\|_{1}$ & $r=$  &$\|e_{p} \|_{0}$& $r=$& $\|e_{(u,v)} \|_{0}$ & $r=$ & $\|e_{(u,v)}\|_{1}$ & $r=$  &$\|e_{p} \|_{0}$& $r=$ \\
\hline
 4  &  2.41e-01 &  0.00  &   5.64e-01 &  0.00  &   1.72e-02 &  0.00  &  4.22e-01 &  0.00  &   7.43e-01 &  0.00  &   2.25e-02 &  0.00 \\
 8  &  1.06e-01 &  1.18  &   2.71e-01 &  1.06  &   8.20e-03 &  1.07  &  2.52e-01 &  0.74  &   4.98e-01 &  0.58  &   1.54e-02 &  0.55\\
 16 &  3.41e-02 &  1.64  &   9.53e-02 &  1.51  &   2.85e-03 &  1.53  &  1.05e-01 &  1.26  &   2.26e-01 &  1.14  &   6.93e-03 &  1.15\\
 32 &  9.27e-03 &  1.88  &   2.91e-02 &  1.71  &   8.47e-04 &  1.75  &  3.24e-02 &  1.70  &   7.43e-02 &  1.61  &   2.26e-03 &  1.62\\
 64 &  2.38e-03 &  1.96  &   9.05e-03 &  1.69  &   2.41e-04 &  1.81  &  8.63e-03 &  1.91  &   2.10e-02 &  1.83  &   6.38e-04 &  1.83\\
\hline
\multicolumn{7}{|>{\columncolor{yellow!20}}c|}{ Hexagonal elements }&\multicolumn{6}{|>{\columncolor{blue!20}}c|}{ Polygonal elements}\\
\hline
$n$ & $\|e_{(u,v)} \|_{0}$ & $r=$ & $\|e_{(u,v)}\|_{1}$ & $r=$  &$\|e_{p} \|_{0}$& $r=$& $\|e_{(u,v)} \|_{0}$ & $r=$ & $\|e_{(u,v)}\|_{1}$ & $r=$  &$\|e_{p} \|_{0}$& $r=$ \\
\hline
 4  &  5.16e-01  & -     &   7.33e-01 &  -     &   2.34e-02 &  -     & 5.95e-01 &  -     &   7.54e-01 &  -    &    2.26e-02 &  -       \\
 8  &  3.12e-01  & 0.73  &   4.83e-01 &  0.60  &   1.60e-02 &  0.54  & 3.72e-01 &  0.68  &   5.19e-01 &  0.54 &    1.58e-02 &  0.51    \\
 16 &  1.31e-01  & 1.25  &   2.22e-01 &  1.12  &   7.04e-03 &  1.19  & 1.61e-01 &  1.21  &   2.41e-01 &  1.11 &    7.30e-03 &  1.12    \\
 32 &  4.06e-02  & 1.69  &   7.41e-02 &  1.58  &   2.28e-03 &  1.62  & 5.02e-02 &  1.68  &   7.97e-02 &  1.60 &    2.41e-03 &  1.60    \\
 64 &  1.09e-02  & 1.90  &   2.10e-02 &  1.82  &   6.51e-04 &  1.81  & 1.35e-02 &  1.90  &   2.24e-02 &  1.83 &    6.78e-04 &  1.83    \\
\hline
\end{tabular}}
\end{center}
\end{table}

\begin{table}[!h]
\begin{center}
\caption{Error and convergence performance of the SWG scheme ($\kappa=0$, no tangential stability) for the elasticity equation on polygonal meshes of test case 3. $r$ refers to the order of convergence in $O(h^r).$ }\label{table3.2}
{\tiny
\begin{tabular}{||c|cc|cc|cc|cc|cc|cc||}
\hline
\multicolumn{7}{|>{\columncolor{mypink}}c|}{ Triangular elements }&\multicolumn{6}{|>{\columncolor{green!15}}c|}{ Rectangular elements }\\
\hline
$n$ & $\|e_{(u,v)} \|_{0}$ & $r=$ & $\|e_{(u,v)}\|_{1}$ & $r=$  &$\|e_{p} \|_{0}$& $r=$& $\|e_{(u,v)} \|_{0}$ & $r=$ & $\|e_{(u,v)}\|_{1}$ & $r=$  &$\|e_{p} \|_{0}$& $r=$ \\
\hline
 4  &  -    &  -     &   -    &  -      &  -    &  -     &  4.18e-02 &  -     &   5.87e-02 &  -     &   1.27e-03 &  -    \\
 8  &  -    &  -     &   -    &  -      &  -    &  -     &  9.28e-03 &  2.17  &   1.38e-02 &  2.08  &   3.24e-04 &  1.98\\
 16 &  -    &  -     &   -    &  -      &  -    &  -     &  2.20e-03 &  2.08  &   3.38e-03 &  2.03  &   8.16e-05 &  1.99\\
 32 &  -    &  -     &   -    &  -      &  -    &  -     &  5.36e-04 &  2.04  &   8.39e-04 &  2.01  &   2.05e-05 &  1.99\\
\hline
\multicolumn{7}{|>{\columncolor{yellow!20}}c|}{ Hexagonal elements }&\multicolumn{6}{|>{\columncolor{blue!20}}c|}{ Polygonal elements}\\
\hline
$n$ & $\|e_{(u,v)} \|_{0}$ & $r=$ & $\|e_{(u,v)}\|_{1}$ & $r=$  &$\|e_{p} \|_{0}$& $r=$& $\|e_{(u,v)} \|_{0}$ & $r=$ & $\|e_{(u,v)}\|_{1}$ & $r=$  &$\|e_{p} \|_{0}$& $r=$ \\
\hline
 4  &  1.13e-02 &  -     &   1.87e-02 &  -     &   4.61e-04 &  -     & 4.29e-02 &  -    &    5.52e-02 &  -    &    1.80e-03  & -   \\
 8  &  2.58e-03 &  2.14  &   6.20e-03 &  1.59  &   1.46e-04 &  1.65  & 1.11e-02 &  1.95 &    1.69e-02 &  1.71 &    5.66e-04  & 1.67\\
 16 &  7.12e-04 &  1.86  &   2.29e-03 &  1.44  &   4.24e-05 &  1.78  & 2.79e-03 &  1.99 &    4.71e-03 &  1.84 &    1.65e-04  & 1.78\\
 32 &  1.72e-04 &  2.04  &   7.96e-04 &  1.53  &   1.33e-05 &  1.67  & 6.94e-04 &  2.01 &    1.26e-03 &  1.90 &    4.70e-05  & 1.81\\
\hline
\end{tabular}}
\end{center}
\end{table}

Tables \ref{table3.1} and \ref{table3.3} illustrate the numerical performance of the SWG scheme when the tangential stability $S_2(\cdot)$ is included (i.e., $\kappa =1.0$) in the scheme. Table \ref{table3.1} is concerned with tangential stability only on interior edges, and Table \ref{table3.3} includes all the edges. It can be seen that the numerical displacement  converges at the order of $r = 2$ in the $L^2$ norm. For the numerical pressure, the convergence at the cell center is of the order $r>1.5$, and for the numerical displacement in $H^1$ norm, the rate of convergence is at $r\approx 1$ for all element types.

Table \ref{table3.2} illustrates the numerical performance of the SWG scheme when no tangential stability is employed (i.e., $\kappa=0$). Our code shows that the global stiffness matrix of the SWG scheme obtained with the use of triangular elements is singular so that no results are possible for reporting. However, for all other polygonal elements (rectangular, hexagonal, octagonal etc), the numerical displacement converges at the optimal order of $r = 2$ in the $L^2$ norm. For the numerical pressure, the convergence at the cell centers is of the order of $r>1.5$. For the numerical displacement in $H^1$ norm, the convergence is at the order of $r>1.5$. We conjecture that the WG scheme is stable for truly polygonal elements even without the use of tangential stability in the method.

\subsubsection{Cook's membrane test case}
The Cook's membrane is a classical bending dominated test case employed to validate the susceptibility of linear elastic solvers to volumetric locking. The problem consists of a tapered plate clamped on one end and subject to a shear load, taken as $\bm{g} = (0, 1/16)$ in our numerical experiments, on the opposite end, as illustrated in Figure \ref{fig.cook_membrane}.

\begin{figure}[!ht]
\begin{center}
\includegraphics [width=0.35\textwidth]{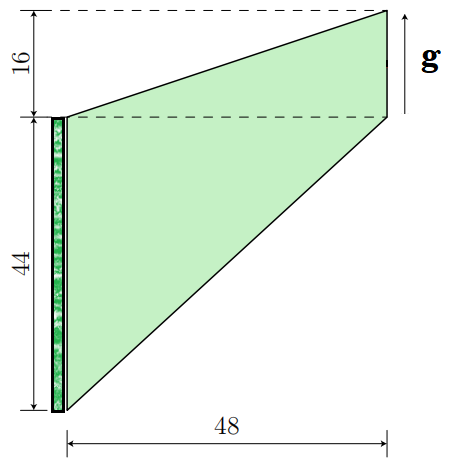}
\end{center}
\caption{Domain and parameter settings of the Cook's membrane computation.}
\label{fig.cook_membrane}
\end{figure}

We considered two cases for demonstrating the performance of the SWG method proposed in this paper. The first case involves a material with Young's modulus $E = 1$ and Poisson ratio $\nu = 1/3$, and the second case is concerned with a nearly incompressible material with Young's modulus $E = 1.12499998125$ and Poisson ratio $\nu = 0.499999975$. Our numerical tests are conducted on several polygonal partitions shown as in Figure \ref{fig:testcaseCK_polymesh}.

\begin{figure}[!h]
             \centering
             \subfigure[Triangle mesh ]{\label{Fig.sub8.4.l}
             \includegraphics [width=0.215\textwidth]{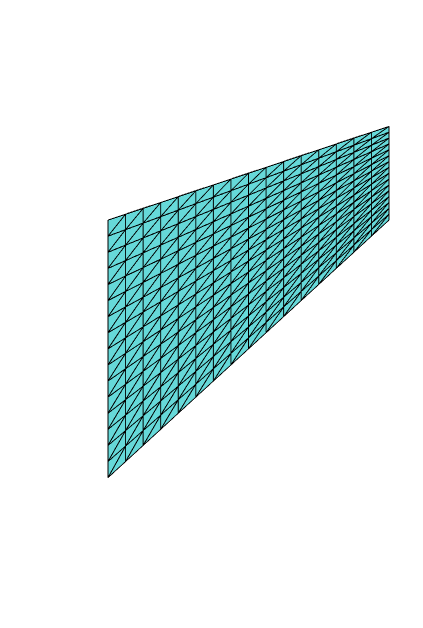}
             }
             \subfigure[Quadrilateral mesh]{\label{Fig.sub8.4.2}
             \includegraphics [width=0.215\textwidth]{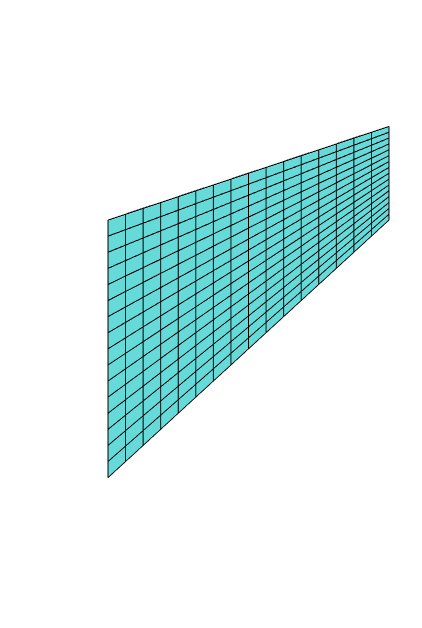}
             }
             \subfigure[Hexagonal mesh]{\label{Fig.sub8.4.3}
             \includegraphics [width=0.215\textwidth]{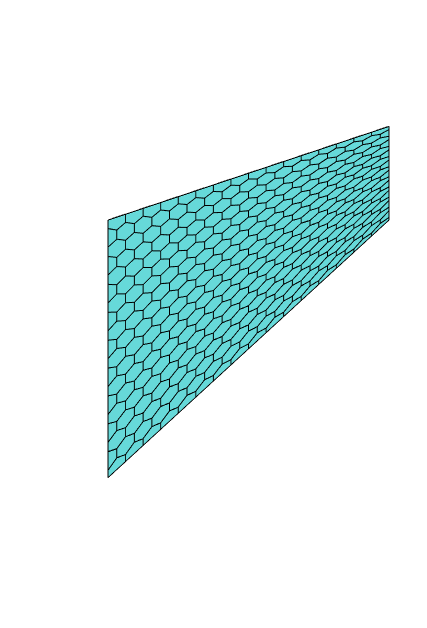}
             \hskip5pt
             }
             \subfigure[Octagonal mesh]{\label{Fig.sub8.4.4}
             \includegraphics [width=0.215\textwidth]{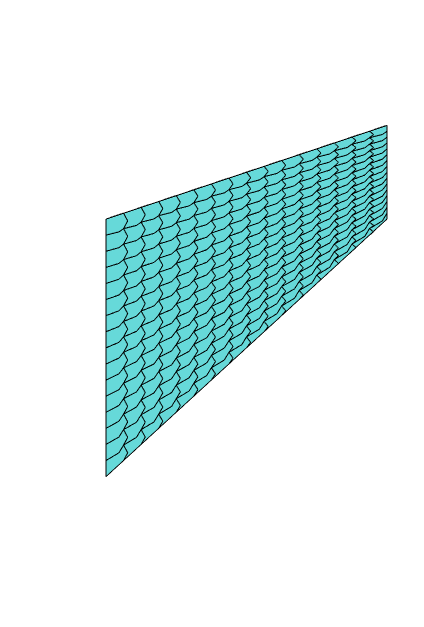}
             }
            \caption{Polygonal partitions employed in the Cook's membrane computation.}
            \label{fig:testcaseCK_polymesh}
\end{figure}

Out numerical experiment shows that the SWG method works really well for both cases on all these polygonal partitions when tangential stability term $S_2(\cdot)$ (i.e., $\kappa =1$) is chosen. Figure \ref{fig:testcaseCKdef_polymesh} illustrates the deformation of the material colored by the pseudo pressure field for the case of $E = 1.12499998125$ and Poisson ratio $\nu = 0.499999975$.

\begin{figure}[!h]
             \centering
             \subfigure[Triangles ]{\label{Fig.sub88.4.l}
             \includegraphics [width=0.215\textwidth]{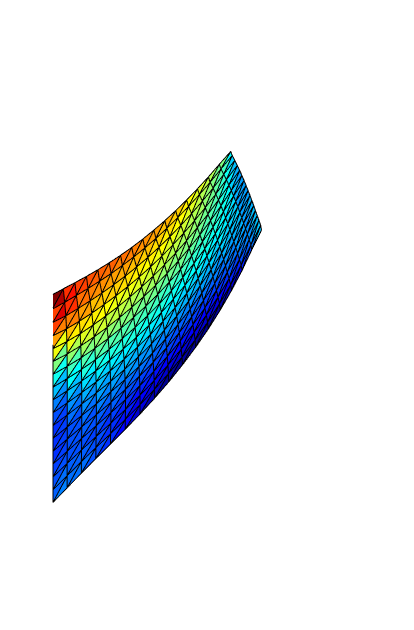}
             }
             \subfigure[Quadrilaterals]{\label{Fig.sub88.4.2}
             \includegraphics [width=0.215\textwidth]{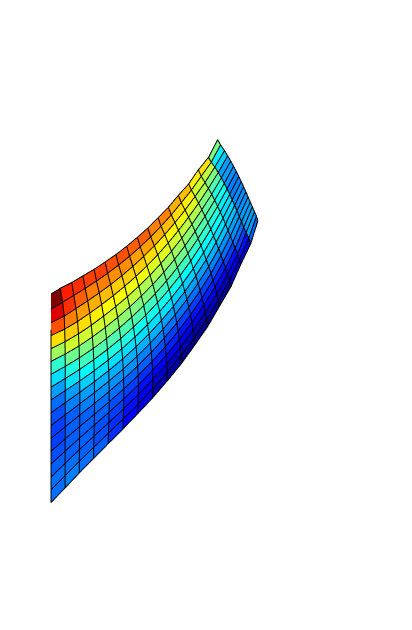}
             }
             \subfigure[Hexagons]{\label{Fig.sub88.4.3}
             \includegraphics [width=0.215\textwidth]{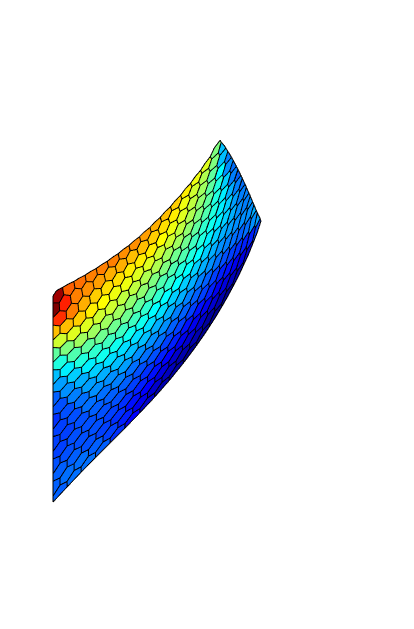}
             \hskip5pt
             }
             \subfigure[Octagons]{\label{Fig.sub88.4.4}
             \includegraphics [width=0.215\textwidth]{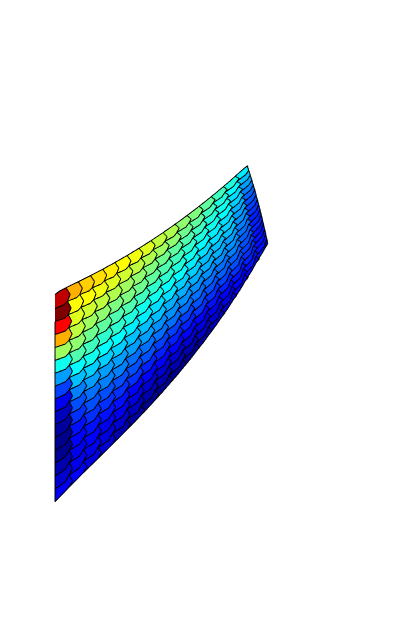}
             }
            \caption{Cook's membrane computation with the scheme \eqref{equ.elasticity-Mix-SWG}, deformation and pressure field: (a), (b), (c), (d) with various polygonal partitions, $h=1/16$,$E = 1.12499998125$, $\nu = 0.499999975$, $\kappa=1.$}
            \label{fig:testcaseCKdef_polymesh}
\end{figure}

The SWG method without tangential stability (i.e., $\kappa=0$ in the stabilizer $S(\cdot)$) was also tested in the Cook's membrane computing. Recall that the numerical results from Test Cases 1 and 2 indicate the method is indeed stable and accurate for any non-triangular finite element partitions. The numerical results for the Cook's membrane problem obtained from the SWG method with $\kappa=0$ are shown in Figure \ref{fig:testcaseCK_deformation_pressure}. It can be seen that, when quadrilateral elements are employed, the approximate solution obtained from the SWG scheme with $\kappa=0$ are very close to those obtained with $\kappa=1$ for both cases; the performance of the SWG method is equally satisfactory on all other polygonal partitions that we tested.

As there is no analytical solution available, the vertical displacement at the mid point of the right end of the plate, $Q = (48, 52)$, is compared against the reference values reported in \cite{Ferdinando_2005,Sevilla_linearelastostatic_2019}, given by $16.442$ for the second test case.

\begin{figure}[!h]
             \centering
\subfigure[ $\kappa=1$,~~~~~ case 1: $E = 1$, $\nu = 1/3$,~~~~~~~~~~~~~~~~case 2: $E = 1.12499998125$, $\nu = 0.499999975$,~~~~~~~~]{\label{Fig.sub7.4.1}
             \includegraphics [width=0.475\textwidth]{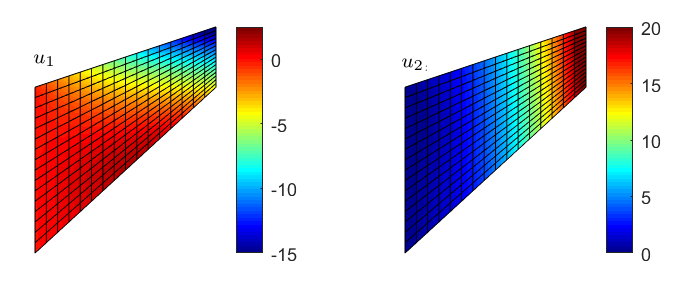}
             \includegraphics [width=0.475\textwidth]{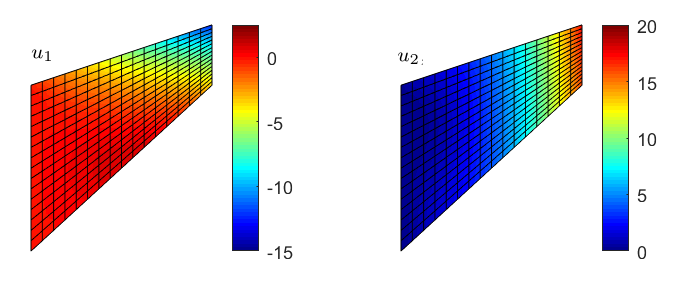}
             }
             \subfigure[$\kappa=0$,~~~~~ case 1: $E = 1$, $\nu = 1/3$,~~~~~~~~~~~~~~~~case 2: $E = 1.12499998125$, $\nu = 0.499999975$,~~~~~~~~ ]{\label{Fig.sub7.4.2}
             \includegraphics [width=0.475\textwidth]{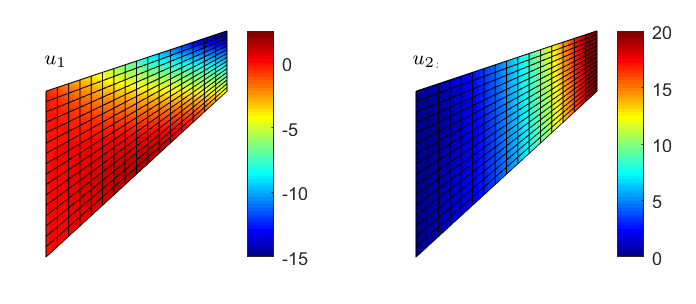}
             \includegraphics [width=0.475\textwidth]{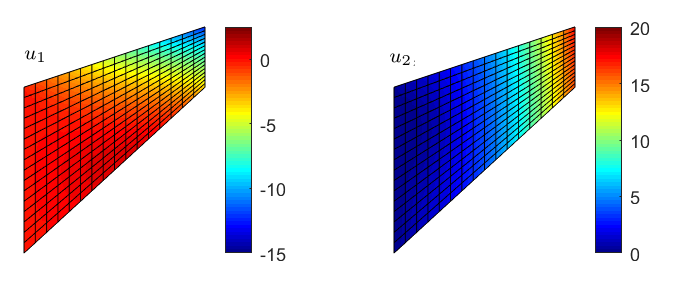}
             }
            \caption{Cook's membrane computation using the SWG scheme \eqref{equ.elasticity-Mix-SWG} with $\kappa=1$ and $\kappa=0$, contour plots of the displacement $u_1$ and $u_2$, quadrilateral partition with $h=1/16$.}
            \label{fig:testcaseCK_deformation_pressure}
\end{figure}

Figure \ref{fig:testcaseCK_convergence} shows the convergence of the vertical displacement at the point $Q$ for the second case with $\nu = 0.499999975$ on four different type of polygonal partitions. The results indicate that convergence occurs in the fifth refinement for the triangular mesh, with $12,416$ degree of freedom (i.e. $8,192$ triangular elements). The computed displacement at the mid point of the right end of the plate is within a $1\%$ difference with respect to the results reported in \cite{Ferdinando_2005,Sevilla_linearelastostatic_2019}. The numerical solution converges with a bit more degree of freedoms for the quadrilateral partitions, and so as for other polygonal meshes. In general, we need less than $18,000$ degree of freedoms to achieve an error less than $5\%$ with respect to the reference value for all type of meshes.

\begin{figure}[!h]
\centering
\includegraphics [width=0.675\textwidth]{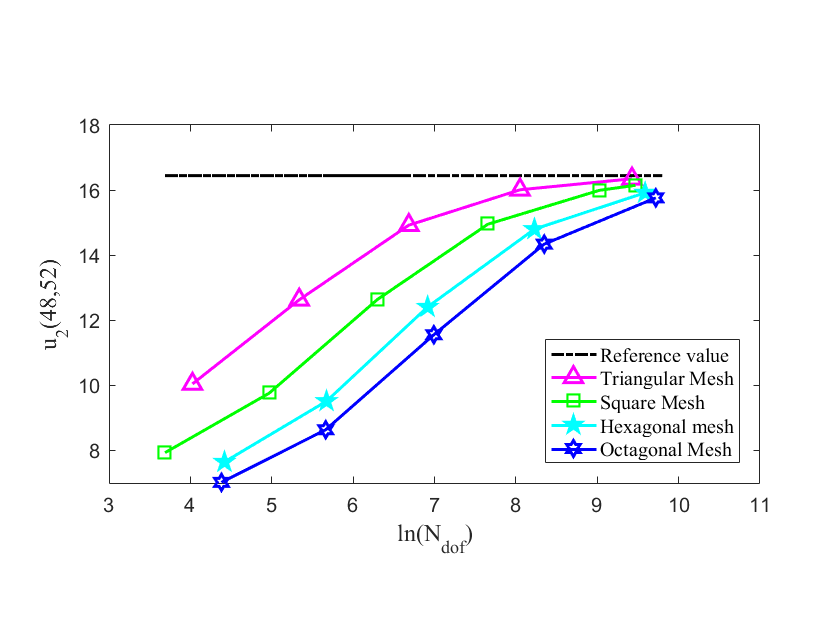}
\caption{Cook's membrane problem: evolution of the vertical displacement at the mid point of the right end of the plate as a function of the total number of degrees of freedom with various polygonal partitions.}\label{fig:testcaseCK_convergence}
\end{figure}

\subsection{Numerical experiments in 3D}
For the numerical experiments in 3D, we first consider some synthetic test cases followed by the shear loaded beam bench mark.
The computational domain is partitioned into different type of meshes, including tetrahedron, cube and hexahedron prism as shown in Figure \ref{fig:testcase3D_polymesh}.

\begin{figure}[!h]
             \subfigure[Tetrahedral mesh ]{\label{Fig.sub12.4.l}
             \includegraphics [width=0.305\textwidth]{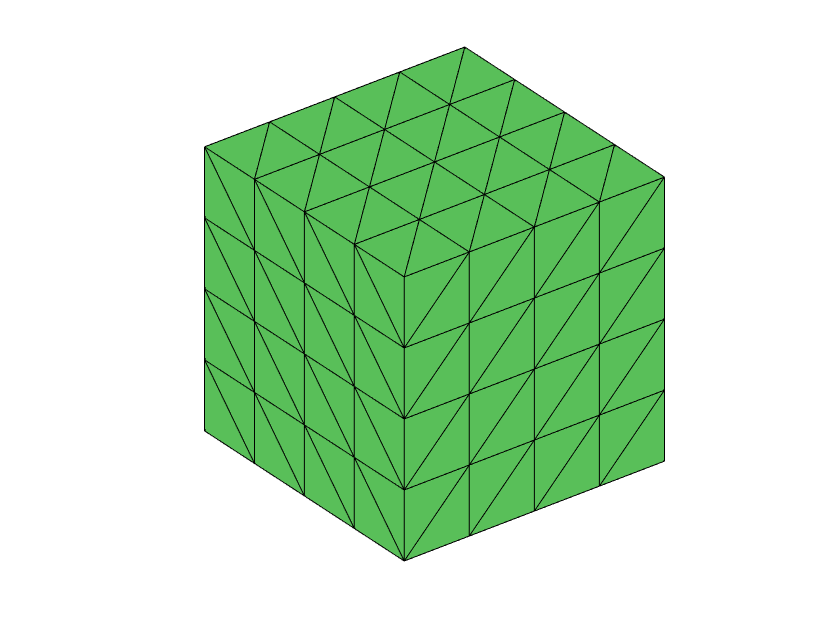}
             }
             \subfigure[Cubic mesh]{\label{Fig.sub12.4.2}
             \includegraphics [width=0.305\textwidth]{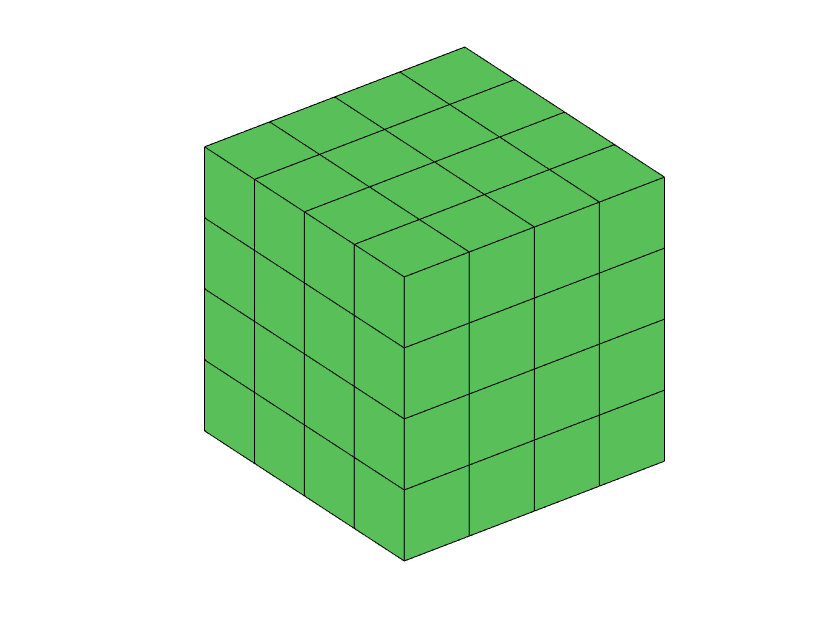}
             }
             \subfigure[Hexagon-based prismal mesh]{\label{Fig.sub12.4.3}
             \includegraphics [width=0.305\textwidth]{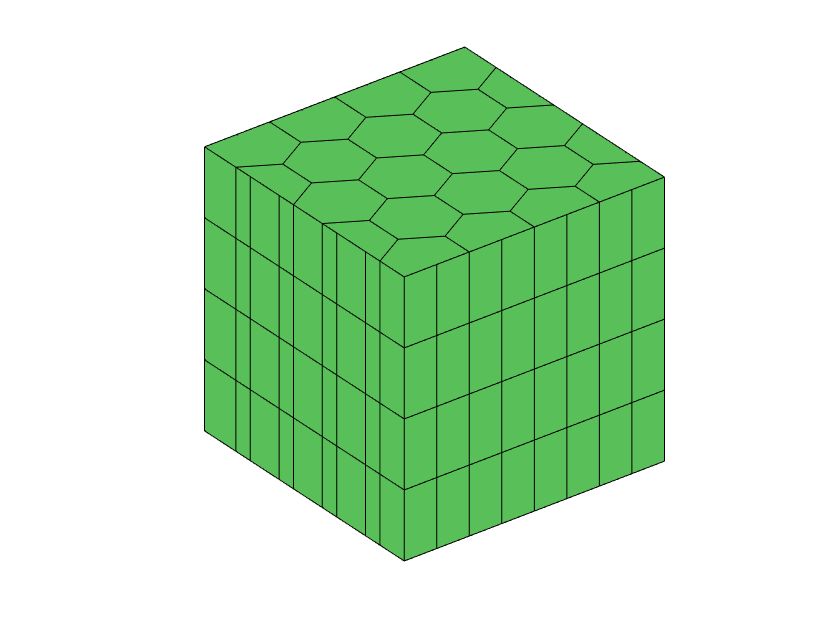}
             }
            \caption{Polyhedral partitions for $3D$ domains.}
            \label{fig:testcase3D_polymesh}
\end{figure}

As the numerical method was implemented in the MatLab platform, the computation for 3D problems is only performed on relatively coarse meshes.
For 3D problems, we note that tetrahedral partitions involve much more degree of freedoms than other polyhedral partitions (e.g., cubic partition) and are thus computationally more expensive than others.

\subsubsection{Test Case 3}\label{testcase3D1}
The computational domain in this 3D test is given by $\Omega=(0,1)\times (0,1)\times (0,1)$, which is uniformly partitioned into polyhedral elements of size $h$. The exact solution of the model problem is given by
\begin{equation*}
\begin{array}{lll}
\bm{u} =\left(
\begin{array}{lll}
\sin(x)\cos(y)\\
\cos(x)\sin(y)\\
(z-0.5)^2
\end{array}
\right),\\
\end{array}
\end{equation*}
The right-hand side function in the linear elasticity equation and the Dirichlet boundary data are chosen to match the exact solution.
For simplicity, the test problem assumed the following parameters: $E=1$ and $\nu =0.1$.

\begin{figure}[!h]
             \centering
             \subfigure[Cubic mesh]{\label{Fig.sub3.4.2}
             \includegraphics [width=0.3\textwidth]{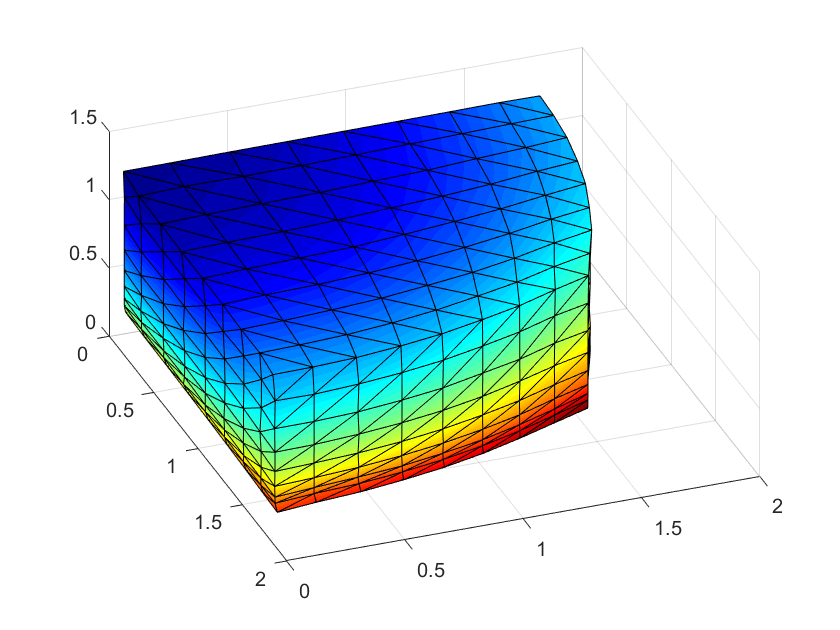}}
             \subfigure[Tetrahedral mesh]{\label{Fig.sub3.4.3}
             \includegraphics [width=0.3\textwidth]{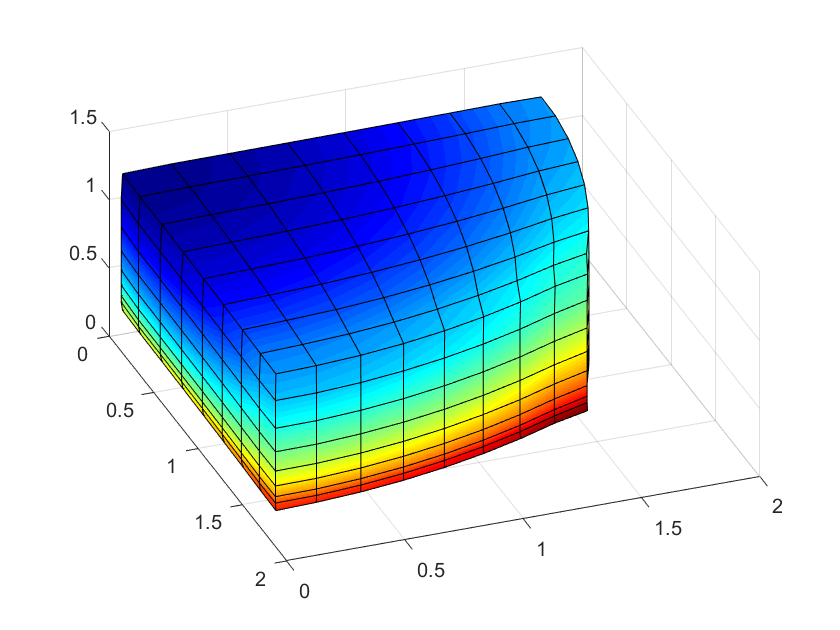}}
             \subfigure[Hexagon-based prismal mesh]{\label{Fig.sub3.4.4}
             \includegraphics [width=0.3\textwidth]{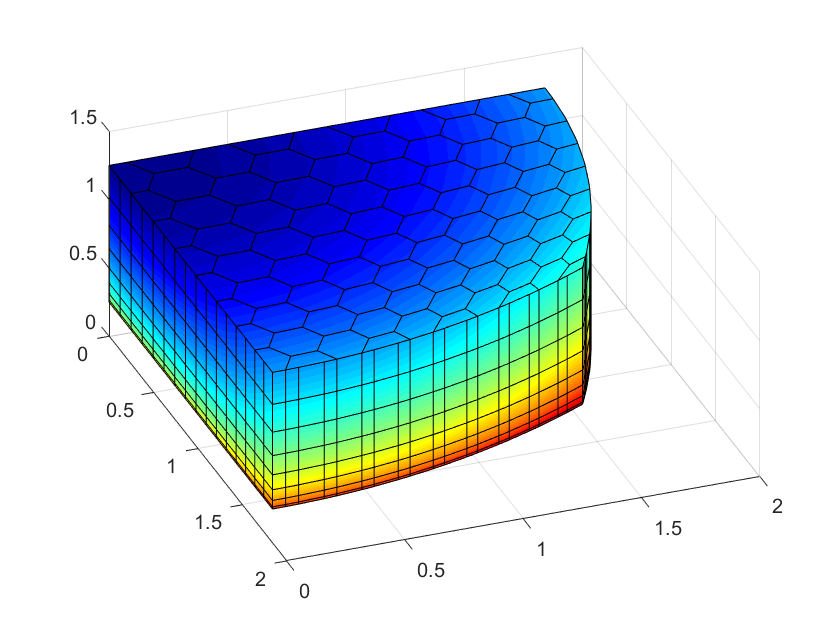}}
            \caption{Numerical results of test case 3 in 3D for the SWG scheme \eqref{equ.elasticity-Mix-SWG}, deformation of the elastic material with color provided by the magnitude of the numerical pseudo-pressure $p_h$: (a), (b), (c), polyhedral partitions with $h=1/8$.}
            \label{fig:testcase3_deformation_pressure}
\end{figure}

Tables \ref{table3D.11} and \ref{table3D.13} illustrate the numerical performance of the SWG scheme when tangential stability is included in the stabilizer $S(\cdot)$ (i.e., $\kappa =1$). Table \ref{table3D.11} is concerned with the addition of the tangential stability at all interior faces, while Table \ref{table3D.13} includes the tangential stability at all faces. It is seen that the numerical displacement converges at the order of $r \in [1.5,2]$ in the $L^2$ norm. For the numerical pressure, the convergence at the cell centers is seen to have the order of $r = 1$. For the numerical displacement in $H^1$ norm, the convergence is at the order of $r = 1$.

\begin{table}[!h]
\begin{center}
\caption{Error and convergence performance of the SWG scheme ($\kappa =1$, tangential stability at only interior faces) for the elasticity equation on polyhedral partitions of test case 3 in 3D. $r$ refers to the order of convergence in $O(h^r).$ }\label{table3D.11}
{\tiny
\begin{tabular}{||c|cc|cc|cc|cc|cc|cc||}
\hline
\multicolumn{7}{|>{\columncolor{mypink}}c|}{ Tetrahedral elements }&\multicolumn{6}{|>{\columncolor{green!15}}c|}{ Cubic elements }\\
\hline
$n$ & $\|e_{(u,v)} \|_{0}$ & $r=$ & $\|e_{(u,v)}\|_{1}$ & $r=$  &$\|e_{p} \|_{0}$& $r=$& $\|e_{(u,v)} \|_{0}$ & $r=$ & $\|e_{(u,v)}\|_{1}$ & $r=$  &$\|e_{p} \|_{0}$& $r=$ \\
\hline
2 & 1.06e-01 &   -     &  9.20e-01  &   -   &   3.61e-02  &  -  &   1.17e-02 &   -    &   3.47e-02 &   -    &   9.24e-04 &   -   \\
4 & 3.81e-02 &  1.47   &  6.24e-01  & 0.56  &   2.02e-02  & 0.84&   2.91e-03 &  2.01  &   1.15e-02 &  1.60  &   8.03e-04 &  0.20\\
8 & 1.15e-02 &  1.73   &  3.62e-01  & 0.78  &   1.01e-02  & 0.99&   6.43e-04 &  2.18  &   3.10e-03 &  1.89  &   2.48e-04 &  1.69\\
\hline
\multicolumn{7}{|>{\columncolor{yellow!20}}c|}{ Hexagon-based prismal elements }\\
\cline{1-7}
$n$ & $\|e_{(u,v)} \|_{0}$ & $r=$ & $\|e_{(u,v)}\|_{1}$ & $r=$  &$\|e_{p} \|_{0}$& $r=$ \\
\cline{1-7}
2 &    3.64e-02 &    -   &   1.04e-01 &    -  &    6.01e-03 &    - \\
4 &    6.82e-03 &  2.42  &   3.18e-02 &  1.71 &    1.76e-03 &  1.77\\
8 &    1.55e-03 &  2.13  &   1.29e-02 &  1.30 &    7.20e-04 &  1.29\\
\cline{1-7}
\end{tabular}}
\end{center}
\end{table}

\begin{table}[!h]
\begin{center}
\caption{Error and convergence performance of the SWG scheme ($\kappa =1$, tangential stability at all faces) for the elasticity equation on polyhedral meshes of test case 3 in 3D. $r$ refers to the order of convergence in $O(h^r).$ }\label{table3D.13}
{\tiny
\begin{tabular}{||c|cc|cc|cc|cc|cc|cc||}
\hline
\multicolumn{7}{|>{\columncolor{mypink}}c|}{ Tetrahedral elements }&\multicolumn{6}{|>{\columncolor{green!15}}c|}{ Cubic elements }\\
\hline
$n$ & $\|e_{(u,v)} \|_{0}$ & $r=$ & $\|e_{(u,v)}\|_{1}$ & $r=$  &$\|e_{p} \|_{0}$& $r=$& $\|e_{(u,v)} \|_{0}$ & $r=$ & $\|e_{(u,v)}\|_{1}$ & $r=$  &$\|e_{p} \|_{0}$& $r=$ \\
\hline
2 & 1.14e-01 &    -   &   9.77e-01 &    -   &   4.14e-02 &    - &   2.76e-02 &    -   &   8.02e-02 &    -    &  5.57e-03 &    - \\
4 & 3.99e-02 &  1.51  &   6.46e-01 &  0.60  &   2.24e-02 &  0.88&   8.03e-03 &  1.78  &   2.84e-02 &  1.50   &  2.06e-03 &  1.43\\
8 & 1.18e-02 &  1.76  &   3.70e-01 &  0.81  &   1.08e-02 &  1.05&   1.64e-03 &  2.29  &   7.40e-03 &  1.94   &  5.22e-04 &  1.98\\
\hline
\multicolumn{7}{|>{\columncolor{yellow!20}}c|}{ Hexagon-based prismal elements }\\
\cline{1-7}
$n$ & $\|e_{(u,v)} \|_{0}$ & $r=$ & $\|e_{(u,v)}\|_{1}$ & $r=$  &$\|e_{p} \|_{0}$& $r=$ \\
\cline{1-7}
2 &    6.96e-02 &    -   &   1.90e-01  &   -   &   1.41e-02 &    -   \\
4 &    1.43e-02 &  2.29  &   5.55e-02  & 1.77  &   3.57e-03 &  1.98\\
8 &    3.07e-03 &  2.21  &   2.28e-02  & 1.28  &   1.44e-03 &  1.30\\
\cline{1-7}
\end{tabular}}
\end{center}
\end{table}

Table \ref{table3D.12} illustrates the numerical performance of the SWG scheme with $\kappa=0$ (i.e., no tangential stability). Our computation indicates that the stiffness matrices for the SWG scheme on tetrahedral partitions are all singular so that no numerical results are possible to report. For other polyhedral partitions, such as the cubic and hexagon-based prismal elements, the numerical displacement turns out to be quite accurate and furthermore convergent to the exact solution at the optimal order of $r = 2$ in the $L^2$ norm. For the numerical pressure and for the numerical displacement in $H^1$ norm, the convergence is at the order of $r \ge 1$, and some are at $r=1.5$.

\begin{table}[!h]
\begin{center}
\caption{Error and convergence performance of the SWG scheme ($\kappa =0$, no tangential stability) for the elasticity equation on polyhedral partitions of test case 3 in 3D. $r$ refers to the order of convergence in $O(h^r).$ }\label{table3D.12}
{\tiny
\begin{tabular}{||c|cc|cc|cc|cc|cc|cc||}
\hline
\multicolumn{7}{|>{\columncolor{mypink}}c|}{ Tetrahedral elements }&\multicolumn{6}{|>{\columncolor{green!15}}c|}{ Cubic elements }\\
\hline
$n$ & $\|e_{(u,v)} \|_{0}$ & $r=$ & $\|e_{(u,v)}\|_{1}$ & $r=$  &$\|e_{p} \|_{0}$& $r=$& $\|e_{(u,v)} \|_{0}$ & $r=$ & $\|e_{(u,v)}\|_{1}$ & $r=$  &$\|e_{p} \|_{0}$& $r=$ \\
\hline
2 & -  & -  & -  & -  & -  & -  &   5.74e-03  &  -     &  1.87e-02 &   -    &   4.27e-04 &   -  \\
4 & -  & -  & -  & -  & -  & -  &   1.34e-03  & 2.10   &  4.97e-03 &  1.91  &   1.89e-04 &  1.17\\
8 & -  & -  & -  & -  & -  & -  &   3.30e-04  & 2.02   &  1.36e-03 &  1.87  &   5.92e-05 &  1.68\\
\hline
\multicolumn{7}{|>{\columncolor{yellow!20}}c|}{ Hexagon-based prismal elements }\\
\cline{1-7}
$n$ & $\|e_{(u,v)} \|_{0}$ & $r=$ & $\|e_{(u,v)}\|_{1}$ & $r=$  &$\|e_{p} \|_{0}$& $r=$ \\
\cline{1-7}
2 &    2.24e-02 &  -     &   7.89e-02  & -     &   4.25e-03 &  -   \\
4 &    5.40e-03 &  2.05  &   2.80e-02  & 1.49  &   1.39e-03 &  1.61\\
6 &    1.29e-03 &  2.06  &   1.12e-02  & 1.32  &   5.81e-04 &  1.26\\
\cline{1-7}
\end{tabular}}
\end{center}
\end{table}
\subsubsection{Test Case 4}
The computational domain for test case 4 is given by $\Omega=(0,1)\times (0,1)\times (0,1)$. The exact solution for the model problem is given by
\begin{equation*}
\begin{array}{lll}
\bm{u} =\left(
\begin{array}{c}
-xyz\\
3\mu z(x^2-y^2) - z^3\\
3yz^2 +\mu y(y^2-3x^2)
\end{array}
\right).\\
\end{array}
\end{equation*}
The right-hand side function in the linear elasticity equation and the Dirichlet boundary data are computed to match the exact solution.
In addition, we have $E=1$ and $\nu =0.1$.

\begin{figure}[!h]
             \centering
             \subfigure[Tetrahedral mesh]{\label{Fig.sub4.4.2}
             \includegraphics [width=0.3\textwidth]{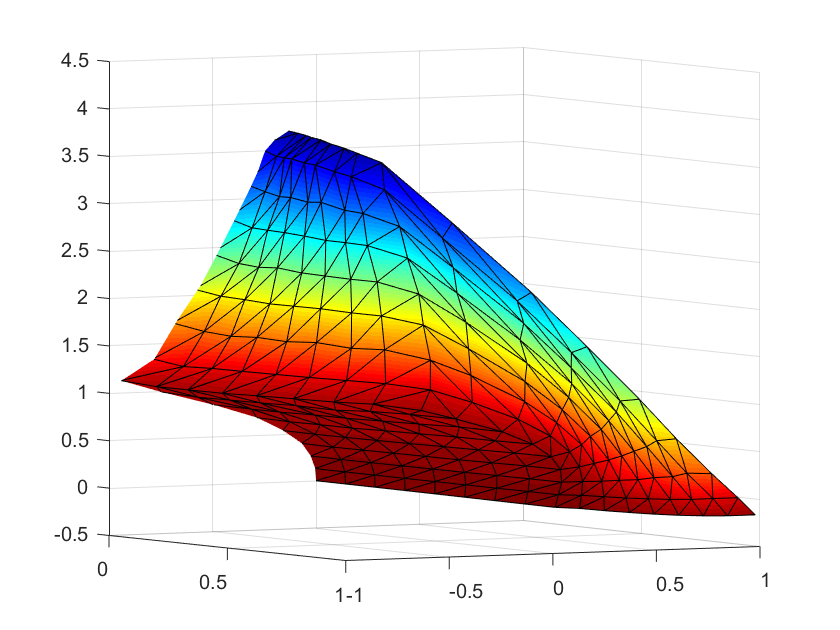}}
             \subfigure[Cubic mesh]{\label{Fig.sub4.4.3}
             \includegraphics [width=0.3\textwidth]{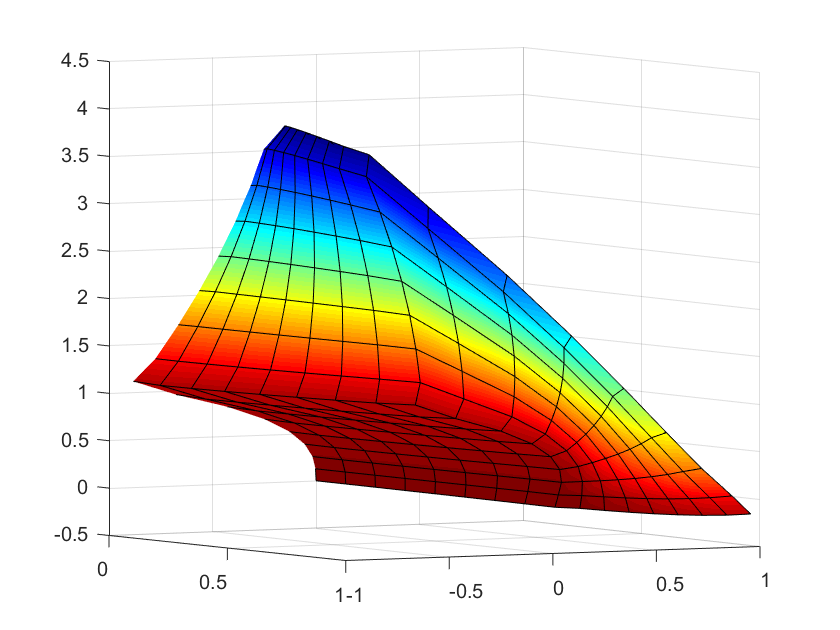}}
             \subfigure[Hexagon-based prismal mesh]{\label{Fig.sub4.4.4}
             \includegraphics [width=0.3\textwidth]{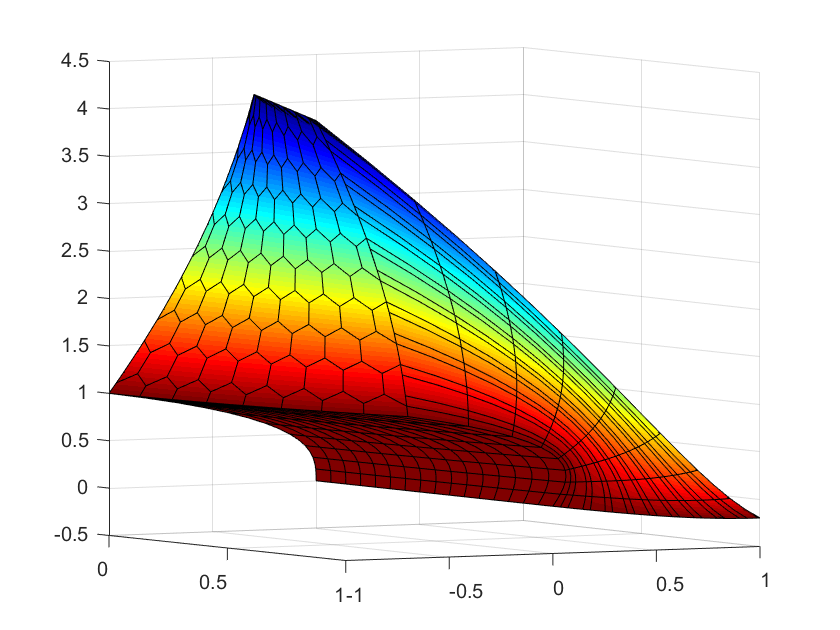}}
            \caption{Numerical results for the SWG scheme \eqref{equ.elasticity-Mix-SWG} for test case 4 in 3D, deformation plot of the elastic material with color provided by the magnitude of the numerical pseudo-pressure $p_h$: (a), (b), (c), polyhedral partition with $h=1/16$.}
            \label{fig:testcase4_deformation_pressure}
\end{figure}

Table \ref{table3D.21} illustrates the numerical performance of the SWG scheme when tangential stability is added on interior faces (i.e., $\kappa =1.0)$. It is suggested that the numerical displacement converges at the order of $r = 2$ in the $L^2$ norm. For the numerical pressure, the order of convergence at the cell centers is of $r\approx 1$ or better for cubical partitions and the hexagon-based prismal partiion. For the numerical displacement in $H^1$ norm, the convergence is at the order of $r \ge 1$. The convergence for the numerical displacement in $H^1$ norm is lower than the optimal order of $r=1$ for tetrahedral elements, and this might be caused by the missing of tangential stability on the boundary faces.

\begin{table}[!h]
\begin{center}
\caption{Error and convergence performance of the SWG scheme ($\kappa=1$, tangential stability on interior faces only) for the elasticity equation on polyhedral partitions of test case 4 in 3D. $r$ refers to the order of convergence in $O(h^r).$ }\label{table3D.21}
{\tiny
\begin{tabular}{||c|cc|cc|cc|cc|cc|cc||}
\hline
\multicolumn{7}{|>{\columncolor{mypink}}c|}{ Tetrahedral elements }&\multicolumn{6}{|>{\columncolor{green!15}}c|}{ Cubic elements }\\
\hline
$n$ & $\|e_{(u,v)} \|_{0}$ & $r=$ & $\|e_{(u,v)}\|_{1}$ & $r=$  &$\|e_{p} \|_{0}$& $r=$& $\|e_{(u,v)} \|_{0}$ & $r=$ & $\|e_{(u,v)}\|_{1}$ & $r=$  &$\|e_{p} \|_{0}$& $r=$ \\
\hline
2 &2.35e-01 &  0.00   &  2.01e+00  & 0.00   &  8.99e-02  & 0.00&   1.04e-01 &   -    &   3.02e-01  &  -     &  8.47e-03 &   -  \\
4 &9.47e-02 &  1.31   &  1.49e+00  & 0.43   &  5.34e-02  & 0.75&   2.70e-02 &  1.95  &   9.67e-02  & 1.64   &  3.06e-03 &  1.47\\
8 &3.03e-02 &  1.64   &  9.27e-01  & 0.69   &  2.77e-02  & 0.95&   6.79e-03 &  1.99  &   2.90e-02  & 1.74   &  9.56e-04 &  1.68\\
\hline
\multicolumn{7}{|>{\columncolor{yellow!20}}c|}{ Hexagon-based prismal elements }\\
\cline{1-7}
$n$ & $\|e_{(u,v)} \|_{0}$ & $r=$ & $\|e_{(u,v)}\|_{1}$ & $r=$  &$\|e_{p} \|_{0}$& $r=$  \\
\cline{1-7}
2  &    1.46e-01 &  0.00  &   4.28e-01 &  0.00  &   1.87e-02  & 0.00\\
4  &    3.48e-02 &  2.07  &   1.30e-01 &  1.72  &   5.24e-03  & 1.83\\
8  &    8.80e-03 &  1.98  &   4.25e-02 &  1.61  &   1.81e-03  & 1.54\\
\cline{1-7}
\end{tabular}}
\end{center}
\end{table}

Table \ref{table3D.22} illustrates the numerical performance of the SWG scheme when no tangential stabilities are included in the stabilizer $S(\cdot)$ (i.e., $\kappa=0$). The numerical tests indicate that the stiffness matrices for the SWG scheme on tetrahedral elements are singular so that no numerical results are possible. For other polyhedral partitions such as cubic partitions and hexagon-based prismal partitions, the numerical displacement is seen to converge at the optimal order of $r = 2$ in the $L^2$ norm. For the numerical pressure and for the numerical displacement in $H^1$ norm, the error is at the order of $r\ge 1.5$.

\begin{table}[!h]
\begin{center}
\caption{Test case 4: error and convergence performance of the SWG scheme ($\kappa =0$, no tangential stability) for the elasticity equation on polyhedral partitions. $r$ refers to the order of convergence in $O(h^r).$ }\label{table3D.22}
{\tiny
\begin{tabular}{||c|cc|cc|cc|cc|cc|cc||}
\hline
\multicolumn{7}{|>{\columncolor{mypink}}c|}{ Tetrahedral elements }&\multicolumn{6}{|>{\columncolor{green!15}}c|}{ Cubic elements }\\
\hline
$n$ & $\|e_{(u,v)} \|_{0}$ & $r=$ & $\|e_{(u,v)}\|_{1}$ & $r=$  &$\|e_{p} \|_{0}$& $r=$& $\|e_{(u,v)} \|_{0}$ & $r=$ & $\|e_{(u,v)}\|_{1}$ & $r=$  &$\|e_{p} \|_{0}$& $r=$ \\
\hline
2 & -  & -  & -  & -  & -  & -  &   1.04e-01 &  -     &   3.02e-01  & -     &   8.47e-03 &  -    \\
4 & -  & -  & -  & -  & -  & -  &   2.70e-02 &  1.95  &   9.67e-02  & 1.64  &   3.06e-03 &  1.47\\
8 & -  & -  & -  & -  & -  & -  &   6.79e-03 &  1.99  &   2.90e-02  & 1.74  &   9.56e-04 &  1.68\\
\hline
\multicolumn{7}{|>{\columncolor{yellow!20}}c|}{ Hexagon-based prismal elements }\\
\cline{1-7}
$n$ & $\|e_{(u,v)} \|_{0}$ & $r=$ & $\|e_{(u,v)}\|_{1}$ & $r=$  &$\|e_{p} \|_{0}$& $r=$ \\
\cline{1-7}
2 &    1.45e-01 &  -     &   4.25e-01 &  -     &   1.33e-02 &  -   \\
4 &    3.49e-02 &  2.06  &   1.30e-01 &  1.71  &   4.44e-03 &  1.59\\
6 &    8.77e-03 &  1.99  &   4.19e-02 &  1.63  &   1.53e-03 &  1.53\\
\cline{1-7}
\end{tabular}}
\end{center}
\end{table}

\subsubsection{Shear-loaded beam in 3D}
The goal of this test case is to study the performance of the SWG scheme for the cantilever beam loaded in shear. The domain $\Omega$ for this problem is $\Omega=(-1,1)\times (-1,1)\times(0,L)$, which is occupied by an isotropic material with Young's modulus $E$ and Poisson's ratio $\nu$, and is subject to constant traction, given by $\bm{\varrho} = [0, F, 0]$, on the face passing through the origin. In the present numerical study, the length of the beam is $L = 10$, the shear load is taken as F = $0.1$, and the material properties are selected as $E = 25$ and $\nu = 0.3$.
The Neumann boundary condition is applied on the face $(z=0)$ while the Dirichlet boundary condition is applied to other sides of the domain. The displacement fields $\bm{u}$ corresponding to these stresses, up to the addition of a rigid body motion, is given by
\begin{equation*}
\begin{array}{lll}
\bm{u} =\left(
\begin{array}{c}
-\displaystyle\frac{3F\nu}{4E}xyz\\
 \displaystyle\frac{F}{8E}(3\nu z (x^2 -y^2) -z^3)\\
 \displaystyle\frac{F}{8E}(3yz^2 + \nu y(y^2 - 3x^2)) + \displaystyle\frac{2(1+\nu)}{E}\psi(x,y,z)
\end{array}
\right),\\
\end{array}
\end{equation*}
where $\psi(x,y,z)$, is the anti-derivative of the stress component $\sigma_{23}$  \eqref{eq:stress23} with respect to y.
The expressions for stresses are available in Barber \cite{Barber_2010,Gain_2014} and are quoted here for completeness:
\begin{eqnarray}
\sigma_{11}&=&\sigma_{22}=\sigma_{12}=0,\; \sigma_{33} = \displaystyle\frac{3F}{4}yz,\; \nonumber\\
\sigma_{31}&=&\displaystyle\frac{3F\nu}{2\pi^2(1+\nu)}\sum_{n=1}^{\infty}\frac{(-1)^n}{n^2\cosh(n\pi)}\sin(n\pi x)\sinh(n \pi y),\nonumber\\
\sigma_{23}&=&\displaystyle\frac{3F(1-y^2)}{8} + \frac{F\nu (3x^2-1)}{8(1+\nu)} \label{eq:stress23}\\
           & & \displaystyle-\frac{3F\nu}{2\pi^2(1+\nu)} \sum_{n=1}^{\infty}\frac{(-1)^n}{n^2\cosh(n\pi)}\cos(n\pi x)\cosh(n \pi y). \nonumber
\end{eqnarray}

The numerical simulation is performed on three types of polyhedral partitions: tetrahedral, cubic, and hexagon-based prismal shown as in Figure \ref{fig:testcaseCL_polymesh}. Our numerical experiments (Table \ref{table3D.31}) suggest that the SWG approximations are convergent in $H^1$ and $L^2$ norms.

\begin{figure}[!h]
             \subfigure[Tetrahedral partition ]{\label{Fig.sub10.4.l}
             \includegraphics [width=0.305\textwidth]{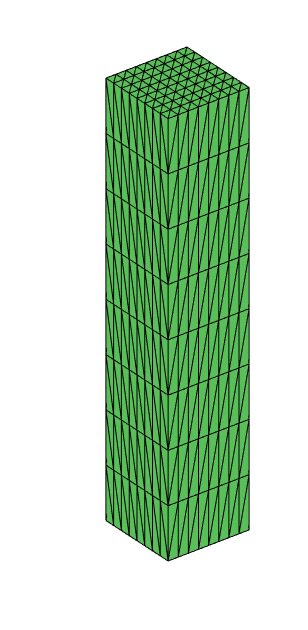}
             }
             \subfigure[Cubic partition]{\label{Fig.sub10.4.2}
             \includegraphics [width=0.325\textwidth]{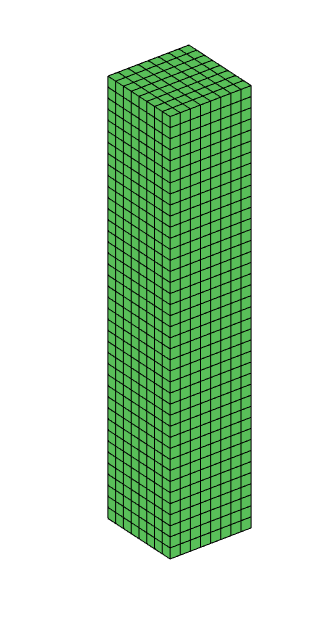}
             }\hskip-10pt
             \subfigure[Hexagon-based prismal mesh]{\label{Fig.sub10.4.3}
             \includegraphics [width=0.305\textwidth]{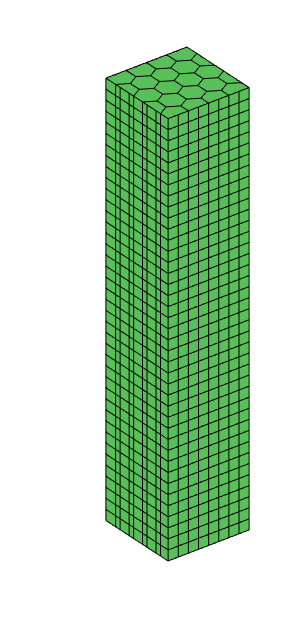}
             }
            \caption{Polyhedral meshes with $h=1/4$ used for the 3D Shear-loaded beam.}
            \label{fig:testcaseCL_polymesh}
\end{figure}

\begin{figure}[!h]
             \subfigure[]{\label{Fig.sub11.4.l}
             \includegraphics [width=0.305\textwidth]{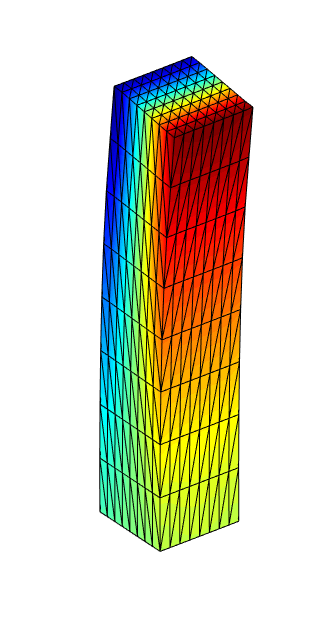}
             }
             \subfigure[]{\label{Fig.sub11.4.2}
             \includegraphics [width=0.305\textwidth]{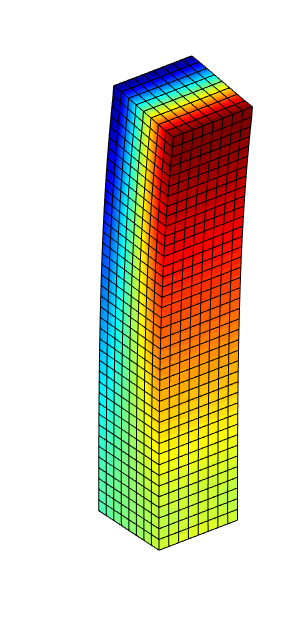}
             }
             \subfigure[]{\label{Fig.sub11.4.3}
             \includegraphics [width=0.305\textwidth]{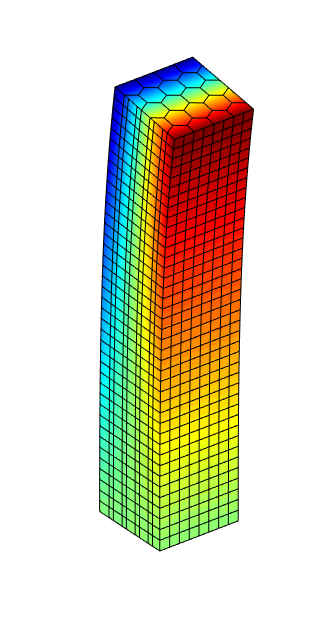}
             }
            \caption{3D Shear-loaded beam: numerical solution from the SWG scheme \eqref{elasticity_mixed1}-\eqref{elasticity_mixed2}, $\kappa=1$ and tangential stability for interior faces only, deformation plot of the elastic material with color provided by the magnitude of the numerical pseudo-pressure $p_h$ under different partitions.}
            \label{fig:testcaseCLdeform_polymesh}
\end{figure}

\begin{table}[!h]
\begin{center}
\caption{Error and convergence performance of the SWG scheme ($\kappa=1$, tangential stability on interior faces only) for the Elasticity equation on polygonal meshes of the shear-loaded beam test case. $r$ refers to the order of convergence in $O(h^r).$ }\label{table3D.31}
{\tiny
\begin{tabular}{||c|cc|cc|cc|cc|cc|cc||}
\hline
\multicolumn{7}{|>{\columncolor{mypink}}c|}{ Tetrahedron }&\multicolumn{6}{|>{\columncolor{green!15}}c|}{ Cube }\\
\hline
$n$ & $\|e_{(u,v)} \|_{0}$ & $r=$ & $\|e_{(u,v)}\|_{1}$ & $r=$  &$\|e_{p} \|_{0}$& $r=$& $\|e_{(u,v)} \|_{0}$ & $r=$ & $\|e_{(u,v)}\|_{1}$ & $r=$  &$\|e_{p} \|_{0}$& $r=$ \\
\hline
2 &5.19e-01 &  -     &   6.00e-01&   -     &   1.27e+00 &  -   &   8.97e-03 &  -     &   3.11e-03 &  -     &   2.36e-02 &  -    \\
4 &1.54e-01 &  1.76  &   2.84e-01&   1.08  &   7.49e-01 &  0.76&   5.48e-03 &  0.71  &   2.32e-03 &  0.43  &   1.49e-02 &  0.66\\
8 &4.38e-02 &  1.81  &   1.54e-01&   0.88  &   4.02e-01 &  0.90&   2.11e-03 &  1.38  &   1.34e-03 &  0.79  &   6.39e-03 &  1.22\\
\hline
\multicolumn{7}{|>{\columncolor{yellow!20}}c|}{ Hexagon-based prismal elements }\\
\cline{1-7}
$n$ & $\|e_{(u,v)} \|_{0}$ & $r=$ & $\|e_{(u,v)}\|_{1}$ & $r=$  &$\|e_{p} \|_{0}$& $r=$  \\
\cline{1-7}
2  &    6.66e-01 &  -     &   3.30e-01 &  -     &   1.25e+00 &  -   \\
4  &    3.46e-01 &  0.95  &   2.22e-01 &  0.57  &   7.36e-01 &  0.76\\
8  &    1.36e-01 &  1.34  &   1.03e-01 &  1.11  &   3.44e-01 &  1.10\\
\cline{1-7}
\end{tabular}}
\end{center}
\end{table}

\vfill\eject

\end{document}